\title{
Voros Coefficients for the Hypergeometric Differential Equations 
and Eynard-Orantin's Topological Recursion  \\
{\Large --- Part I : For the Weber Equation ---}
}
\author{
Kohei \textsc{Iwaki}$^{\dagger}$\and
Tatsuya \textsc{Koike}$^{\sharp}$\and
Yumiko \textsc{Takei}$^{\heartsuit}$}
\def\paperinfo{
\renewcommand{\thefootnote}{\fnsymbol{footnote}}
\footnote[0]{$^{\dagger}$Graduate School of Mathematics, 
Nagoya University. 
}
\footnote[0]{$^{\sharp}$Department of Mathematics, 
Graduate School of Science, Kobe University.}
\footnote[0]{$^{\heartsuit}$Department of Mathematics, 
Graduate School of Science, Kobe University. 
}
\footnote[0]{2010 \textit{Mathematics Subject Classification}. 
Primary:34M60; Secondary:81T45}
\footnote[0]{\textit{Keywords}: Exact WKB analysis; Voros coefficients;
Topological recursion; Quantum curves; Free energy; Weber equaiton.}
\renewcommand*{\thefootnote}{\arabic{footnote}}
}
\date{\today}
\theoremstyle{plain}
\newtheorem{thm}{Theorem}[section]
\newtheorem{prop}[thm]{Proposition}
\newtheorem{lem}[thm]{Lemma}
\theoremstyle{definition}
\newtheorem{dfn}[thm]{Definition}
\theoremstyle{remark}
\newtheorem{rem}[thm]{Remark}
\numberwithin{equation}{section}
\numberwithin{table}{section}
\numberwithin{figure}{section}
\newcommand{\thmref}[1]{Theorem \ref{thm:#1}}
\def\Res{\mathop{\rm{Res}}}
\def\Im{{\mathop{\rm{Im}}\nolimits}}
\def\ord{\mathop{\rm{ord}}\nolimits}
\def\Disc{\mathop{\rm{Disc}}\nolimits}
\def\Sing{\mathop{\rm{Sing}}\nolimits}
\begin{document}

\maketitle

\paperinfo

\vspace{-1.em}
\begin{abstract}
We develop the theory of quantization of spectral curves 
via the topological recursion. We formulate a quantization 
scheme of spectral curves which is not necessarily admissible
in the sense of Bouchard and Eynard. 
The main result of this paper and the second part \cite{IKT-part2}
establishes a relation between 
the Voros coefficients for the quantum curves 
and the free energy for spectral curves 
associated with the confluent family 
of Gauss hypergeometric differential equations.
We focus on the Weber equation in this article, 
and generalize the result for the other members of the confluent family
in the second part.  We also find explicit formulas of free energy 
for those spectral curves in terms of the Bernoulli numbers. 
\end{abstract}

\tableofcontents


\section{Introduction} 
\label{section:intro}
The Voros coefficient for a (1-dimensional) Schr\"odinger-type 
linear ordinary differential equation is defined as a contour integral 
of the logarithmic derivative of WKB solutions. 
Its importance in the study of the global behavior of solutions of 
differential equations has been already recognized 
by the earlier work of Voros (\cite{Voros83}). 
For example, the Voros coefficient is an important ingredient 
for describing the Stokes phenomena and the monodromy group 
of a Schr\"odinger equation. 
Moreover, concrete form of the Voros coefficient enables us 
to analyze the parametric Stokes phenomena explicitly. 
The concrete form of the Voros coefficient is now known 
for the Weber equation, the Whittaker equation, 
the Kummer equation and the Gauss hypergeometric equation 
(\cite{SS, Takei08, KoT11, ATT, Aoki-Tanda, AIT}). 
The Voros coefficient also plays an essentially important role 
in a relationship between the exact WKB analysis and cluster algebras (\cite{IN14}).

On the other hand, the topological recursion introduced by 
B.~Eynard and N.~Orantin (\cite{EO}) is a generalization of the loop equations that 
the correlation functions of the matrix model satisfy. 
For a Riemann surface $\Sigma$ and meromorphic functions 
$x$ and $y$ on $\Sigma$, it produces an infinite tower 
of meromorphic differentials $W_{g,n}(z_1, \ldots, z_n)$ on $\Sigma$. 
A triplet $(\Sigma, x, y)$ is called a spectral curve and 
$W_{g,n}(z_1, \ldots, z_n)$ is called a correlation function.
Moreover, for a spectral curve, we can define free energies 
(also called symplectic invariants) $F_g$. 
Topological recursion attracts the interests of both mathematicians 
and physicists since the quantities produced by its framework 
are expected to encode information of various geometric or enumerative invariants. 
It is also known that the topological recursion is closely related 
to integrable systems. For more details see, e.g., the review paper \cite{EO-08}. 

A surprising connection between WKB theory and topological 
recursion was discovered recently. 
The quantization scheme connects WKB solutions with 
the topological recursion (\cite{GS, DM, BE} etc.). 
More precisely, it was found that WKB solutions can be constructed 
by correlation functions for the spectral curve, which corresponds 
to the classical limit of the differential equation, 
when the spectral curve satisfies the ``admissibility condition" 
in the sense of \cite[Definition 2.7]{BE}. 

Then the following question naturally arises: 
\begin{quote}
What quantity corresponds to the Voros coefficient in the topological recursion ?
\end{quote}
In this paper and the second paper \cite{IKT-part2}, 
we answer this question for the confluent family of 
the Gauss hypergeometric differential equations. 
That is, we show that the Voros coefficients are expressed 
as the difference values of the free energy of the spectral curve 
obtained as the classical limit of the family of hypergeometric equations. 
This is our first main result. 

As a model example, in this first paper 
we will consider the Weber equation
\begin{equation} \label{eq:Weber-intro}
\left\{ \hbar^2 \frac{d^2}{dx^2} - 
\left( 
	\frac{x^2}{4} - \lambda + \frac{\hbar \nu}{2}
\right) 
\right\} \psi = 0
\end{equation}
and the corresponding spectral curve 
\begin{equation} \label{eq:Weber-curve-intro}
y^2 - \left( \frac{x^2}{4} - \lambda \right) = 0
\end{equation} 
obtained as the classical limit of \eqref{eq:Weber-intro}.
Here $\lambda$ is a non-zero parameter constant which is related 
to the formal monodromy of \eqref{eq:Weber-intro} around $x=\infty$. 
The Weber equation \eqref{eq:Weber-intro} is the quantum curve 
of the Weber curve \eqref{eq:Weber-curve-intro}, as is shown 
in \cite{BE} (see also \thmref{WKB-Wg,n}).

Let $V(\lambda,\nu;\hbar) = \sum_{m = 1}^{\infty} \hbar^{m} V_m(\lambda,\nu)$ 
be the Voros coefficient of \eqref{eq:Weber-intro}
(see \S \ref{subsec:Voros-coeff} and \S \ref{subsec:Weber-Voros} for precise definition), 
and $F(\lambda;\hbar) = \sum_{g = 0}^{\infty} \hbar^{2g - 2} F_g(\lambda)$ 
be the free energy of the Weber curve 
(see \S \ref{subsec:free-energy} for precise definition). 
Then, one of our main result is formulated as follows:

\begin{thm}[cf. \thmref{weber:main(i)}]
The Voros coefficient of the Weber equation \eqref{eq:Weber-intro} 
and the free energy of the Weber curve \eqref{eq:Weber-curve-intro}
are related as follows:
\begin{equation}
V(\lambda, \nu; \hbar)
= 
F \left({\lambda} - \frac{\hbar \nu}{2} +\frac{\hbar}{2}; \hbar \right)
- F \left({\lambda} - \frac{\hbar \nu}{2} - \frac{\hbar}{2} ; \hbar \right)
- \frac{1}{\hbar} \frac{\partial F_0}{\partial \lambda} (\lambda) 
+ \frac{ \nu }{2} \frac{\partial^2 F_0}{\partial \lambda^2}(\lambda). 
\end{equation}
\end{thm}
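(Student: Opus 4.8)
The plan is to prove the identity by computing the two sides independently as formal power series in $\hbar$ and then matching their coefficients; the conceptual reason such a relation must exist is the quantization theorem \thmref{WKB-Wg,n}, which builds the WKB solutions of \eqref{eq:Weber-intro} out of the correlation functions $W_{g,n}$ of \eqref{eq:Weber-curve-intro}, so that an integral of the WKB data (the Voros coefficient) is bound to be expressible through integrals of the $W_{g,n}$ (the free energy). The technical engine behind the final matching will be a generating-function identity for the Bernoulli numbers.

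First I would compute $V(\lambda,\nu;\hbar)$ directly. Writing a WKB solution of \eqref{eq:Weber-intro} as $\psi=\exp\!\big(\hbar^{-1}\!\int S\,dx\big)$, the logarithmic derivative $S=S(x,\hbar)$ solves the associated Riccati equation, which I would solve recursively in $\hbar$ and split into even and odd parts. The Voros coefficient is the integral of the odd part $S_{\mathrm{odd}}$, with its divergent leading terms removed, along the path joining the turning point to the irregular singular point $x=\infty$. For the Weber equation this integral is elementary by residues, and yields a series in $\hbar$ whose coefficients are polynomials in $\nu$ (in fact values of Bernoulli polynomials) divided by powers of $\lambda$.

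Next I would compute the free energy $F_g(\lambda)$. Because \eqref{eq:Weber-curve-intro} is a rational curve with two simple branch points, the Eynard-Orantin recursion can be run to all orders: the $W_{g,n}$ are explicit rational differentials and the residue defining $F_g$ is computable. The outcome for $g\ge 2$ is a closed form $F_g(\lambda)=c_g\,\lambda^{2-2g}$ with $c_g$ proportional to $B_{2g}/\big(2g(2g-2)\big)$, the cases $g=0,1$ being supplied by their own definitions. The appearance of $B_{2g}$ is exactly what ties this side to the WKB computation above.

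Finally I would expand the right-hand side. Putting $\mu=\lambda-\hbar\nu/2$, the difference $F(\mu+\tfrac{\hbar}{2};\hbar)-F(\mu-\tfrac{\hbar}{2};\hbar)$ is the sum over $g$ of the odd Taylor differences of $F_g$ at $\mu$, and re-expanding $\mu$ around $\lambda$ turns it into a double series in $\hbar$. A short calculation shows that the only contributions at orders $\hbar^{-1}$ and $\hbar^{0}$ come from $g=0$ and equal $\hbar^{-1}F_0'(\lambda)-\tfrac{\nu}{2}F_0''(\lambda)$; thus the two correction terms in the statement are precisely what cancels them, leaving a power series in $\hbar$ of positive order, as required to match $V$. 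What remains is to check that the positive-order coefficients agree, and this reduces to a single Bernoulli-number identity --- morally the reflection of the fact that the all-genus free energy is the asymptotic expansion of a $\log\Gamma$-type function whose finite difference is governed by the same Bernoulli data as $V$. I expect this last step to be the main obstacle: it demands the complete closed form of every $F_g$ (hence a full solution of the recursion on the Weber curve) together with the recognition that the resulting Bernoulli sums coincide with those produced by the exact WKB computation of the Voros coefficient.
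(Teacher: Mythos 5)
Your plan inverts the logical structure that makes this theorem provable, and the two steps you defer are precisely the ones that cannot be carried out as described. The paper's proof is structural and requires no closed form on either side: by \thmref{WKB-Wg,n} the integrand of the Voros coefficient is the $z$-derivative of $\sum_{g,n}\frac{1}{n!}\int_{D(z;\underline{\nu})}\cdots\int_{D(z;\underline{\nu})}W_{g,n}$, so the path integral from $z=0$ to $z=\infty$ collapses to boundary evaluations of the divisor; since $D(\infty;\underline{\nu})=\nu_0([\infty]-[0])$ and $D(0;\underline{\nu})=-\nu_\infty([\infty]-[0])$, one gets
\begin{equation*}
V(\lambda,\nu;\hbar)=\sum_{m\ge 1}\hbar^m\sum_{\substack{2g+n-2=m}}\frac{\nu_0^{\,n}-(-\nu_\infty)^{n}}{n!}\int_0^\infty\cdots\int_0^\infty W_{g,n}.
\end{equation*}
The missing key idea in your proposal is the variational formula (Lemma \ref{lem:Weber_variation}): $\int_0^\infty\cdots\int_0^\infty W_{g,n}=\partial_\lambda^n F_g$, which follows from Theorem \ref{thm:VariationFormula} because $\Omega(z)=-dz/z=\int_{\zeta=0}^{\zeta=\infty}B(z,\zeta)$ for the Weber curve. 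With this, the sum over $n$ is literally the Taylor expansion of the shifted free energies, and the theorem follows after substituting $\nu_0=(1-\nu)/2$, $\nu_\infty=(1+\nu)/2$. You invoke the quantization theorem only as motivation, never as a tool, so you never connect $V$ to $F$ except by comparing explicit formulas.

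That comparison is where your plan breaks down. The closed form $F_g=\frac{B_{2g}}{2g(2g-2)}\lambda^{2-2g}$ cannot be obtained by ``running the recursion to all orders'': the recursion produces each $F_g$ separately, homogeneity (Proposition \ref{prop:weber:homogeneous}) pins down only the power of $\lambda$, and identifying the constants as Bernoulli numbers requires either Harer--Zagier's theorem (a deep external input, whose identification with the Eynard--Orantin free energy of this particular curve itself needs proof) or --- as in the paper --- the very theorem you are trying to prove, since Theorem \ref{cor:weber:FreeEnergy} is deduced from \thmref{weber:main(i)} via the difference equation of Theorem \ref{thm:weber:main(ii)}. The same circularity threatens the other side: the all-order formula for $V$ with general $\nu$ (Bernoulli polynomials evaluated at $(\nu+1)/2$) is Theorem \ref{cor:weber:voros}, again a corollary of the main theorem in the paper; the classical computations you could cite (Voros, Shen--Silverstone, Takei) cover only $\nu=0$ and rely on Gamma-function asymptotics or ladder-operator difference equations, not an ``elementary'' residue evaluation of $\int_\gamma S_m\,dx$. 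So, as written, both explicit inputs to your matching argument are either unproven or presuppose the statement; the route could be salvaged only by importing substantial independent results, which would make it far heavier than the paper's two-step structural argument.
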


The above formula establishes a relation between 
the Voros coefficient and the free energy. 
We will generalize the result for the confluent family 
of the hypergeometric differential equations and 
associated spectral curves in \cite{IKT-part2}. 

To obtain the above result and the main results of \cite{IKT-part2}, 
we also study quantizations without assuming the 
``admissibility condition" of \cite{BE} 
in the case when the degree of a polynomial $P(x,y)$ with respect to $y$ is two. 
This is done in \thmref{WKB-Wg,n}, which is a partial extension of that of \cite{BE}. 
We need this generalization because the spectral curve \eqref{eq:Gauss-curve-intro}
arising from the Gauss hypergeometric differential equation 
(which will be discussed in \cite{IKT-part2} in detail) 
is not admissible. 

As applications of the main results, we get 
three-term difference equations 
which the free energy satisfies (Theorem \ref{thm:weber:main(ii)}). 
By solving them, we obtain concrete forms of the free energy and
the Voros coefficient as well 
(Theorem \ref{cor:weber:FreeEnergy} and Theorem \ref{cor:weber:voros}). 
For example, the explicit form of the $g$-th free energy $F_g$ of the 
Weber curve \eqref{eq:Weber-curve-intro} is given by 
\begin{equation}
F_{g} = \frac{B_{2g}}{2g(2g-2)} \, \frac{1}{\lambda^{2g-2}} 
\quad (g \ge 2),
\end{equation}
which recovers the known result by Harer-Zagier \cite{HZ}
on the computation of the Euler characteristic of 
the moduli space of Riemann surfaces with genus $g$. 
Throughout the paper, $B_{m}$ is the $m$-th Bernoulli number 
(see \eqref{def:Bernoulli} for the definition).

Our method works perfectly for spectral curves 
arising from the confluent family of Gauss hypergeometric equations.
For example, the spectral curve 
\begin{equation} \label{eq:Gauss-curve-intro}
y^2 - \frac{ {\lambda_{\infty}}^2 x^2 - 
({\lambda_{\infty}}^2 + {\lambda_0}^2 - {\lambda_1}^2)x + {\lambda_0}^2 }{x^2 (1 - x)^2} = 0
\end{equation}
of the Gauss hypergeometric equation gives 
\begin{align}
F^{\rm Gauss}_{g}
&= \frac{B_{2g}}{2g(2g-2)}
\left\{
\frac{1}{(\lambda_0 + \lambda_1 + \lambda_{\infty})^{2g-2}}
+ \frac{1}{(\lambda_0 - \lambda_1 + \lambda_{\infty})^{2g-2}}
\right.
\\
&\quad\quad
\left.
+ \frac{1}{(\lambda_0 + \lambda_1 - \lambda_{\infty})^{2g-2}}
+ \frac{1}{(\lambda_0 - \lambda_1 - \lambda_{\infty})^{2g-2}}
- \frac{1}{(2\lambda_0)^{2g-2}}
- \frac{1}{(2\lambda_1)^{2g-2}}
- \frac{1}{(2\lambda_{\infty})^{2g-2}}
\right\} \notag
\end{align}
as the $g$-th free energy for $g \ge 2$ 
(for generic $\lambda_0, \lambda_1, \lambda_\infty$). 
The free energies of other examples which will be considered 
in \cite{IKT-part2} are also expressed in terms of the Bernoulli numbers. 
Consequently, the Voros coefficients of the quantum curves 
are written in terms of the Bernoulli polynomials, 
and these formulas recover the results in 
\cite{SS, Takei08, KoT11, ATT, Aoki-Tanda, AIT}.
We will show these results in our second paper \cite{IKT-part2}, 
based on the results 
(mainly \thmref{WKB-Wg,n} on the quantization of the spectral curve 
given in \S \ref{sec:quantization-spectral-curve}) 
of this paper.

The paper is organized as follows: 
In \S2 we recall some fundamental facts about 
the exact WKB analysis and Eynard-Orantin's topological recursion. 
In \S3 we study quantization of the spectral curve. 
Our main result in this section is \thmref{WKB-Wg,n}, 
which gives quantizations without assuming 
the admissibility condition in the case when 
the spectral curve is described as a polynomial equation 
$P(x,y)=0$ which is degree 2 in $y$. 
In \S4 we state our main theorem for the Weber equation and the Weber curve. 
Moreover, as an application of the results we give concrete 
forms of the free energy and the Voros coefficient. 

\section*{Acknowledgement}
We are grateful to 
Takashi Aoki, 
Takahiro Kawai,
Toshinori Takahashi,
Yoshitsugu Takei 
and 
Mika Tanda
for helpful discussions and communications.
This work is supported, in part, by JSPS KAKENHI Grand Numbers 
16K17613, 16H06337, 16K05177, 17H06127.

\section{Voros coefficients and topolotical recursion}
\label{sec:review}

In this section we briefly recall some basics about
the exact WKB analysis, and Eynard-Orantin's theory.
See \cite{KT98} and \cite{EO} (or \cite{EO-08})
respectively for the details of them.

\subsection{WKB solution}
\label{sec:WKB-and-Voros}

The differential equation which we will discuss in this paper
is the second order ordinary differential equation with a small 
parameter $\hbar \ne 0$ of the form
\begin{equation}
\label{eq:2nd-ODE}
\left\{
\hbar^2 \frac{d^2}{dx^2} + q(x, \hbar) \hbar \frac{d}{dx} + r(x, \hbar)
\right\}\psi = 0,
\end{equation}
where $x \in \mathbb{C}$, and
\begin{equation}
q(x, \hbar) = q_{0}(x) + \hbar q_{1}(x),
\quad
r(x, \hbar) = r_{0}(x) + \hbar r_{1}(x) + \hbar^2 r_2(x),
\end{equation}
with rational functions $q_{j}(x)$ and $r_{j}(x)$
($j = 0, 1, 2$).
We consider \eqref{eq:2nd-ODE} as a differential equations
on the Riemann sphere $\mathbb{P}^1$ with 
regular or irregular singular points.
A WKB type solution of \eqref{eq:2nd-ODE}
is a (formal) solution of \eqref{eq:2nd-ODE} of the form
\begin{equation}
\label{eq:WKB-type}
\psi (x, \hbar)
= \exp \left[
\frac{1}{\hbar} f_{-1}(x)
+ f_0(x) +
\hbar f_1(x) + \cdots\right].
\end{equation}
A typical way of constructing WKB type solutions is the following:
The logarithmic derivative $S(x, \hbar)$ of solutions of \eqref{eq:2nd-ODE}
satisfies the Riccati equation
\begin{equation}
\label{eq:Riccati-gen}
\hbar^2 \left( \frac{d}{dx} S(x, \hbar) + {S(x, \hbar)}^2 \right) 
+ \hbar q(x, \hbar) S(x, \hbar) + r(x, \hbar) = 0.
\end{equation}
Eq. \eqref{eq:Riccati-gen} admits a solution of the form
\begin{equation}
\label{eq:Riccati-gen-expansion}
S(x, \hbar) := \hbar^{-1} S_{-1}(x) + S_0(x) + \hbar S_1(x) + \cdots 
= \sum_{m = -1}^{\infty} \hbar^m S_m(x).
\end{equation}
In fact, by substituting \eqref{eq:Riccati-gen-expansion} into
\eqref{eq:Riccati-gen}, and
equating the both-sides like powers with respect to $\hbar$,
we obtain
\begin{align}
\label{eq:Riccati-gen-1}
S_{-1}^2 + q_{0}(x) S_{-1} + r_{0}(x)  &= 0,\\
\label{eq:Riccati-gen-2}
\big(2 S_{-1} + q_{0} (x)\big) S_0 + q_{1}(x) S_{-1} + r_{1}(x)
+ \frac{d S_{-1}}{dx}  &= 0, \\
\label{eq:Riccati-gen-2-2}
\big(2 S_{-1} + q_{0} (x)\big) S_1 + S_{0}^2 + q_{1}(x) S_{0} + r_{2}(x)
+ \frac{d S_{0}}{dx} 
&= 0,
\end{align}
and
\begin{equation}
\label{eq:Riccati-gen-3}
\big(2 S_{-1} + q_{0}(x) \big) S_{m + 1}
+ \sum_{j = 0}^m S_{m - j} S_{j} + q_{1}(x) S_m + \frac{d S_m}{dx} = 0
\quad (m \geq 1).
\end{equation} 
Eq. \eqref{eq:Riccati-gen-1} has two solutions,
and once we fix one of them,
we can determine $S_m$ for $m \geq 0$ uniquely and recursively
by \eqref{eq:Riccati-gen-2} -- \eqref{eq:Riccati-gen-3}.
Thus we obtain two WKB type solutions of the form
\begin{equation}
\psi(x, \hbar) := \exp \left(\int^x S(x, \hbar) dx\right).
\end{equation}
Here the integral of $S(x,\hbar)$ is defined as the term-wise integral.

Let $\Disc(x)$ be the discriminant of 
\eqref{eq:Riccati-gen-1}, i.e.,
\begin{equation}
\label{eq:discriminantQ}
\Disc(x) := \big\{q_{0}(x)\big\}^2 - 4 r_{0}(x) 
= \big\{y_+(x) - y_-(x)\big\}^2.
\end{equation}
Here
$y_{\pm}(x)$ are two solutions of $y^2 + q_0(x) y + r_0(x) = 0$.
A point $a \in {\mathbb C}$ is called a 
{{turning point}} of \eqref{eq:2nd-ODE} 
if it satisfies $\Disc(a) = 0$. If it is a simple zero, 
we say it is a {{simple}} turning point. 
A simple pole $a \in {\mathbb C}$ of
$\Disc(x)$ is called a {simple-pole type turning point}
(cf.\,\cite{Ko2}).
We also call $\infty \in {\mathbb P}^1$ is a turning point 
(resp., simple-pole type turning point) 
if $X=0$ is a zero (resp., simple-pole) of $\Disc(1/X)  / X^4$ 
(i.e. if $X=0$ is a turning point (resp., simple-pole type turning point) 
of the differential equation 
obtained by the coordinate change $X = 1/x$ from \eqref{eq:2nd-ODE}). 

Together with the turning points, 
Stokes curves, which is defined by
\begin{equation}
\Im \int^x_{a} \sqrt{\Disc(x)} dx = 0,
\end{equation}
where $a$ is a turning point 
or a simple-pole type turning point,
play a central role in the exact WKB analysis.
They are used to describe the region where
Borel summability of WKB type solutions holds, and
to give a connection formulas among the Borel sum of
WKB type solutions (see \cite[Section 2]{KT98}).
Although we do not discuss any such analytic properties
of WKB type solutions in this paper, turning points
and Stokes curves are still useful to visualize
a path of integration to define Voros coefficients.
We will show an example of Stokes curves 
in \S\ref{sec:weber}. 

It is more convenient for the exact WKB analysis
if the second order linear differential equation in question is
represented in the so-called SL-form, i.e.,
the second order linear differential equation with no first order term
is more convenient (the word ``SL'' comes from the fact that
the monodromy matrices of the equation of SL-form belong to
${\rm{SL}}(2, \mathbb{C})$).
A gauge transformation
\begin{equation}
\varphi := \exp \left(\frac{1}{2}\int^x q(x, \hbar) dx\right) \psi
\end{equation}
of the unknown function $\psi$
makes \eqref{eq:2nd-ODE} into the SL-form:
\begin{equation}
\label{eq:2nd-ODE:SL}
\left\{\hbar^2 \frac{d^2}{dx^2}
- Q(x, \hbar) \right\}\varphi = 0,
\end{equation}
where
\begin{align}
\label{eq:SL-potential}
 Q(x, \hbar)
& := \frac{1}{4} q(x, \hbar)^2 - r(x, \hbar)
+ \frac{1}{2}\hbar \frac{\partial}{\partial x} q(x, \hbar)
\\
&= \frac{1}{4} q_0(x)^2 - r_{0}(x)
+ \frac{\hbar}{2} \left( q_{0}(x) q_{1}(x)
- 2 r_{1}(x) +  \frac{d q_{0}}{dx}\right)
+ \frac{\hbar^2}{4}
\left({q_{1}(x)}^2 + 2\frac{d q_{1}}{dx} - 4 r_2(x) \right).\notag
\end{align}
Note that the leading term of the potential $Q(x, \hbar)$ is
$\Disc(x) /4$.

\subsection{Voros coefficient}
\label{subsec:Voros-coeff}

A Voros coefficient is defined as a properly
regularized integral of $S(x, \hbar)$ along 
a path connecting singular points of \eqref{eq:2nd-ODE}.
Here, by a singular point of \eqref{eq:2nd-ODE} we mean 
a pole of ${\rm Disc}(x)$ of order greater than or equal to two.
It depends, however, on the situation we consider
how we regularize such an integral.
Fortunately,
all of the examples discussed in \S\ref{sec:weber} 
and \cite{IKT-part2} 
have the property that $S_m(x)$ with $m \geq 1$
is integrable at any singular point of \eqref{eq:2nd-ODE}. 
In this situation we can define Voros coefficients by
\begin{equation}
V_{\gamma_{b_1, b_2}}(\hbar)
:= \int_{\gamma_{b_1, b_2}}
\big( S(x, \hbar) -\hbar^{-1}S_{-1}(x) - S_0(x)\big) dx
= \sum_{m = 1}^{\infty} \hbar^m  \int_{\gamma_{b_1, b_2}} S_m(x) dx,
\end{equation}
where $\gamma_{b_1, b_2}$ is a path from a singular point $b_1$
to a singular point $b_2$.
Note that Voros coefficients only depend on the class
$[\gamma_{b_1, b_2}]$ of paths in the relative  homology group
$$
H_1 \big(\mathbb{P}^1 \setminus
\{\text{Turning points}\},
\{\text{Singular points}\}; \mathbb{Z} \big).
$$
Such an integration contour (or a relative homology class) 
can be understood as a lift of path on $x$-plane 
onto the Riemann surface of $S_{-1}(x)$ 
(i.e., two sheeted covering of $x$-plane)
after drawing branch cuts and distinguishing 
the first and second sheets of the Riemann surface. 
In \S \ref{sec:weber} we will show an example of such contours, 
and compute the associated Voros coefficient.
Our main example in this paper is the Weber equation \eqref{eq:Weber-intro}, 
and other members of a confluent family 
of the Gauss hypergeometric equation will be 
investigated in the second part \cite{IKT-part2}.

\subsection{Eynard-Orantin's topological recursion}
\label{sec:TR}

A starting point of Eynard-Orantin's theory (\cite{EO})
is a spectral curve.
Because we will not discuss the general case in this paper,
we restrict ourselves to the case when a spectral curve is of genus $0$
(see \cite{EO} for the general definition; see also 
Remark \ref{rem:TR} (ii) below).

\begin{dfn} \label{def:spectral-curve}
A spectral curve (of genus $0$) is a pair $(x(z), y(z))$
of non-constant rational functions on $\mathbb{P}^1$, 
such that their exterior differentials 
$dx$ and $dy$ never vanish simultaneously. 
\end{dfn}

Let $R$ be the set of ramification points of $x(z)$,
i.e., $R$ consists of zeros of $dx(z)$ of any order and poles of $x(z)$
whose orders are greater than or equal to two
(here we consider $x$ as a branched covering map
from $\mathbb{P}^1$ to itself).
We further assume that
\begin{itemize}
\item[(A1)]
A function field $\mathbb{C}(x(z), y(z))$ coincides with $\mathbb{C}(z)$.

\item[(A2)]

If $r$ is a ramification point which is a pole of $x(z)$, 
and if $Y(z) = - x(z)^2 y(z)$ is holomorphic near $r$,
then $dY(r) \neq 0$.

\item[(A3)]
All of the ramification points of $x(z)$ are simple,
i.e., the ramification index of each ramification point
is two.

\item[(A4)]
We assume branch points are all distinct,
where a branch point is defined as the image of
a ramification point by $x(z)$.
\end{itemize}

Because of the assumption (A1), we can find an irreducible
polynomial $P(x, y) \in \mathbb{C}[x, y]$ for which $P(x(z), y(z)) = 0$
holds for any $z$.
Hence $(x(z), y(z))$ becomes a regular meromorphic parametrization
in the sense of \cite[\S4]{SWP} of the plane curve
$\mathcal{C}:= \{(x, y) \in \mathbb{C} \mid P(x, y) = 0\}$.
We also call this curve $\mathcal{C}$ a spectral curve
if there is no fear of confusions.

In the assumption (A2), note that the transformation
$(x, y) \mapsto (X, Y) := (1/x, -x^2 y)$ satisfies
$y dx = Y dX$ (hence it is symplectic),
and that $r$ is a ramification point of $X(z)$.
This assumption ensures that
Eynard-Orantin's correlation function $W_{g, n}(z_1, z_2, \cdots, z_n)$
becomes symmetric in their variables.
(See Theorem \ref{thm:TRprop} below.) 
In this assumption we also allow the case when $y(z)$ has a pole
at a ramification point. 

The assumption (A3) is equivalent to saying
that  any zero of $dx(z)$ is simple, and
that the order of any pole of $x(z)$ is less than or equal to two.
From this assumption,
for each $r \in R$, there exists a neighborhood $U$ of $r$
such that $x^{-1}(x(z)) \cap U$ consists of only
two points for $z \in U \setminus \{r\}$.
Let $x^{-1}(x(z)) \cap U = \{z, \overline{z}\}$.
Then, we define a map
$\overline{\; \cdot \rule{0pt}{.7em}\;}: U \setminus\{r\} \rightarrow \mathbb{P}^1$,
called a conjugate map,
by $z \mapsto \overline{z}$. This conjugate map is
characterized by the following three conditions:
\begin{equation}
\text{(a)}~ \overline{z} \neq z\;\text{if}\; z \neq r,
\qquad
\text{(b)}~ x(\overline{z}) = x(z),
\qquad
\text{(c)}~ \lim_{z \rightarrow r} \overline{z} = z.
\end{equation}
The conjugate map can be extended to $U$
as a holomorphic map: if $r\in\mathbb{C}$ is a zero
of $dx(z)$, and $x(z) =
x_0 + x_2 (z - r)^2 + x_3 (z-r)^3 + \cdots$ with a nonzero $x_2$,
then
\begin{equation}
\label{eq:zbar1}
\overline{z} = r - (z-r) - \frac{x_3}{x_2} (z-r)^2 + \cdots
\end{equation}
near $r$.
If $r\in\mathbb{C}$ is a pole of $x(z)$, and
$x(z) = (z-r)^{-2} \big\{ x_0 + x_1 (z - r) + \cdots\big\}$
with a nonzero $x_0$, then
\begin{equation}
\label{eq:zbar2}
\overline{z} = r - (z-r) + \frac{x_1}{x_0} (z-r)^2 + \cdots.
\end{equation}

The assumption (A4) will be imposed for variational formulas of 
\cite[\S 5]{EO}; see also \S \ref{subsec:variational-formula-TR}.

\begin{rem}
\label{rem:conjugate}
The conjugate map is only defined near a ramification point.
In the subsequent sections of this paper, however, we only study the
case when the degree of $P(x, y)$ with respect to $y$ is two (cf. (AQ1) in
\S \ref{subsec:WKB-series-from-TR}).
In this case the degree of $x(z)$ as a rational function is two,
or, in other words, $x: \mathbb{P}^1 \rightarrow \mathbb{P}^1$ is a 2-sheeted
branched covering, and the conjugate map becomes a globally defined rational map
from $\mathbb{P}^1$ to itself.
\end{rem}

\begin{dfn}[{\cite[Definition 4.2]{EO}}]
Eynard-Orantin's correlation function
$W_{g, n}(z_1, \cdots, z_n)$ for $g \geq 0$ and $n \geq 1$
is defined
as a multidifferential\footnote{We borrow this terminology from \cite{DN16}.
We summarize in \S \ref{sec:multidifferential} our notations
on multidifferentials.} on $(\mathbb{P}^1)^n$ 
using the recurrence relation
(called Eynard-Orantin's topological recursion)
\begin{align}
\label{eq:TR}
W_{g, n+1}(z_0, z_1, \cdots, z_n)
&:= \sum_{r \in R}
\Res_{z = r} K_r(z_0, z)
\Bigg[
W_{g-1, n+2} (z, \overline{z}, z_1, \cdots, z_n)
\\
&\qquad\qquad
+
\sum'_{\substack{g_1 + g_2 = g \\ I_1 \sqcup I_2 = \{1, 2, \cdots, n\}}}
W_{g_1, |I_1| + 1} (z, z_{I_1})
W_{g_2, |I_2| + 1} (\overline{z}, z_{I_2})
\Bigg]
\notag
\end{align}
for $2g + n \geq 2$ with initial conditions
\begin{align}
W_{0, 1}(z_0) &:= y(z_0) dx(z_0),
\quad
W_{0, 2}(z_0, z_1) = B(z_0, z_1)
:= \frac{dz_0 dz_1}{(z_0 - z_1)^2}.
\end{align}
Here we set $W_{g,n} \equiv 0$ for a negative $g$,
\begin{equation}
\label{eq:RecursionKernel}
K_r(z_0, z)
:= \frac{1}{2\big(y(z) - y(\overline{z})\big) dx(z)}
\int^{\zeta = z}_{\zeta = \overline{z}} B(z_0, \zeta)
\end{equation}
is a recursion kernel defined near a ramification point $r \in R$,
$\sqcup$ denotes the disjoint union,
and
the prime ${}'$ on the summation symbol in \eqref{eq:TR}
means that we exclude terms for
$(g_1, I_1) = (0, \emptyset)$
and
$(g_2, I_2) = (0, \emptyset)$
(so that $W_{0, 1}$ does not appear) in the sum.
We have also used the multi-index notation:
for $I = \{i_1, \cdots, i_m\} \subset \{1, 2, \cdots, n\}$
with $i_1 < i_2 < \cdots < i_m$, $z_I:= (z_{i_1}, \cdots, z_{i_m})$.
\end{dfn}

Some remarks are necessary here:
\begin{rem}
\label{rem:TR}
\begin{itemize}

\item[(i)]
Eq. \eqref{eq:TR} becomes a recurrence relation with respect to
$2 g + n$. 

\item[(ii)]
In general setting formulated by \cite{EO}, a spectral curve is defined as a 
triplet $(\Sigma, x, y)$ of compact Riemann surface $\Sigma$ 
together with its Torelli marking, and meromorphic functions $x,y$ on $\Sigma$. 
When $\Sigma$ has genus $\ge 1$, $B(z_0, z_1)$ is replaced 
by the fundamental bilinear differential 
(also called the Bergman kernel in the literature)
of the second kind with respect to the Torelli marking.

\item[(iii)]
Eynard and Orantin defined $R$ as a set of zeros of $dx(z)$ in \cite{EO},
and they call an element of $R$ a branch point.
As far as we know, it is \cite{BE} which
pointed out that poles of $x(z)$ of order two or more
also play the same role as zeros of $dx(z)$ in the
topological recursion.
In this paper we use this modified version.
See also \cite{BHLMR, BE-12}, where higher multiple zero of $dx(z)$ is studied.


\end{itemize}
\end{rem}

\begin{thm}
\label{thm:TRprop}
Under the assumptions (A1) -- (A3), we obtain
\begin{itemize}
\item[{\rm{(i)}}]
$W_{g, n}(z_1, \cdots, z_n)$ is a symmetric meromorphic multidifferential.

\item[{\rm{(ii)}}]
Only singular points
of $W_{g, n}(z_1, \cdots, z_n)$ for $2g + n > 2$
(i.e., $(g, n) \neq (0, 1), (0, 2)$)
with respect to each variable
are ramification points.
They are poles with no residue.
\end{itemize}

\noindent
If we further assume that the degree of $x(z)$ is two, then
\begin{itemize}
\item[{\rm{(iii)}}]
The following relation holds for $2g + n \ge 2$:
\begin{equation}
W_{g, n} (z_1, \cdots, z_n) + W_{g, n} (\overline{z_1}, \cdots, z_n) 
= \delta_{g,0} \delta_{n,2} \frac{dx(z_1)dx(z_2)}{(x(z_1)-x(z_2))^2}.
\end{equation}
\end{itemize}
\end{thm}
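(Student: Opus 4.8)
The plan is to prove (i) and (ii) simultaneously by induction on the quantity $2g+n$ (which, as noted in Remark \ref{rem:TR}(i), decreases along the recursion), and then to deduce (iii) from these facts together with the explicit form of the kernel. The single structural observation I would exploit throughout is that the entire dependence of the right-hand side of \eqref{eq:TR} on the distinguished variable $z_0$ is confined to $K_r(z_0,z)$, and that this dependence enters only through
\[
\int^{\zeta=z}_{\zeta=\overline z} B(z_0,\zeta) = \left( \frac{1}{z_0 - z} - \frac{1}{z_0 - \overline z} \right) dz_0 .
\]

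For meromorphicity in (i) I would simply note that each $W_{g,n}$ is produced from meromorphic data by finitely many residues, so meromorphicity propagates automatically. For (ii), the displayed form of the integral shows that, before applying $\Res_{z=r}$, the $z_0$-singularities sit only at $z_0=z$ and $z_0=\overline z$; since $\Res_{z=r}$ localizes $z$ (and hence $\overline z$) at the ramification point $r$, the resulting differential $W_{g,n}(z_0,\cdots)$ can be singular in $z_0$ only at points of $R$. The absence of residues is then read off from the fact that the two simple poles above carry opposite residues $\pm 1$, which merge at $z_0=r$ and cancel; combined with the inductive hypothesis that the lower correlators in the bracket are residue-free, this propagates the no-residue property. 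The genuinely hard part is the symmetry in (i): the recursion singles out $z_0$, so one must show invariance under exchanging $z_0$ with any $z_i$. Here I would follow the residue-manipulation argument of \cite{EO}, rewriting the double residue and deforming contours. The point that goes beyond \cite{EO}, and which I expect to be the main obstacle, is that our set $R$ also contains poles of $x(z)$ (cf.\ Remark \ref{rem:TR}(iii)); the relevant residue computations must therefore be checked at such ramification points as well, and this is precisely where hypothesis (A2), guaranteeing $dY(r)\neq0$, is needed.

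For (iii) I would treat $(g,n)=(0,2)$ first. Fixing $z_2$ and viewing everything as a $1$-form in $z_1$, the combination $W_{0,2}(z_1,z_2)+W_{0,2}(\overline{z_1},z_2)$ is invariant under $z_1\mapsto\overline{z_1}$, hence descends to a meromorphic $1$-form in the variable $x(z_1)$ on the base $\mathbb{P}^1$, and the right-hand side $dx(z_1)dx(z_2)/(x(z_1)-x(z_2))^2$ is manifestly of this form. A short Taylor expansion shows that both have a double pole at $x(z_1)=x(z_2)$ with identical principal part and no residue, and that neither has any other pole. Their difference is thus a holomorphic $1$-form on $\mathbb{P}^1$, hence zero.

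The remaining cases, necessarily with $2g+n\ge3$, then follow cleanly. The variable conjugated in (iii) is the first argument of $W_{g,n}$, which is exactly the distinguished variable $z_0$ in the instance of \eqref{eq:TR} producing $W_{g,n}$; since the bracket in \eqref{eq:TR} does not involve $z_0$, applying the recursion to both $W_{g,n}(z_1,\cdots)$ and its conjugate gives $\sum_{r}\Res_{z=r}\bigl[K_r(z_0,z)+K_r(\overline{z_0},z)\bigr][\cdots]$. Using the $(0,2)$ identity just proved and the definition \eqref{eq:RecursionKernel},
\[
K_r(z_0,z)+K_r(\overline{z_0},z) = \frac{1}{2\bigl(y(z)-y(\overline z)\bigr)dx(z)}\int^{\zeta=z}_{\zeta=\overline z}\frac{dx(z_0)\,dx(\zeta)}{\bigl(x(z_0)-x(\zeta)\bigr)^2} = 0,
\]
the last equality because the inner integral telescopes to $dx(z_0)\bigl[(x(z_0)-x(z))^{-1}-(x(z_0)-x(\overline z))^{-1}\bigr]$ and $x(z)=x(\overline z)$. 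Hence the symmetrized correlator vanishes, which is exactly (iii) for $(g,n)\neq(0,2)$. Once the symmetry (i) is secured at the $x$-poles in $R$, parts (ii) and (iii) thus reduce to the explicit kernel computations above.
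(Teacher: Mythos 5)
Your proposal is correct and is essentially the argument the paper itself relies on: the paper offers no independent proof but cites Eynard--Orantin's Theorems 4.6, 4.2 and 4.4, whose proofs are exactly the kernel-localization argument for (ii) and the kernel-antisymmetrization computation $K_r(z_0,z)+K_r(\overline{z_0},z)=0$ for (iii) that you reconstruct, plus the remark that admitting higher-order poles of $x(z)$ into $R$ changes nothing essential. Your identification of (A2) as the precise point where the symmetry proof must be re-checked at such ramification points also matches the paper's own comment accompanying assumption (A2).
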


See \cite[Theorem 4.6]{EO}, \cite[Theorem 4.2]{EO} and
\cite[Theorem4.4]{EO} respectively for the proof. 
(Although the original paper \cite{EO} does not include 
higher order poles of $x(z)$ to the set of ramification points, 
the proof can be given in the same way.)

It turns out that, ramification points given as higher order poles of $x(z)$ 
sometimes do not contribute to the topological recursion; 
namely, the correlation functions $W_{g,n}(z_1,\dots,z_n)$ 
may be holomorphic at such ramification points for each variable $z_i$ 
($i=1,\dots,n$) except for $(g,n)=(0,1)$, and the residue at those points 
in \eqref{eq:TR} may become zero. This happens for examples 
which will be considered in the second paper \cite{IKT-part2}. 
Therefore we introduce the following notion.

\begin{dfn} \label{def:effective-ramification}
A ramification point $r$ is said to be ineffective if 
the correlation functions $W_{g,n}(z_1,\dots,z_n)$ 
for $(g,n) \ne (0,1)$ are holomorphic at $z_i = r$ for each $i=1,\dots,n$. 
A ramification point which is not ineffective is called effective.
The set of effective ramification points is denoted by $R^{\ast}$ $(\subset R)$.
\end{dfn}

The following properties of ineffective ramification points are important in this paper. 
\begin{prop} \label{prop:ineffective}
Under the assumptions (A1) -- (A3), we obtain the following:
\begin{itemize}
\item[\rm (i)] 
Let $r$ be a ramification point. Then, $r$ is an ineffective ramification point
if and only if $(y(z) - y(\overline{z})) dx(z)$ has a pole at $r$.
\item[\rm (ii)]
If $r \in R$ is an ineffective ramification point, 
then the residue at $r$ in \eqref{eq:TR} is zero.
\end{itemize}
\end{prop}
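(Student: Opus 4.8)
The plan is to localize the entire question at the ramification point $r$ and read off everything from the factor $(y(z)-y(\overline z))\,dx(z)$ appearing in the denominator of the recursion kernel \eqref{eq:RecursionKernel}. Write $\omega(z):=(y(z)-y(\overline z))\,dx(z)$. Since by \eqref{eq:zbar1}--\eqref{eq:zbar2} the conjugation $z\mapsto\overline z$ is a holomorphic involution fixing $r$ with derivative $-1$ there, I would first choose a local coordinate $w$ centred at $r$ in which it becomes $w\mapsto-w$ (for instance $w=\tfrac12\{(z-r)-(\overline z-r)\}$). In this coordinate $x$ is an even function of $w$, so the coefficient of $dx$ is odd; as $y(w)-y(-w)$ is also odd, the coefficient $\omega_c(w)$ of $\omega=\omega_c(w)\,dw$ is \emph{even}. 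Hence $\mathrm{ord}_r\omega$ is an even integer and, in particular, $1/\omega$ is holomorphic at $r$ (with a zero) \emph{if and only if} $\omega$ has a pole at $r$. Rewriting \eqref{eq:RecursionKernel} as $K_r(z_0,z)=\frac{1}{2\omega(z)}\bigl(\frac{1}{z_0-z}-\frac{1}{z_0-\overline z}\bigr)dz_0$, this dichotomy is exactly what controls the residue in \eqref{eq:TR}.

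For the implication ``$\omega$ has a pole at $r$ $\Rightarrow$ $r$ is ineffective'' I would induct on $2g+n$ (the base case $W_{0,2}=B$ being holomorphic at $r$). The bracket in \eqref{eq:TR} involves only correlation functions $W_{g',n'}$ with $(g',n')\neq(0,1)$, because the prime removes the $W_{0,1}$-terms and $W_{g-1,n+2}$ has $n+2\ge2$; by the inductive hypothesis each is holomorphic at $r$, so the bracket, which is a quadratic differential in $z$, is holomorphic at $z=r$. Multiplying by $K_r$ gives a $1$-form in $z$ whose only possibly singular factor near $z=r$ is $1/\omega$, which now has a \emph{zero}; since $\frac{1}{z_0-z}-\frac{1}{z_0-\overline z}$ is holomorphic in $z$ near $r$ for fixed generic $z_0$, the integrand is holomorphic in $z$ at $r$ and the residue vanishes. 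By \thmref{TRprop}~(ii) the only term of \eqref{eq:TR} that can create a pole of $W_{g,n+1}$ at $z_0=r$ is the one with $r'=r$, so its vanishing forces $W_{g,n+1}$ to be holomorphic at $r$, closing the induction. Read off for an arbitrary ineffective $r$ (where, by the equivalence being proved, $\omega$ has a pole and all the $W$'s in the bracket are holomorphic at $r$ by Definition~\ref{def:effective-ramification}), this same computation is precisely statement (ii).

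For the converse, ``$\omega$ has no pole at $r$ $\Rightarrow$ $r$ is effective,'' the plan is to exhibit a genuine pole of $W_{1,1}$ at $r$. Computing $W_{1,1}(z_0)=\sum_{r'}\Res_{z=r'}K_{r'}(z_0,z)\,B(z,\overline z)$ in the coordinate $w$, I would use $B(z,\overline z)=-\frac{(dw)^2}{4w^2}+(\text{holomorphic})$ together with $\int^{z}_{\overline z}B(z_0,\cdot)=\bigl(\frac{2w}{w_0^2-w^2}+h(w_0,w)\bigr)dz_0$, where $h$ is holomorphic and odd in $w$. The contribution of $r$, to leading order, is $\Res_{w=0}\frac{-1}{4w\,\omega_c(w)(w_0^2-w^2)}\,dw\,dz_0$; since $p:=\mathrm{ord}_r\omega_c$ is even and now $\ge0$, extracting the coefficient of $w^{-1}$ produces a term $\propto w_0^{-(p+2)}\,dz_0$, a genuine pole of order $p+2$ at $z_0=r$. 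The poles coming from the other points $r'\neq r$ cannot cancel it because they sit at $z_0=r'$. A ramification point that is a double pole of $x$ I would reduce to this case by the symplectic change $(x,y)\mapsto(X,Y)=(1/x,-x^2y)$, under which $y\,dx=Y\,dX$ and a direct check gives $\omega=(Y(z)-Y(\overline z))\,dX(z)$ unchanged while $r$ becomes an ordinary zero of $dX$; assumption (A2) guarantees that the transformed ramification point again falls under the standing hypotheses.

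The step I expect to be the main obstacle is the non-vanishing of the residue in this converse direction: I must make sure no accidental cancellation occurs, either from the holomorphic corrections to $B$ and to $\int^{z}_{\overline z}B$ or from the subleading terms of $\omega_c$. Here the parity observation that $\omega_c$ is even (so $p$ is even) is exactly what keeps the leading $w^{-1}$-coefficient alive, and a careful accounting of the $w$- and $w_0$-orders of the correction terms is needed to confirm that none of them feeds into the top-order pole $w_0^{-(p+2)}$. A secondary point needing care is the double-pole reduction: one must verify both the invariance of $\omega$ under $(x,y)\mapsto(1/x,-x^2y)$ and that (A2) really places the transformed ramification point among those already handled.
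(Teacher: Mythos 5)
Your overall strategy --- localizing at $r$ in a coordinate that linearizes the involution, proving that $\ord_r\bigl[(y(z)-y(\overline z))dx(z)\bigr]$ is even, running an induction on $2g+n$ for the implication ``pole $\Rightarrow$ ineffective,'' and exhibiting a pole of $W_{1,1}$ for the converse --- is sound and runs parallel to the paper's proof: your parity lemma plays the role of the paper's Lemma \ref{lem:even-is-not-ramification}, and your $W_{1,1}$ computation replaces the paper's use of $W_{0,3}$ in \eqref{eq:W03-singular}. However, the forward induction as written contains a false step. You claim that the bracket in \eqref{eq:TR} is holomorphic at $z=r$ because, by the inductive hypothesis, every $W_{g',n'}$ with $(g',n')\neq(0,1)$ occurring in it is holomorphic at $r$. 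This fails for the term $W_{g-1,n+2}(z,\overline z,z_1,\dots,z_n)$ in the case $(g,n+1)=(1,1)$: there the bracket is exactly $W_{0,2}(z,\overline z)=B(z,\overline z)$, which has a \emph{double pole} at $z=r$ because its two arguments collide on the diagonal as $z\to r$; holomorphy of $W_{0,2}$ at $z_i=r$ in each variable separately says nothing about this diagonal singularity (your base case ``$W_{0,2}=B$ is holomorphic at $r$'' is imprecise for the same reason). For $2g'+n'>2$ the step is legitimate, since Theorem \ref{thm:TRprop} (ii) excludes diagonal poles, so the problem is confined to $B(z,\overline z)$ --- but that is precisely the term which forces the paper to prove Lemma \ref{lem:even-is-not-ramification} and to treat $W_{1,1}$ separately in \eqref{eq:deform-W11}. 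Your proof of part (ii) inherits the same unjustified claim, since that bracket contains $B(z,\overline z)$ as well.

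The gap is repairable with material you already have, and that is worth making explicit: the bracket has pole order at most two at $z=r$ (only the $B(z,\overline z)$ term is singular), while the numerator $\bigl(\tfrac{1}{z_0-z}-\tfrac{1}{z_0-\overline z}\bigr)$ of $K_r$ in \eqref{eq:RecursionKernel} has a simple zero at $z=r$ and, by your parity observation, a pole of $\omega=(y(z)-y(\overline z))dx(z)$ has order at least two, so $1/\omega$ contributes a zero of order at least two; hence $K_r$ vanishes to order at least three, dominating the double pole, and the integrand is again holomorphic with vanishing residue --- this is exactly the accounting the paper performs in \eqref{eq:deform-W11} and \eqref{eq:deform-Wgn}. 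With that repair your argument goes through, and your converse is in fact \emph{more} robust than the paper's: the paper infers effectiveness from the $1/\omega$-singularity of $W_{0,3}$ in \eqref{eq:W03-singular}, which is inconclusive in the parity-allowed corner case where $\omega$ is holomorphic and non-vanishing at $r$ (order $p=0$), whereas your $W_{1,1}$ computation produces a pole of order $p+2\ge 2$ at $z_0=r$ for every even $p\ge 0$, and your even/odd bookkeeping genuinely rules out the cancellations you flagged as the main obstacle. (Your reduction of double poles of $x$ via $(x,y)\mapsto(1/x,-x^2y)$ is correct but unnecessary: nothing in the local computation in $w$ uses holomorphy of $x$ at $r$.)
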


The proof will be given in \S \ref{sec:miscTR} 
(cf. Propositions \ref{prop:ineffective-ramification} and \ref{prop:ineffective-ramification-2}).
The property (i) above implies that an ineffective ramification point often 
appears as a double pole of $x(z)$.

These properties (Theorem \ref{thm:TRprop} and Proposition \ref{prop:ineffective}) 
of the correlation functions are fundamental,
and we often use them without any reference.

\begin{rem}[cf.\,\cite{BE-12}, {\cite[Remark 3.10]{BE}}]
Correlation functions also satisfy
\eqref{eq:TR}, where $K_r(z_0, z)$ is now
replaced by
\begin{equation}
\label{eq:RecursionKernel-D}
K_{D, r}(z_0, z)
:= \frac{1}{\big(y(z) - y(\overline{z})\big) dx(z)}
\int_{\zeta \in D(z)} B(z_0, \zeta)
\end{equation}
with any divisor $D(z) = [z] - \sum_{j = 1}^m \nu_j [\beta_j]$
with $\beta_j \in \mathbb{P}^1 \setminus R^\ast$ 
and $\nu_j \in \mathbb{C}$ satisfying $\sum_{j = 1}^m \nu_j = 1$.
(See \S\ref{sec:multidifferential} for the definition of the integral with a divisor.) 
\end{rem}

\subsection{Free energy through the topological recursion}
\label{subsec:free-energy}

The $g$-th free energy $F_g$ ($g\geq 0$) is a complex number
defined for the spectral curve,
and one of the most important objects in Eynard-Orantin's theory.
It is also called a symplectic invariant since it is 
``almost'' invariant under symplectic transformations
of spectral curves (see \cite{EO-13} for the details). 

\begin{dfn}[{\cite[Definition 4.3]{EO}}]
For $g \geq 2$, the $g$-th free energy $F_g$ is defined by
\begin{equation}
\label{def:Fg2}
F_g := \frac{1}{2- 2g} \sum_{r \in R} \Res_{z = r}
\big[\Phi(z) W_{g, 1}(z) \big]
\quad (g \geq 2),
\end{equation}
where $\Phi(z)$ is a primitive of $y(z) dx(z)$. 
The free energies $F_0$ and $F_1$ for $g=0$ and $1$ 
are also defined, but in a different manner 
(see \cite[\S 4.2.2 and \S 4.2.3]{EO} for the definition). 
\end{dfn}
Note that the right-hand side of \eqref{def:Fg2} does not
depend on the choice of the primitive
because $W_{g, 1}$ has no residue at each ramification point
(see Theorem \ref{thm:TRprop} (ii)).

In applications (and in our article), the generating series
\begin{equation} \label{eq:total-free-energy}
F := \sum_{g = 0}^{\infty} \hbar^{2g-2} F_g
\end{equation}
of $F_g$'s is crucially important. 
We also call the generating series \eqref{eq:total-free-energy} 
the free energy of the spectral curve. 

\subsection{Variational formulas for the correlation functions}
\label{subsec:variational-formula-TR}

In \S \ref{sec:weber} and \cite{IKT-part2} we will consider a family 
of spectral curves parametrized by complex parameters. 
For our purpose, we briefly recall the variational formulas obtained 
by \cite[\S 5]{EO} which describe the differentiation 
of the correlation functions $W_{g,n}$ and the free energies $F_g$ 
with respect to the parameters.

Suppose that we have given a family 
$(x_\varepsilon(z), y_\varepsilon(z))$
of spectral curves parametrized by a complex parameter 
$\varepsilon$ which lies on a certain domain $U \subset {\mathbb C}$ 
such that  
\begin{itemize}
\item 
$x_\varepsilon(z), y_\varepsilon(z)$ depend 
holomorphically on $\varepsilon \in U$. 
\item 
$x_\varepsilon(z), y_\varepsilon(z)$ 
satisfy the assumptions (A1) -- (A4) 
for any $\varepsilon \in U$. 
\item 
The cardinality of the set $R_\varepsilon$ of 
ramification points of $x_\varepsilon(z)$ is constant on $\varepsilon \in U$
(i.e. ramification points of $x_\varepsilon(z)$ 
are distinct for any $\varepsilon \in U$).
\end{itemize}
Then, the correlation functions $W_{g,n}(z_1, \dots, z_n; \varepsilon)$ 
and the $g$-th free energy $F_g(\varepsilon)$ defined from the spectral curve 
$(x_\varepsilon(z), y_\varepsilon(z))$
are holomorphic in $\varepsilon \in U$ 
as long as $z_i \notin R_\varepsilon$ for any $i=1,\dots,n$. 

In order to formulate a variational formula for correlation functions, 
we need to introduce the notion of ``differentiation with fixed $x$". 
For a meromorphic differential $\omega(z; \varepsilon)$ on ${\mathbb P}^1$, 
which depends on $\varepsilon$ holomorphically, define 
\begin{equation}
\delta_{\varepsilon} \, \omega(z; \varepsilon)  
:= \left( 
\frac{\partial}{\partial \varepsilon} \omega(z_{\varepsilon}(x); \varepsilon) 
\right) \biggl|_{x=x_{\varepsilon}(z)} 
\quad (z \notin R_\varepsilon), 
\end{equation}
where $z_{\varepsilon}(x)$ is (any branch of) the inverse function of 
$x = x_{\varepsilon}(z)$ which is defined away from branchpoints 
(i.e. points in $x_{\varepsilon}(R_\varepsilon)$). 
In \cite{EO} the notation 
$\delta_{\Omega} \, \omega(z; \varepsilon) \big|_{x(z)}$ 
is used for $\delta_{\varepsilon} \omega(z; \varepsilon)$ defined above.
Such differentiation $\delta_{\varepsilon}$ can be generalized 
to multidifferentials in an obvious way. 
Then, under these assumptions, the variational formula is formulated as follows.

\begin{thm}[{\cite[Theorem 5.1]{EO}}] \label{thm:VariationFormula}
In addition to the above conditions, for any $\varepsilon \in U$, 
we further assume that  
\begin{itemize}
\item 
If $r_\varepsilon \in R_\varepsilon$ is a zero of 
$dx_\varepsilon(z)$, then the functions
$\partial x_\varepsilon/ \partial \varepsilon$ and 
$\partial y_\varepsilon/ \partial \varepsilon$ are holomorphic
(as functions of $z$) at $r_\varepsilon$, 
and $dy_\varepsilon(z)$ does not vanish
(as a differential of $z$) at $r_\varepsilon$.
\item 
If $r_\varepsilon \in R_\varepsilon$ is a pole of $x_\varepsilon(z)$ 
with an order greater than or equal to two, then 
\[
\frac{\Omega_\varepsilon(z) \, B(z_1, z) \, B(z_2 , z)}
{dy_\varepsilon(z) dx_\varepsilon(z)}
\]
is holomorphic (as a differential in $z$) at $r(\varepsilon)$, where 
\begin{equation} \label{eq:Omega}
\Omega_\varepsilon(z) := 
\frac{\partial y_\varepsilon}{\partial \varepsilon}(z) \, dx(z)
- \frac{\partial  x_\varepsilon}{\partial \varepsilon}(z) \, dy(z).
\end{equation}
\item 
There exist a path $\gamma$ in $\mathbb{P}^1$ passing through
no ramification point and a function $\Lambda_\varepsilon (z)$ 
holomorphic in a neighborhood of $\gamma$ for which the following holds.
\begin{equation}
\Omega_\varepsilon(z) = 
\int_{\zeta \in \gamma} \Lambda_\varepsilon(\zeta) \, B(z, \zeta).
\end{equation} 
\end{itemize}
Then, $W_{g,n}(z_1, \dots, z_n; \varepsilon)$ 
and $F_g(\varepsilon)$ defined from the spectral curve 
$(x_\varepsilon(z), y_\varepsilon(z))$
satisfy the following relations:  
\begin{itemize}
\item[{\rm{(i)}}]
For $2g + n \geq 2$, 
\begin{equation}
\delta_{\varepsilon} \, W_{g, n} 
(z_1, \cdots, z_n; \varepsilon)
= \int_{\zeta \in \gamma} \Lambda_\varepsilon(\zeta) \,
W_{g, n + 1}(z_1, \cdots, z_n, \zeta; \varepsilon) 
\end{equation}
holds on $\varepsilon \in U$ as long as each of $z_1, \cdots, z_n$ satisfies 
$z_i \notin R_\varepsilon$.

\item[{\rm{(ii)}}]
For $g \geq 1$,
\begin{equation}
\frac{\partial F_g}{\partial \varepsilon}(\varepsilon)
= \int_{\gamma}\Lambda_\varepsilon(z) \, W_{g, 1}(z;\varepsilon)
\end{equation}
holds on $\varepsilon \in U$.


\end{itemize}

\end{thm}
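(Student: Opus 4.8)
The plan is to prove both formulas by induction on $2g+n$, using the topological recursion \eqref{eq:TR} as the inductive engine and treating $\delta_\varepsilon$ as the operation of inserting one extra point integrated against $\Lambda_\varepsilon$ along $\gamma$. First I would settle the two base cases that are \emph{not} produced by the recursion, namely $W_{0,1}$ and $W_{0,2}$. Expressing $W_{0,1}=y_\varepsilon(z)\,dx_\varepsilon(z)$ in the $x$-coordinate and differentiating, a chain-rule computation accounting for the implicit $\varepsilon$-dependence of $z$ through $x=x_\varepsilon(z)$ gives $\delta_\varepsilon W_{0,1}(z)=\frac{\partial y_\varepsilon}{\partial\varepsilon}\,dx-\frac{\partial x_\varepsilon}{\partial\varepsilon}\,dy=\Omega_\varepsilon(z)$, the quantity \eqref{eq:Omega}. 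The third hypothesis then rewrites this as $\delta_\varepsilon W_{0,1}(z)=\int_{\zeta\in\gamma}\Lambda_\varepsilon(\zeta)\,B(z,\zeta)=\int_{\zeta\in\gamma}\Lambda_\varepsilon(\zeta)\,W_{0,2}(z,\zeta)$, which is the desired insertion property one level below \eqref{eq:TR}. For $W_{0,2}=B$ one needs the Rauch variational formula $\delta_\varepsilon B(z_1,z_2)=\int_{\zeta\in\gamma}\Lambda_\varepsilon(\zeta)\,W_{0,3}(z_1,z_2,\zeta)$; on the genus-$0$ curve this is verified by computing the change of $B=dz_1\,dz_2/(z_1-z_2)^2$ induced by the $\varepsilon$-dependent change of coordinate between $z$ and $x$ and matching against the explicit $W_{0,3}$ produced by \eqref{eq:TR}. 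The two holomorphicity hypotheses on $\Omega_\varepsilon$ at the two types of ramification points are exactly what make these residue computations legitimate at both zeros of $dx_\varepsilon$ and double poles of $x_\varepsilon$.

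For the inductive step of (i), assume the formula holds for every $(g',n')$ with $2g'+n'<2g+(n+1)$ and apply $\delta_\varepsilon$ to \eqref{eq:TR}. Writing the bracket in \eqref{eq:TR} as $\mathcal R(z,\overline z,z_1,\dots,z_n)$, the Leibniz rule splits $\delta_\varepsilon W_{g,n+1}$ into a term where $\delta_\varepsilon$ hits $\mathcal R$ and a term where it hits the kernel $K_r$. In the first term the induction hypothesis turns each constituent correlation function into its insertion form, and running \eqref{eq:TR} backwards reassembles $\sum_r\Res_{z=r}K_r\,\delta_\varepsilon\mathcal R$ into part of $\int_{\zeta\in\gamma}\Lambda_\varepsilon(\zeta)\,W_{g,n+2}(z_0,z_1,\dots,z_n,\zeta)$. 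The main obstacle is the second term: one must compute $\delta_\varepsilon K_r$ from the variations of $B$, of $y(z)-y(\overline z)$, and of $dx(z)$, and simultaneously justify that $\delta_\varepsilon$ commutes with $\Res_{z=r}$ even though $\delta_\varepsilon$ holds $x$ fixed while the residue encircles the \emph{moving} ramification point $r_\varepsilon$; this is handled by fixing a small $\varepsilon$-independent contour and differentiating under the integral sign, the hypotheses guaranteeing the integrand stays holomorphic on the contour. A residue manipulation then shows that the kernel variation supplies exactly the missing contribution, in which $\zeta$ is inserted through the kernel, so that the two terms combine into the full $\int_{\zeta\in\gamma}\Lambda_\varepsilon(\zeta)\,W_{g,n+2}(z_0,z_1,\dots,z_n,\zeta)$, completing the induction.

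For part (ii) I would differentiate the definition \eqref{def:Fg2}. Since $F_g$ is a constant, $\partial_\varepsilon F_g=\frac{1}{2-2g}\sum_{r}\Res_{z=r}\big[(\delta_\varepsilon\Phi)\,W_{g,1}+\Phi\,\delta_\varepsilon W_{g,1}\big]$. In the second summand part (i) gives $\delta_\varepsilon W_{g,1}=\int_\gamma\Lambda_\varepsilon W_{g,2}$, and the dilaton equation $\sum_r\Res_{z=r}\Phi(z)\,W_{g,n+1}(z,z_I)=(2-2g-n)\,W_{g,n}(z_I)$, specialized to $n=1$, collapses it to $\frac{1-2g}{2-2g}\int_\gamma\Lambda_\varepsilon W_{g,1}$. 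In the first summand, $\delta_\varepsilon\Phi$ is a primitive of $\Omega_\varepsilon=\int_\gamma\Lambda_\varepsilon B(\cdot,\zeta)$, so writing $\delta_\varepsilon\Phi(z)=\int_\gamma\Lambda_\varepsilon(\zeta)\int^z B(\cdot,\zeta)$ and moving the $z$-contour, using that $W_{g,1}$ has poles only at $R$ with no residues (\thmref{TRprop}(ii)) together with the Cauchy-kernel identity $\sum_r\Res_{z=r}\big(\int^z B(\cdot,\zeta)\big)W_{g,1}(z)=W_{g,1}(\zeta)$, yields $\frac{1}{2-2g}\int_\gamma\Lambda_\varepsilon W_{g,1}$. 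Adding the two pieces gives precisely $\int_\gamma\Lambda_\varepsilon W_{g,1}$; the case $g=1$ is treated identically after replacing \eqref{def:Fg2} by the separate definition of $F_1$. Throughout, the genuine difficulty remains the kernel-variation and residue-commutation in the inductive step of (i); once that is secured, (ii) is a formal consequence of the dilaton and Cauchy-kernel residue identities.
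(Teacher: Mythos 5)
The paper offers no proof of this theorem at all: it defers entirely to \cite[\S 5.1]{EO} (Rauch's variational formula), adding only the remark that the second bullet hypothesis is imposed because, unlike in \cite{EO}, ramification points here include higher-order poles of $x_\varepsilon(z)$. Your sketch is in substance a reconstruction of that cited proof --- the base case $\delta_\varepsilon W_{0,1} = \Omega_\varepsilon$, the Rauch formula for $\delta_\varepsilon B$ matched against $W_{0,3}$, induction through the recursion with a Leibniz split into bracket variation and kernel variation, and the dilaton plus Cauchy-kernel residue identities for part (ii) --- so your approach agrees with the one the paper relies on, and your part (ii) bookkeeping (the weights $\tfrac{1-2g}{2-2g}$ and $\tfrac{1}{2-2g}$ summing to $1$) is correct. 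Two caveats are worth recording. First, the only point at which the statement as formulated here exceeds the literal scope of \cite{EO} is the treatment of residues at ramification points that are double poles of $x_\varepsilon$ --- precisely the reason the second hypothesis was added, as the paper notes --- and that is where your sketch is thinnest: you invoke that hypothesis as what makes the residue computations ``legitimate'' but never carry out the kernel variation or the pole-order estimate at such a point, which is the one piece of content not already covered by the citation. Second, the $g=1$ case of (ii) is not ``treated identically'': there $2-2g=0$, so \eqref{def:Fg2} and your dilaton-equation argument degenerate, and $F_1$ (which the paper defines separately, following \cite[\S 4.2.3]{EO}) requires its own variational computation, as is done in \cite{EO}.
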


See \cite[\S 5.1]{EO} 
(based on the Rauch's variation formula; see \cite{KK} for example) 
for the proof.
We note that, since we modify the definition of the topological recursion 
by adding higher order poles of $x(z)$ as ramification point, 
we also need to require the second condition in the above claim. 

In examples discussed in \S \ref{sec:weber} and \cite{IKT-part2}, 
we will use the variational formula in a situation that
$\Lambda_\varepsilon(z) = \Lambda$ is a constant function 
(which is also independent of the parameter $\varepsilon$). 
In that case, applying the above formulas iteratively, we have 
\begin{equation} \label{eq:iterative-variation}
\frac{\partial^n F_g}{\partial \varepsilon^n}(\varepsilon)
 = \Lambda^n 
\int_{\zeta_1 \in \gamma}  \cdots \int_{\zeta_n \in \gamma}
W_{g, n}(\zeta_1, \cdots, \zeta_n; \varepsilon) .
\end{equation}


\section{Quantization of spectral curves}
\label{sec:quantization-spectral-curve}

\subsection{The WKB-type formal series from the topological recursion}
\label{subsec:WKB-series-from-TR}

With correlation functions $W_{g, n}(z_1, \cdots, z_{n})$
of a spectral curve $(x(z),y(z))$ we associate
a formal series defined by
\begin{align}
\label{eq:wave-z}
\varphi(z;\underline{\nu}, \hbar)
&:= \exp \Bigg[ \frac{1}{\hbar}\int_{\zeta_1 \in D(z; \underline{\nu})} W_{0, 1}(\zeta_1)
	+ \frac{1}{2!} \int_{\zeta_1\in D(z ; \underline{\nu})} 
	\int_{\zeta_2 \in D(z ; \underline{\nu})}
	\left( W_{0, 2}(\zeta_1, \zeta_2) - 
	\frac{dx(\zeta_1) \, dx(\zeta_2)}{(x(\zeta_1) - x(\zeta_2))^2} \right) \\
& \qquad\qquad
\left.
 + \sum_{m = 1}^{\infty} \hbar^m
 \left\{ \sum_{\substack{2g + n - 2 = m \\ g \geq 0, \, n \geq 1}}
 \frac{1}{n!} \int_{\zeta_1 \in D(z ; \underline{\nu})} 
 \cdots \int_{\zeta_n \in D(z ; \underline{\nu})} 
 W_{g, n}(\zeta_1, \cdots, \zeta_n)
 \right\} \right],
\notag
\end{align}
where
\begin{equation}
D(z;\underline{\nu}) = [z] - \sum_{\beta \in B} \nu_\beta [\beta]
\end{equation}
is a divisor on $\mathbb{P}^1$ with a finite set $B \subset \mathbb{P}^1 \setminus R^\ast$
and $\underline{\nu} = (\nu_{\beta})_{\beta \in B}$ which is a tuple of 
complex numbers satisfying $\sum_{\beta \in B}\nu_{\beta} = 1$
(see \S\ref{sec:multidifferential}, for the integral with a divisor). 
Precisely speaking, the above integrals of $W_{0,1}$ and $W_{0,2}$ are not 
well-defined when $\beta \in B$ is a singular point of these differentials. 
In the situation, we define these integrals through a certain regularization technique 
(e.g., method used in Remark \ref{rem:regularization}). 
We do not consider the regularization in detail because 
only their $z$-derivatives are important in the following discussion
(in particular, in the proof of Theorem \ref{thm:WKB-Wg,n}).

Note that the quantization by using the divisor with the parameter 
$\underline{\nu}$ was first  introduced by \cite{BE}. 
This $\varphi(z;\underline{\nu}, \hbar)$ has the same form as
WKB type solutions \eqref{eq:WKB-type}, and
it is known that $\psi(x; \underline{\nu}, \hbar) = \varphi(z(x); \underline{\nu}, \hbar)$
for some class of spectral curves
with some specified divisors $D(z, \underline{\nu})$
satisfies a linear ordinary differential equation,
where $z(x)$ is a inverse function of $x(z)$.
Such a linear ordinary differential equation
is called a quantum curve of a spectral curve in question.

A typical example of a quantization is the Airy curve
\begin{equation} 
x(z) = z^2, \quad 
y(z) = z \qquad (z \in \mathbb{P}^1)
\end{equation}
(so that $P(x(z), y(z)) = 0$ with $P(x, y)= y^2 - x$).
In this case we choose $D = [z] - [\infty]$, and
the corresponding quantum curve is the Airy equation
\begin{equation}
\left\{ \hbar^2 \frac{d^2}{dx^2} - x \right\} \psi = 0
\end{equation}
(\cite{GS, Zhou12}, see also \cite{EO}, \cite{EO-08}).
Another example is
\begin{equation} 
x(z) = z^2, \quad 
y(z) = 1/z \qquad (z \in \mathbb{P}^1)
\end{equation}
called Bessel curve in \cite{DN16}. 
In this case $P(x(z), y(z)) = 0$ with $P(x, y)= x y^2 - 1$,
and the corresponding quantum curve is obtained in \cite{DN16}:
\begin{equation}
\left\{ \hbar^2 \frac{d^2}{dx^2} - \frac{1}{x} \right\} \psi = 0.
\end{equation}

A systematic study of a quantization is done in \cite{BE},
and explicit forms of quantum curves are given 
for spectral curves satisfying the admissibility assumption 
(\cite[Definition 2.7]{BE}).
The spectral curve \eqref{eq:Gauss-curve-intro}
of the Gauss hypergeometric equation in the SL-form 
(which we will discuss in \cite{IKT-part2}), 
however, does not satisfy the admissibility condition.
In this section we study quantizations without the admissibility condition, 
while we restrict ourselves to the case when the degree of
a polynomial $P(x, y)$ with respect to $y$ is two.

To be more precise,
we assume the following conditions
in addition to (A1) -- (A4):

\begin{description}
\item[{\rm(AQ1)}]
The rational functions $(x(z), y(z))$ satisfy
$P(x(z), y(z)) = 0$ with an irreducible polynomial
$P(x, y) = p_0(x) y^2 + p_1(x) y + p_2(x) \in \mathbb{C}[x, y]$,
where $p_0(x)$ is a nonzero polynomial.

\item[{\rm{(AQ2)}}]
The differential $(y(z) - y(\bar{z})) dx(z)$ does not vanish on ${\mathbb P}^1 \setminus R$.
\end{description}

By the assumption (AQ1), the conjugate map is now a rational map
defined globally (cf. Remark \ref{rem:conjugate}).
Hence the assumption (AQ2) makes sense.
We also note that $y(z)$ and $y(\overline{z})$ are 
two solutions of $P(x(z), y) = 0$. 
These solutions are distinct when $z \notin R$.



\begin{rem}
The assumption (AQ2) ensures that
two solutions $y(z)$ and $y(\overline{z})$ of $P(x(z), y) = 0$
merge to each other only at ramification points.
An example of a spectral curve which does not satify
(AQ2) is
\begin{equation}
P(x, y) = y^2 - x^3 - x^2,
\quad
x(z) = - 1 + z^2,
\quad
y(z) = z (z^2 - 1).
\end{equation}
Because $R = \{0, \infty\}$ and $\overline{z} = -z$,
we have $y(1) = y(\overline{1}) = 0$,
whereas $z = 1$ is not a ramification point of $x(z)$.
Thus (AQ2) is violated.
At  $(x(z), y(z))|_{z = 1} = (0, 0)$, a curve defined by $\{(x, y) \mid P(x, y) = 0\}$ has
a normal crossing. Hence the point $x(1) = 0$ should become a
double turning point of its quantum curve (if it exists).
As illustrated by this example, (AQ2) excludes multiple turning points.
A quantum curve with a double turning point is discussed in \cite{IS}, 
where the infinitely many $\hbar$-correction terms appear 
in the quantum curve. It was also shown in \cite{IS} that 
the correction terms are related to asymptotic expansion of Painlev\'e transcendents.
\end{rem}

\subsection{An index of the defining polynomials}

To state assumptions for the divisor $D(z; \underline{\nu})$ in \eqref{eq:wave-z},
we introduce an index at $x_0 \in \mathbb{P}^1$ by
\begin{equation}
\rho(x_0; P) :=
\begin{cases}
 \ord_{x_0} Q_0(x) & (x_0 \neq \infty),\\
 \ord_{0} Q_{0}^{(\infty)}(x) & (x_0 = \infty)
\end{cases}
\end{equation}
for a polynomial $P(x, y) = p_0(x) y^2 + p_1(x) y + p_2(x) \in {\mathbb C}[x, y]$, where
\begin{equation} \label{eq:Q0-in-section-3}
Q_0(x) := \frac{1}{4}\left(\frac{p_1(x)}{p_0(x)}\right)^2 -  \frac{p_2(x)}{p_0(x)}
\quad\text{and}\quad
Q_0^{(\infty)}(x) := \frac{1}{x^4} Q_0({1}/{x}).
\end{equation}
Here $\ord_{x_0} g(x)$ for a
rational function $g(x)$ is defined as an integer $p$ for which
\begin{equation}
g(x) = (x - x_0)^{p} \big(c_0 + c_1 (x - x_0) + \cdots \big), \quad c_0
 \neq 0
\end{equation}
holds as the Laurent expansion of $g(x)$ at $x_0$.

\begin{rem}
\begin{itemize}
\item[(i)]
This index $\rho(x_0; P)$ is related to the type of singularities
of the quantum curve.
We have expected that the leading term with respect to $\hbar$
of the quantum curve is given by the second order differential
operator $\mathscr{L} := (p_0(x))^{-1} P(x, \hbar (d/dx))$,
where we use
the normal ordering with respect to $x$ and $\hbar (d/dx)$.
Then  $Q_0(x)$ is the leading term with respect to $\hbar$ of the
potential of the SL-form of $\mathscr{L}$
(cf. \eqref{eq:2nd-ODE:SL}).
It is well-known that
\begin{itemize}
\item[(a)]
$x_0 \in \mathbb{P}^1$  is a regular singular point of
the SL-form of $\mathscr{L}$
iff $\rho(x_0; P) = -1, -2$. \\
($x_0$ with $\rho(x_0; P) = -1$ is a simple-pole type turning point
in the WKB analysis, we we have mentioned in \S \ref{sec:WKB-and-Voros}.)

\item[(b)]
$x_0 \in \mathbb{P}^1$  is an irregular singular point of
the SL-form of $\mathscr{L}$
iff $\rho(x_0; P) \leq -3$.
\end{itemize}

\item[(ii)]
Our index $\rho(x_0; P)$ remains invariant under a symplectic
transformation of the form
$(x, y) \mapsto (X, Y) = (x, y + g(x))$
for some function $g(x)$, or, a gauge transformation
$\mathscr{L} \mapsto e^{- \int g(x) dx} \circ \mathscr{L} \circ   e^{\int g(x) dx}$.
\end{itemize}
\end{rem}

\begin{prop}
\label{prop:order-of-ydx}
We assume (A1) -- (A4), (AQ1) and (AQ2).
Then, for $\alpha \in \mathbb{P}^1$,
\begin{equation}
\ord_{\alpha} \left[(y(z) - y(\overline{z})) \frac{dx}{dz}(z)\right] =
\begin{cases}
\dfrac{1}{2}\rho(x(\alpha); P) & (\alpha \not\in R),\\[+.5em]
\rho(x(\alpha); P) + 1 & (\alpha \in R).
\end{cases}
\end{equation}
\end{prop}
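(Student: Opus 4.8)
The plan is to reduce the entire statement to the single algebraic identity
\begin{equation}
(y(z) - y(\overline{z}))^2 = 4\, Q_0(x(z))
\end{equation}
and then to read off orders in a local coordinate $t = z - \alpha$. To establish the identity, recall that under (AQ1) the two functions $y(z)$ and $y(\overline{z})$ are exactly the two roots of the quadratic $P(x(z), y) = p_0(x(z))y^2 + p_1(x(z))y + p_2(x(z)) = 0$ in $y$ (they are distinct off $R$, and the conjugate is a global rational map by \remref{conjugate}). Vieta's formulas give $y(z) + y(\overline{z}) = -p_1/p_0$ and $y(z)y(\overline{z}) = p_2/p_0$ (both evaluated at $x(z)$), so $(y(z)-y(\overline{z}))^2 = (p_1/p_0)^2 - 4p_2/p_0 = 4\,Q_0(x(z))$ by the definition \eqref{eq:Q0-in-section-3}. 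Since $y(z)-y(\overline{z})$ is a genuine rational function on $\mathbb{P}^1$, taking orders yields $\ord_{\alpha}(y(z)-y(\overline{z})) = \tfrac{1}{2}\,\ord_{\alpha}[Q_0(x(z))]$ at every $\alpha$, so the problem becomes the bookkeeping of $\ord_{\alpha}[Q_0(x(z))]$ together with $\ord_{\alpha}(dx/dz)$.

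First I would treat the points with $x(\alpha) \neq \infty$. Writing $x_0 = x(\alpha)$ and $\rho = \rho(x_0; P) = \ord_{x_0} Q_0$, we have $Q_0(x) = (x - x_0)^{\rho}\,u(x)$ with $u$ holomorphic and nonvanishing at $x_0$. If $\alpha \notin R$, then $x$ is locally biholomorphic, so $x(z) - x_0$ has a simple zero and $dx/dz$ is a nonzero finite value at $\alpha$; hence $\ord_{\alpha}[Q_0(x(z))] = \rho$ and $\ord_{\alpha}(dx/dz) = 0$, giving $\tfrac{1}{2}\rho$. If instead $\alpha \in R$, then by (A3) it is a simple zero of $dx$, so $x(z) - x_0 = x_2(z-\alpha)^2 + \cdots$ with $x_2 \neq 0$ (cf. \eqref{eq:zbar1}); thus $\ord_{\alpha}[Q_0(x(z))] = 2\rho$ and $\ord_{\alpha}(dx/dz) = 1$, giving $\rho + 1$. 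Both agree with the claim.

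The remaining cases have $x(\alpha) = \infty$, and here the bookkeeping is channelled through $Q_0^{(\infty)}(x) = x^{-4}Q_0(1/x)$. Setting $\rho = \rho(\infty; P) = \ord_{0} Q_0^{(\infty)}$, this definition is equivalent to $Q_0(x) = C\,x^{-4-\rho}(1 + o(1))$ as $x \to \infty$ with $C \neq 0$. If $\alpha \notin R$, then $\alpha$ is at worst a simple pole of $x$ (an order-$\geq 2$ pole would lie in $R$), so $x(z) \sim c(z-\alpha)^{-1}$, whence $\ord_{\alpha}[Q_0(x(z))] = 4 + \rho$ and $\ord_{\alpha}(dx/dz) = -2$; the contributions $\tfrac{1}{2}(4+\rho)$ and $-2$ sum to $\tfrac{1}{2}\rho$. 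If $\alpha \in R$, then by (A3) it is exactly a double pole, $x(z) \sim x_0(z-\alpha)^{-2}$ with $x_0 \neq 0$ (cf. \eqref{eq:zbar2}), so $\ord_{\alpha}[Q_0(x(z))] = 8 + 2\rho$ and $\ord_{\alpha}(dx/dz) = -3$, and $\tfrac{1}{2}(8+2\rho) - 3 = \rho + 1$. Again both match.

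I expect the only genuine care to be needed at $x(\alpha) = \infty$: one must translate the definition of $\rho(\infty; P)$ through the factor $x^{-4}$ correctly and then cancel that against the negative order of $dx/dz$ produced by the pole of $x$. The finite-point cases and the identity itself are essentially immediate, so the substance of the proof is the uniform tracking of these orders across the four cases, together with the observation that in the two non-ramification cases the resulting half-integer is automatically an integer (equivalently, $\rho(x(\alpha);P)$ is even there), which is forced by the single-valuedness of $y(z)-y(\overline{z})$.
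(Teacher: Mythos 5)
Your proposal is correct, and its backbone coincides with the paper's proof: the identity $(y(z)-y(\overline{z}))^2 = 4\,Q_0(x(z))$ obtained from Vieta's formulas is exactly the paper's \eqref{eq:y-vs-ybar}, and your four cases (finite/infinite value of $x(\alpha)$, ramified/unramified $\alpha$) are the paper's cases (a)--(d). The genuine difference is in the two cases lying over $x=\infty$. The paper handles them by the symplectic transformation $(X,Y)=(1/x,-x^2y)$ of \eqref{eq:symplectic-transform}: since $(y(z)-y(\overline{z}))\,dx(z)=(Y(z)-Y(\overline{z}))\,dX(z)$ and $(Y(z)-Y(\overline{z}))^2=4\,Q_0^{(\infty)}(X(z))$, the pole cases reduce verbatim to the finite-point computation, with $\alpha$ now a regular point (resp.\ a simple zero of $dX$) of $X(z)$. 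You instead unwind the definition $\rho(\infty;P)=\ord_0 Q_0^{(\infty)}$ into the asymptotics $Q_0(x)=C\,x^{-4-\rho}\,(1+o(1))$ as $x\to\infty$ and cancel $\ord_{\alpha}\left[Q_0(x(z))\right]$ (equal to $4+\rho$ at a simple pole, $8+2\rho$ at a double pole) against $\ord_{\alpha}(dx/dz)$ (equal to $-2$, resp.\ $-3$); the arithmetic checks out in both cases, and your appeal to the definition of $R$ and to (A3) to pin down the pole orders is exactly what is needed. Your route is more elementary and makes transparent that the factor $x^{-4}$ in \eqref{eq:Q0-in-section-3} exists precisely to offset the negative order of $dx/dz$ at poles of $x(z)$; the paper's route is more structural, showing that $Q_0^{(\infty)}$ \emph{is} the discriminant in the symplectic coordinate at infinity, so the finite-point argument applies without any separate bookkeeping (the same transformation also underlies assumption (A2)). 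Your closing observation --- that rationality of $y(z)-y(\overline{z})$ forces $\rho(x(\alpha);P)$ to be even at unramified $\alpha$, so the half-integer in the statement is an integer --- is correct and is left implicit in the paper.
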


\begin{proof}
Because $y(z)$ and $y(\overline{z})$ are two solutions of $P(x(z), y) = 0$, we have
\begin{equation}
\label{eq:y-vs-ybar}
\big\{y(z) - y(\overline{z})\big\}^2
=
\left.
\left\{
\left(\frac{p_1(x)}{p_0(x)}\right)^2 - 
4 \, \frac{p_2(x)}{p_0(x)}\right\}\right|_{x = x(z)}
= 4 Q_0(x(z)) .
\end{equation}
Hence if $\alpha$ is a regular point of $x(z)$, we have
\begin{equation}
\label{eq:order-result1}
\ord_{\alpha} \big[ y(z) - y(\overline{z})\big]
= \frac{1}{2} \ord_{x(\alpha)} Q_0(x)
= \frac{1}{2} \rho(x(\alpha); P).
\end{equation}
If $\alpha$ is a pole of $x(z)$, we utilize a transformation
\begin{equation}
\label{eq:symplectic-transform}
X(z) = \frac{1}{x(z)}, \quad Y(z) = -x(z)^2 y(z).
\end{equation}
By this transformation, we have
\begin{equation}
\label{eq:symplectic-transform2}
( y(z) - y(\overline{z}) ) dx(z)
=
( Y(z) - Y(\overline{z}) ) dX(z).
\end{equation}
Furthermore
$Y(z)$ and $Y(\overline{z})$ are two solutions of
$P^{(\infty)} (X, Y) = 0$, where
\begin{equation}
P^{(\infty)} (X, Y) = P(1/X, -X^2 Y)
= X^4 p_0(1/X) Y^2 - X^2 p_1(1/X) Y + p_2(1/X).
\end{equation}
Hence
\begin{equation}
\big\{Y(z) - Y(\overline{z})\big\}^2
=
\left.
\frac{1}{X^4}
\left\{
\left(\frac{p_1(1/X)}{p_0(1/X)}\right)^2 - 4
\frac{p_2(1/X)}{p_0(1/X)}\right\}\right|_{X = X(z)}
= 4 Q_0^{(\infty)}(X(z)) .
\end{equation}
Thus
\begin{equation}
\label{eq:order-result2}
\ord_{\alpha} \big[ Y(z) - Y(\overline{z})\big]
= \frac{1}{2} \ord_{0} Q_0^{(\infty)}(x)
= \frac{1}{2} \rho(x(\alpha); P).
\end{equation}

\noindent
Now we divide the proof into four cases.
\begin{itemize}
\item[(a)]
$\alpha \not\in R$ and $\alpha$ is a regular point of $x(z)$:
In this case $\ord_{\alpha} [dx/dz] = 0$ follows, and hence
\begin{equation}
\ord_{\alpha} \left[(y(z) - y(\overline{z})) \frac{dx}{dz}(z)\right]
=
\ord_{\alpha} \left[ y(z) - y(\overline{z})\right]
\overset{\eqref{eq:order-result1}}{=} \frac{1}{2} \rho(x(\alpha); P).
\end{equation}

\item[(b)]
$\alpha \not\in R$ and $\alpha$ is a simple pole of $x(z)$:
Since $\alpha$ is a regular point of $X(z)$ with $X(\alpha) = 0$ and $X'(\alpha) \neq 0$,
we obtain
\begin{eqnarray}
\label{eq:order-relation:tmp1}
\ord_{\alpha} \left[(y(z) - y(\overline{z})) \frac{dx}{dz}(z)\right]
&\overset{\eqref{eq:symplectic-transform2}}{=}&
\ord_{\alpha} \left[(Y(z) - Y(\overline{z})) \frac{dX}{dz}(z)\right]\\
&=&
\ord_{\alpha} \big[ Y(z) - Y(\overline{z})\big]\notag\\
&\overset{\eqref{eq:order-result2}}{=}&
\frac{1}{2} \rho(x(\alpha); P).
\notag
\end{eqnarray}

\item[(c)]
$\alpha \in R$ and $\alpha$ is a simple zero of $dx(z)$:
In this case
\begin{align}
\ord_{\alpha} \big[x(z) - x(\alpha)\big] = 2
\quad\text{and}\quad
\ord_{\alpha} \left[\frac{dx}{dz}(z)\right] = 1
\end{align}
hold.
Hence
\begin{align}
\ord_{\alpha} \big[\big(y(z) - y(\overline{z})\big) \frac{dx}{dz}(z)\big]
&= \frac{1}{2}
\rho(x(\alpha); P) \times \big( \ord_{\alpha} [x(z) - x(\alpha)]\big)
+ 1\notag\\
&= \rho(x(\alpha); P) + 1.
\notag
\end{align}

\item[(d)]
$\alpha \in R$ and $\alpha$ is a double pole of $x(z)$:
Since
\begin{align}
\ord_{\alpha} \big[X(z) - X(\alpha)\big] = 2
\quad\text{and}\quad
\ord_{\alpha} \left[\frac{dX}{dz}(z)\right] = 1,
\end{align}
we have
\begin{align}
\ord_{\alpha} \left[\big(y(z) - y(\overline{z})\big) \frac{dx}{dz}(z)\right]
&=
\ord_{\alpha} \left[\big(Y(z) - Y(\overline{z})\big) \frac{dX}{dz}(z)\right]\\
&= \frac{1}{2}
\rho(x(\alpha); P) \times \big( \ord_{\alpha} [X(z) - X(\alpha)]\big)+ 1\notag\\
&= \rho(x(\alpha); P) + 1.
\notag
\end{align}
\end{itemize}
\end{proof}

\begin{rem}
\label{rem:ineffective}
It follows from Proposition \ref{prop:order-of-ydx} that,
if $r \in R$ satisfies $\rho (x(r); P) \leq -2$, then
$r$ is an ineffective ramification point
(cf. Proposition \ref{prop:ineffective}).
\end{rem}


\subsection{Quantum curves}
\label{subsec:main}

In order to give our theorem in a clear form, we introduce
\begin{equation}
\Sing (P) :=\big\{b \in \mathbb{P}^1 \mid \rho (b; P) \leq -2\}, 
\qquad
\Sing_2 (P) :=\big\{b \in \mathbb{P}^1 \mid \rho (b; P) = -2\}
\end{equation}
for a polynomial $P(x, y) = p_0(x) y^2 + p_1(x) y + p_2(x) \in {\mathbb C}[x, y]$.
We also use the following symbols:
\begin{align}
\Delta(z) &:= y(z) - y(\overline{z}), \\
C_{\beta} &:= \Res_{z = \beta} \Delta(z) dx(z).
\end{align}

\begin{thm}
\label{thm:WKB-Wg,n}
We assume (A1) -- (A4), (AQ1) and (AQ2).
Let
\begin{equation} \label{eq:divisor-theorem}
D(z; \underline{\nu}) := [z] - \sum_{\beta \in B}\nu_{\beta} [\beta]
\end{equation}
be a divisor on $\mathbb{P}^1$, where
$B := x^{-1}(\Sing (P))$,
and $\underline{\nu} =(\nu_{\beta})_{\beta \in B}$ is a tuple of complex numbers satisfying
\begin{equation}
\sum_{\beta \in B} \nu_{\beta} = 1.
\end{equation}
Then, 
\begin{equation}
\psi(x;\underline{\nu}, \hbar) := \varphi (z(x); \underline{\nu}, \hbar), 
\end{equation}
where
$z(x)$ denotes an inverse function of $x(z)$ and
$\varphi(z, \hbar)$ is defined by \eqref{eq:wave-z}
with the integration divisor \eqref{eq:divisor-theorem},
is a WKB type formal solution of $\hat{P} \psi = 0$,
where $\hat{P}$ is defined by
\begin{equation}
\label{eq:quantization}
\hat{P} :=
\hbar^2\frac{d^2}{dx^2} + q(x, \hbar) \hbar \frac{d}{dx} + r(x, \hbar)
\end{equation}
with
\begin{equation} \label{eq:leading-coeff-of-quantum-curve}
q(x, \hbar) = q_0(x) + \hbar q_1(x),
\quad
r(x, \hbar) = r_0(x) + \hbar r_1(x) + \hbar^2 r_2(x)
\end{equation}
whose leading terms are respectively given by
\begin{equation}
q_0(x) = \frac{p_1(x)}{p_0(x)},
\quad
r_0(x) = \frac{p_2(x)}{p_0(x)}
\end{equation}
and their lower order terms are determined by
\begin{align}
\label{eq:WKB-Wg,n:q1}
x'(z) q_1(x(z))
&=
- \frac{\Delta'(z)}{\Delta(z)}
+ \frac{2}{z - \overline{z}}
- \sum_{\beta \in B}
\frac{\nu_{\beta} + \nu_{\overline{\beta}}}{z - \beta},\\
\label{eq:WKB-Wg,n:r1}
x'(z) r_1(x(z))
&= \frac{1}{2} x'(z) \frac{\partial q_0}{\partial x}\Big|_{x = x(z)}
+ \frac{1}{2} x'(z) q_0(x(z)) q_1(x(z))
+ \frac{1}{2} \Delta(z)
\sum_{\beta \in B}
\frac{\nu_{\beta} - \nu_{\overline{\beta}}}{z - \beta},
\\
\label{eq:WKB-Wg,n:r2}
x'(z) r_2(x(z))
&= \Delta(z) \sum_{\beta \in B_1}
 \frac{\nu_{\beta}\nu_{\overline{\beta}}}{C_{\beta}} \frac{1}{z - \beta}.
\end{align}
Here we set $B_1 := x^{-1}(\Sing_2(P))$.
\end{thm}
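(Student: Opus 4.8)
The plan is to reformulate the assertion ``$\hat P\psi=0$'' as the statement that the logarithmic derivative $S(x,\hbar):=\psi'/\psi$ satisfies the Riccati equation \eqref{eq:Riccati-gen} with the prescribed $q,r$, and then to verify this by producing the two WKB branches explicitly via the conjugation $z\mapsto\overline z$. First I would compute $S$. Since $\psi(x;\underline\nu,\hbar)=\varphi(z(x);\underline\nu,\hbar)$, we have $S=x'(z)^{-1}\,d_{z}\log\varphi$, and because the $z$-dependence of the integrals in \eqref{eq:wave-z} enters only through the moving endpoint $[z]$ of $D(z;\underline\nu)$, differentiation in $z$ simply frees one slot of each $W_{g,n}$; using the symmetry of $W_{g,n}$ (Theorem \ref{thm:TRprop}(i)) to absorb the factorials one gets the one-point specialization
\begin{align*}
S(x,\hbar)\,dx(z) &= \frac{1}{\hbar}\,W_{0,1}(z)
+\int_{\zeta\in D(z)}\!\Big(W_{0,2}(z,\zeta)-\frac{dx(z)\,dx(\zeta)}{(x(z)-x(\zeta))^{2}}\Big)\\
&\quad+\sum_{m\ge1}\hbar^{m}\sum_{2g+n-2=m}\frac{1}{(n-1)!}\int_{D(z)^{\,n-1}}\!W_{g,n}(z,\,\cdot\,),
\end{align*}
whose leading coefficient is $S_{-1}=y(z)$. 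This already solves \eqref{eq:Riccati-gen-1}, since $y(z)$ is a root of $P(x(z),y)=0$ and $q_0=p_1/p_0$, $r_0=p_2/p_0$.

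The key idea is that the conjugate point supplies the second branch: $\widetilde\psi:=\varphi(\overline z;\underline\nu,\hbar)$ is a formal solution of the \emph{same} equation, built with the same fixed part $-\sum_\beta\nu_\beta[\beta]$ of the divisor but moving endpoint $[\overline z]$, and its logarithmic derivative $\widetilde S$ has leading term $y(\overline z)$. As $y(z)$ and $y(\overline z)$ are the two roots of $P(x,\,\cdot\,)=0$, the pair $\psi,\widetilde\psi$ must span the solution space. I would therefore \emph{define} the monic second-order operator annihilating both by
\begin{equation*}
W:=\psi\,\widetilde\psi\,(\widetilde S-S),
\qquad
\frac{q}{\hbar}=-\frac{d}{dx}\log W
=-(S+\widetilde S)-\frac{d}{dx}\log(\widetilde S-S),
\end{equation*}
$W$ being the Wronskian, and read off $r$ from the requirement that $\psi$ solve $\hat P\psi=0$; that $\widetilde\psi$ then solves it too follows automatically by reduction of order, because $\widetilde\psi/\psi=\int W/\psi^{2}$. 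In this formulation the theorem becomes the twin assertions that (a) $q$ truncates at order $\hbar^{1}$ and equals $q_0+\hbar q_1$ with $q_1$ given by \eqref{eq:WKB-Wg,n:q1}, and (b) $r$ truncates at order $\hbar^{2}$ and equals $r_0+\hbar r_1+\hbar^{2}r_2$ with \eqref{eq:WKB-Wg,n:r1}--\eqref{eq:WKB-Wg,n:r2}.

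The truncation is where the topological recursion does the work, through the linear loop equation of Theorem \ref{thm:TRprop}(iii): for $(g,n)\neq(0,1),(0,2)$ one has $W_{g,n}(z,\,\cdot\,)+W_{g,n}(\overline z,\,\cdot\,)=0$ in the first variable. Feeding this into $S+\widetilde S$, the bulk of every $\hbar^{m}$-term with $m\ge1$ cancels between $z$ and $\overline z$, leaving only boundary integrals over $D(z)-D(\overline z)=[z]-[\overline z]$; the requirement that these match the $\hbar$-expansion of $\tfrac{d}{dx}\log(\widetilde S-S)$ is exactly the statement that $q$ carries no term beyond $\hbar^{1}$. At the bottom order the $\hbar^{0}$-coefficient $-(S_0+\widetilde S_0)$ is governed by the regularized $W_{0,2}$-integral, whose distinct behavior over the two divisors produces the rational tail $\tfrac{2}{z-\overline z}-\sum_{\beta\in B}\tfrac{\nu_{\beta}+\nu_{\overline{\beta}}}{z-\beta}$; combined with $-\tfrac{d}{dx}\log\Delta$ (where $\Delta=y(z)-y(\overline z)$) this gives \eqref{eq:WKB-Wg,n:q1}. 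The function $r$ is then obtained from $r=-\hbar^{2}(S'+S^{2})-\hbar q S$; the same cancellation together with Proposition \ref{prop:order-of-ydx}, which pins the order of $\Delta\,dx$ at the points of $B=x^{-1}(\Sing(P))$, forces all orders above $\hbar^{2}$ to vanish, while the top coefficient is localized at the double poles $B_1=x^{-1}(\Sing_2(P))$ and weighted by the residues $C_\beta=\Res_{z=\beta}\Delta\,dx$, yielding \eqref{eq:WKB-Wg,n:r2}.

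The main obstacle is precisely the bookkeeping in the previous step: making rigorous that, after the loop-equation cancellation, the mismatch between the integration divisors $D(z)$ and $D(\overline z)$ contributes only the explicit rational tail and nothing of higher order in $\hbar$. This requires (i) a careful regularization of the $W_{0,2}$-integral near the coincident endpoint; (ii) control of the singularities of each $W_{g,n}$, which by Theorem \ref{thm:TRprop}(ii) lie only at ramification points and carry no residue, so that the freed contours can be deformed without extra contributions; and (iii) verifying that the normalization $\sum_{\beta\in B}\nu_\beta=1$ and the specific choice $B=x^{-1}(\Sing(P))$ are exactly what is needed to cancel the would-be singular part of $q_1$ and $r_1$ at $z=\overline z$ and at the singular points, so that $q,r$ are honest rational functions of $x$. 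Once these are settled, the identification of $q_1,r_1,r_2$ reduces to a finite residue computation.
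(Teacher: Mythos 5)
Your Wronskian setup is legitimate as far as it goes: with $\deg_y P=2$ the two inverse branches of $x(z)$ give two formal series $\psi$, $\widetilde\psi$; the formal Wronskian $W=\psi\widetilde\psi(\widetilde S-S)$ is invertible because its leading term is $-\hbar^{-1}\Delta(z)\,\psi\widetilde\psi$; and the operator defined by $\hbar q=-\hbar^{2}W'/W$ together with $r=-\hbar^{2}(S'+S^{2})-\hbar qS$ is indeed the unique monic second-order operator annihilating both series. This correctly reduces the theorem to your claims (a) and (b): $q$ must stop at order $\hbar^{1}$ and $r$ at order $\hbar^{2}$, with the stated coefficients. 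The gap is that you never prove (a) and (b), and the tools you allot to them cannot do the job. The anti-invariance of Theorem \ref{thm:TRprop}(iii), the pole structure of Theorem \ref{thm:TRprop}(ii), and the identity $D(z)-D(\overline z)=[z]-[\overline z]$ only let you rewrite $S+\widetilde S$ at each order $\hbar^{m}$ ($m\ge 1$) as iterated integrals of $W_{g,n}$ in which one slot is integrated from $\overline z$ to $z$; nothing in that bookkeeping relates these boundary terms to the expansion of $\frac{d}{dx}\log(\widetilde S-S)$, which depends nonlinearly on the differences $[\widetilde S]_k-[S]_k$. In fact no argument using only those properties can work: replace $W_{1,1}$ by $W_{1,1}+\omega$, where $\omega$ is any anti-invariant differential with poles only at ramification points and no residues. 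The modified family still satisfies Theorem \ref{thm:TRprop}(i)--(iii) and all the singularity statements you cite, but $\psi$ gets multiplied by $\exp\bigl(\hbar\int_{D(z)}\omega\bigr)$, so $S+\widetilde S$ is unchanged while $\widetilde S-S$ shifts by $-2\hbar\,\omega/dx$, and $q$ acquires a generically nonzero term at order $\hbar^{3}$. Hence the truncation is not a consequence of symmetry plus pole structure: it requires the quadratic part of the recursion \eqref{eq:TR} itself, which your outline never invokes --- you flag exactly this step as ``the main obstacle,'' and your sub-tasks (i)--(iii) (regularization, contour control, the normalization $\sum_{\beta}\nu_\beta=1$) would not supply the missing quadratic input.

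This is precisely where the paper's proof spends its effort: contour deformation is applied to \eqref{eq:TR} itself (not merely to the anti-invariance relation), yielding the recursions for $G_{g,n}$ in Propositions \ref{prop:recursion-01} and \ref{prop:recursion-02}, whose quadratic terms $\sum G_{g_1,\cdot}\,G_{g_2,\cdot}$ survive the diagonal specialization (Propositions \ref{prop:recursion-H-01} and \ref{prop:That}) and become exactly the convolution $\sum_{j}\hat T_{m-j}\hat T_{j}$ in the Riccati hierarchy \eqref{eq:Riccati-gen-z-m+1} of an operator whose coefficients stop at $\hbar q_{1}$ and $\hbar^{2}r_{2}$; Lemmas \ref{lem:U1-expression} and \ref{lem:V2-expression} then identify $q_{1}$, $r_{1}$, $r_{2}$, proving truncation and the explicit formulas simultaneously. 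If you want to keep the Wronskian packaging, you would still have to establish this Riccati structure for one branch from \eqref{eq:TR}; but once that is done, $\hat P\psi=0$ follows directly and the Wronskian construction becomes redundant.
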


We call the equation $\hat{P}\psi = 0$ given by Theorem \ref{thm:WKB-Wg,n}
a quantum curve of the spectral curve.

\begin{rem}
\label{rem:MainThm}
\begin{itemize}
\item[(i)]
The set $B$ may contain the infinity.
In Theorem \ref{thm:WKB-Wg,n},
we use the convention that if $\beta \in B$ is the infinity,
$1/(z - \beta)$ in $q_1$, $r_1$ and $r_2$ is zero.
In what follows, we use this convention.

\item[(ii)]
It follows from Proposition \ref{prop:order-of-ydx} that
\begin{align}
B &= \text{the set of poles of $\Delta(z) dx(z)$}, \\
B_1 &=\text{the set of simple poles of $\Delta(z) dx(z)$}.
\end{align}
Hence, $C_\beta \ne 0$ for $\beta \in B_1$.

\item[(iii)]
The set $B$ may contain ramification points; 
this happens in examples treated in \cite{IKT-part2}.
It follows, however, from the definition of $\Sing (P)$ and 
Remark \ref{rem:ineffective} 
that ramification points contained in $B$ are ineffective.
Hence, $W_{g,n}$'s are holomorphic there, 
and hence, the formal series \eqref{eq:wave-z} is well-defined.
Note also that, since ${\rm Sing}(P)$ and ${\rm Sing}_2(P)$
are defined in terms of order of $Q_0(x)$ or $Q_{\infty}(x)$,
they are closed by the conjugate map;
that is, if $\beta \in B$ (resp. $\beta \in B_1$), then
$\overline{\beta} \in B$ (resp. $\overline{\beta} \in B_1$).

 \item[(iv)]
It is not obvious that $q_1(x)$, $r_1(x)$ and $r_2(x)$ defined by 
\eqref{eq:WKB-Wg,n:q1} -- \eqref{eq:WKB-Wg,n:r2} are rational functions in $x$. 
But for the examples which will be considered in \S4 and \cite{IKT-part2}, 
$q_1(x)$, $r_1(x)$ and $r_2(x)$ become rational functions in $x$. 

\item[(v)]
Because the degree of $x(z)$ is two by our assumption (AQ1),
we have two inverse functions of $x(z)$.
By Theorem \ref{thm:WKB-Wg,n},
we can construct two WKB type formal solutions
from these two inverse functions,
and they form a basis of the space of formal solutions of $\hat{P}\psi = 0$.

\end{itemize}
\end{rem}

\subsection{Preparations for the proof }
Our proof of Theorem \ref{thm:WKB-Wg,n} is based on
``diagonal specialization" which allows us to relate 
the topological recursion \eqref{eq:TR}  
to the recursive relations 
\eqref{eq:Riccati-gen-1}--\eqref{eq:Riccati-gen-3}
satisfied by the coefficients of WKB solution 
(\cite{DM}; see also \cite{BE, IS}).
In this subsection we prepare several statements for the proof.

In the remaining part of this section
we denote $D(z; \underline{\nu})$ by $D(z)$ for simplicity.
We also assume (A1) -- (A4), (AQ1) and (AQ2) 
as we have assumed in Theorem \ref{thm:WKB-Wg,n}.

Let us define
\begin{align}
\hat{T}(z, \hbar)
=
\frac{1}{\hbar} \hat{T}_{-1}(z) + \hat{T}_0(z) + \hbar \hat{T}_1(z) + \cdots
:= \frac{d}{dz} \log \varphi(z; \underline{\nu}, \hbar),
\end{align}
that is,
\begin{align}
\label{eq:def:That}
&\quad
\hat{T}(z, \hbar) dz =
\frac{1}{\hbar}W_{0,1}(z)
+ \int_{\zeta \in D(z)}
\left( W_{0, 2}(z, \zeta) - \frac{dx(z) \, dx(\zeta)}{(x(z) - x(\zeta))^2} \right)
\\
&\quad\qquad
+ \sum_{m = 1}^{\infty} \hbar^m
\left\{ \sum_{\substack{2g + n - 2 = m \\ g \geq 0, \, n \geq 1}}
\frac{1}{(n-1)!} \int_{\zeta_1 \in D(z)} \cdots \int_{\zeta_{n-1} \in D(z)}
W_{g, n}(z, \zeta_1, \cdots, \zeta_{n-1})
\right\}.
\notag
\end{align}
Then, in order to prove Theorem \ref{thm:WKB-Wg,n}, 
it is enough to show that $\hat{T}(z,\hbar)$ satisfies 
the Riccati equation associated with $\hat{P}$ 
after a change of variable $x = x(z)$.
Comparing the coefficients, we have
\begin{equation}
\label{eq:That_m}
\hat{T}_m(z) dz
=
\begin{cases}
\displaystyle
W_{0,1}(z)  & (m = -1),\\[2ex]
\displaystyle
\int_{\zeta \in D(z)}
\left\{
W_{0, 2}(z, \zeta)
- \frac{dx(z)dx(\zeta)}{(x(z) - z(\zeta))^2}
\right\}
 & (m = 0),\\[2ex]
\displaystyle
\sum_{\substack{2g + n - 2 = m \\ g \geq 0, n\geq 1}}
\frac{1}{(n-1)!} G_{g, n}(z, \cdots, z)
& (m \geq 1),
\end{cases}
\end{equation}
where
\begin{equation}
\label{def:G_gn}
G_{g, n}(z_0, z_1, \cdots, z_{n-1})
:=
\begin{cases}
W_{g, 1}(z_0) & (n = 1),\\[2ex]
\displaystyle
\int_{\zeta_1 \in D(z_1)}  \cdots  \int_{\zeta_{n-1} \in D(z_{n-1})}
W_{g, n}(z_0, \zeta_1, \cdots, \zeta_{n-1}) & (n \geq 2)
\end{cases}
\end{equation}
for $2g + n \geq 2$.
Note that $G_{g, n}(z_0, z_1, \cdots, z_{n-1})$
is a meromorphic differential (1-form) in the first variable $z_0$,
and is a meromorphic functions (0-form) in the remaining variables $z_1, \cdots, z_{n-1}$.
We will derive a recurrence relation satisfied by $\hat{T}_m$ by using several properties
(especially the topological recursion \eqref{eq:TR}) of $W_{g, n}$.

For the later convenience, we set
$G_{g, n} = W_{g, n} = 0$ for either $g < 0$ or $n \leq 0$.

\begin{prop}
\label{prop:G-properties-01}
\quad
\begin{itemize}
\item[{\rm{(i)}}]
$G_{g, n}(z_0, z_1, \cdots, z_{n-1})$ for $2g + n \geq 3$ is holomorphic
in each variable on $\mathbb{P}^1 \setminus (R^{*} \cup B)$.
All singular points of them are poles.

\item[{\rm{(ii)}}]
For $(g, n) = (0, 2)$,
\begin{align}
\label{eq:G_02}
G_{0, 2}(z_0, z_1)
=
\left(
\frac{1}{z_0 - z_1}
- \sum_{\beta \in B} \frac{\nu_{\beta}}{z_0 - \beta}
\right) dz_0.
\end{align}
\end{itemize}
\end{prop}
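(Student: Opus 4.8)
The plan is to establish both parts of Proposition \ref{prop:G-properties-01} by unwinding the definition \eqref{def:G_gn} of $G_{g,n}$ and exploiting the analytic properties of $W_{g,n}$ recorded in Theorem \ref{thm:TRprop} together with the explicit form of $W_{0,2}=B$.

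For part (ii), I would simply compute directly. Since $(g,n)=(0,2)$, we have $G_{0,2}(z_0,z_1) = \int_{\zeta\in D(z_1)} W_{0,2}(z_0,\zeta)$, and the divisor is $D(z_1) = [z_1] - \sum_{\beta\in B}\nu_\beta[\beta]$ with $\sum_{\beta\in B}\nu_\beta = 1$. Using $W_{0,2}(z_0,\zeta) = B(z_0,\zeta) = dz_0\,d\zeta/(z_0-\zeta)^2$ and integrating in $\zeta$ against the divisor, the primitive of $d\zeta/(z_0-\zeta)^2$ in $\zeta$ is $1/(z_0-\zeta)$, so evaluating at the points of the divisor yields $\bigl(\tfrac{1}{z_0-z_1} - \sum_{\beta\in B}\tfrac{\nu_\beta}{z_0-\beta}\bigr)dz_0$, which is exactly \eqref{eq:G_02}. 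One must check that the integrals against singular base points $\beta$ are handled by the regularization already adopted in \S\ref{subsec:WKB-series-from-TR}, but since only the $z_0$-dependence matters and the formula is manifestly meromorphic, this is routine.

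For part (i), the idea is to treat the first variable $z_0$ and the integration variables $z_1,\dots,z_{n-1}$ separately. In the first variable $z_0$, before integration $W_{g,n}(z_0,\zeta_1,\dots,\zeta_{n-1})$ has its only $z_0$-singularities at ramification points by Theorem \ref{thm:TRprop}(ii) (as $2g+n>2$); since these are effective ramification points only, and since integration in the remaining variables over the (fixed) divisor does not introduce new $z_0$-singularities, $G_{g,n}$ is holomorphic in $z_0$ away from $R^{*}$. In each remaining variable $z_j$ (which enters only through the upper limit $[z_j]$ of the divisor $D(z_j)$), differentiating in $z_j$ produces $W_{g,n}$ evaluated with $\zeta_j = z_j$, whose singularities in $z_j$ are again at ramification points in $R^{*}$, while the dependence on the base points $\beta\in B$ of the divisor can introduce poles precisely at $B$. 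Hence $G_{g,n}$ is holomorphic in each variable on $\mathbb{P}^1\setminus(R^{*}\cup B)$, and because $W_{g,n}$ is meromorphic with only polar singularities, so is $G_{g,n}$.

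The main obstacle will be the bookkeeping around the base points $\beta\in B$ of the divisor: one must verify that integrating $W_{g,n}$ against $D(z_j)$ can create poles at $z_j=\beta$ (coming from the endpoint of integration hitting a singularity of the integrand or from the regularization) but does not create spurious singularities elsewhere, and that the effective-versus-ineffective distinction (Definition \ref{def:effective-ramification} and Proposition \ref{prop:ineffective}) correctly restricts the ramification-point singularities to $R^{*}$ rather than all of $R$. I would make this precise by noting that $W_{g,n}$ is holomorphic at ineffective ramification points (by Definition \ref{def:effective-ramification}), so only $R^{*}$ contributes, and that the base points $\beta$ lie in $B\subset\mathbb{P}^1\setminus R^{*}$, keeping the two singular loci disjoint and accounting for the union $R^{*}\cup B$ in the statement.
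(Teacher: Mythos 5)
Your overall route coincides with the paper's own (very short) proof: part (ii) is exactly the same direct computation, using the primitive $1/(z_0-\zeta)$ of $B(z_0,\zeta)$ in $\zeta$ together with the normalization $\sum_{\beta\in B}\nu_{\beta}=1$, and part (i) rests on Theorem \ref{thm:TRprop} (ii) plus the observations that ineffective ramification points do not carry singularities of $W_{g,n}$ and that the base points of the divisor satisfy $B\subset\mathbb{P}^1\setminus R^{*}$. That bookkeeping is correct and in fact somewhat more explicit than what the paper writes.

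There is, however, one genuine omission: you never invoke the fact that the poles of $W_{g,n}$ for $2g+n>2$ have \emph{vanishing residues}, which is the other half of Theorem \ref{thm:TRprop} (ii) and is precisely the hypothesis the paper's one-line proof of (i) leans on (``poles whose residues vanish''). This property is needed at two places in your argument. First, it is what makes the divisor integrals $\int_{\beta}^{z_j}$ entering the definition \eqref{def:G_gn} path-independent (this is the meaning of ``of the second kind'' in \S\ref{sec:multidifferential}), so that $G_{g,n}$ is well defined at all. Second, your closing inference ``because $W_{g,n}$ is meromorphic with only polar singularities, so is $G_{g,n}$'' is false as stated: the primitive of a meromorphic differential with a nonzero residue has a logarithmic branch point, not a pole (e.g.\ $d\zeta/\zeta$). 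It is only because the residues at the points of $R^{*}$ vanish that $z_j\mapsto\int_{\beta}^{z_j}$ is single-valued and its singularities at $R^{*}$ are again poles, which is exactly the assertion ``all singular points of them are poles'' in part (i). The gap is easily repaired — cite the no-residue clause of Theorem \ref{thm:TRprop} (ii) at those two steps — but as written the key analytic ingredient of the argument is missing.
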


\begin{proof}
Because only singularities of $W_{g, n}$ for $2g + n \geq 3$ are poles whose
residues vanish, we obtain (i).
By a straightforward computation using
$W_{0, 2}(z_0, z_1) = B(z_0, z_1) = dz_0 dz_1/(z_0 - z_1)^2$,
we can prove (ii).
\end{proof}

\begin{prop}
\label{prop:G-properties-02}
\quad
\begin{itemize}
\item[{\rm{(i)}}]
$\displaystyle
G_{0, 2}(\overline{z_0}, z_1)
=
\left(
\frac{1}{z_0 - \overline{z_1}}
- \sum_{\beta \in B} \frac{\nu_{\beta}}{z_0 - \overline{\beta}}
\right) dz_0.$

\item[{\rm{(ii)}}]
For $2g + n \geq 3$,
\begin{equation}
G_{g, n}(\overline{z_0}, z_1, \cdots, z_{n-1})
= - G_{g, n}(z_0, z_1, \cdots, z_{n-1}).
\end{equation}
\end{itemize}
\end{prop}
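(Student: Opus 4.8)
The plan is to reduce both statements to the conjugation relation \thmref{TRprop}~(iii), used together with the definition \eqref{def:G_gn} of $G_{g,n}$ and the explicit formula of \propref{G-properties-01}~(ii). The two parts behave quite differently: for $2g+n\geq 3$ the right-hand side of the relation in \thmref{TRprop}~(iii) vanishes, which makes part (ii) a one-line sign computation, whereas the surviving term for $(g,n)=(0,2)$ forces a slightly more careful argument in part (i).

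For part (ii) I would argue as follows. Since $2g+n\geq 3$ excludes $(g,n)=(0,2)$, applying \thmref{TRprop}~(iii) in the first slot gives $W_{g,n}(\overline{z_0},\zeta_1,\dots,\zeta_{n-1}) = -\,W_{g,n}(z_0,\zeta_1,\dots,\zeta_{n-1})$. For $n\geq 2$ I substitute this into \eqref{def:G_gn}: the conjugation touches only the first variable, while the integrations run over $\zeta_i\in D(z_i)$ and are left unchanged, so the sign simply factors out of the integral and yields $G_{g,n}(\overline{z_0},z_1,\dots,z_{n-1}) = -\,G_{g,n}(z_0,z_1,\dots,z_{n-1})$. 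The case $n=1$ (hence $g\geq 1$) is immediate from the same relation because $G_{g,1}=W_{g,1}$.

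For part (i), where $(g,n)=(0,2)$ and the right-hand side of \thmref{TRprop}~(iii) no longer vanishes, the naive sign argument fails. Instead I would invoke that relation once in the first variable and once in the second (the latter legitimate by the symmetry in \thmref{TRprop}~(i)); both produce the same symmetric expression $dx(z_0)\,dx(\zeta_1)/(x(z_0)-x(\zeta_1))^2$, and subtracting them gives the key identity $W_{0,2}(\overline{z_0},\zeta_1) = W_{0,2}(z_0,\overline{\zeta_1})$. Substituting this into $G_{0,2}(\overline{z_0},z_1)=\int_{\zeta_1\in D(z_1)}W_{0,2}(\overline{z_0},\zeta_1)$ and changing the integration variable $\zeta_1\mapsto\overline{\zeta_1}$ replaces the divisor $D(z_1)=[z_1]-\sum_{\beta\in B}\nu_\beta[\beta]$ by its conjugate $[\overline{z_1}]-\sum_{\beta\in B}\nu_\beta[\overline{\beta}]$; the elementary antiderivative of $W_{0,2}(z_0,\cdot)$ then reproduces exactly the computation behind \propref{G-properties-01}~(ii) with $z_1,\beta$ replaced by $\overline{z_1},\overline{\beta}$, which is the asserted formula.

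The one genuinely delicate point will be the bookkeeping in part (i): the symbol $W_{0,2}(\overline{z_0},\zeta_1)$ must be read as the pullback of the Bergman kernel under the conjugate map, so a literal substitution of $\overline{z_0}$ into the formula of \propref{G-properties-01}~(ii) would carry a spurious Jacobian factor $d\overline{z_0}/dz_0$ and give the wrong differential. Routing the argument through the identity $W_{0,2}(\overline{z_0},\zeta_1)=W_{0,2}(z_0,\overline{\zeta_1})$ transfers the conjugation onto the integration variable and thereby eliminates this Jacobian, reducing everything to a change of variables in the divisor integral. I would also use implicitly that $B$ is closed under conjugation (cf. \remref{MainThm}~(iii)), so that the conjugated divisor is again supported away from $R^\ast$ and the integral remains well defined.
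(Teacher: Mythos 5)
Your proposal is correct and takes essentially the same route as the paper: part (ii) is obtained exactly as in the paper as a direct sign consequence of Theorem \ref{thm:TRprop} (iii), and for part (i) you derive the key identity $W_{0,2}(\overline{z_0},\zeta_1)=W_{0,2}(z_0,\overline{\zeta_1})$ from that theorem combined with the symmetry of $W_{0,2}$, then transfer the conjugation onto the integration variable so that the divisor integral evaluates with $z_1,\beta$ replaced by $\overline{z_1},\overline{\beta}$, which is precisely the paper's computation. Your observation that $W_{0,2}(\overline{z_0},\zeta_1)$ must be read as a pullback (so that the conjugation is moved to the integration variable rather than substituted literally) is a careful articulation of a convention the paper uses implicitly.
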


\begin{proof}
By Theorem \ref{thm:TRprop} (iii), we have
\begin{equation}
W_{0, 2}(z_0, z_1) + W_{0, 2}(\overline{z_0}, z_1) 
= \frac{dx(z_0)dx(z_1)}{\, (x(z_0) - x(z_1))^2}.
\end{equation}
Because $W_{0, 2}$ is symmetric in its variables, we also have
\begin{equation}
W_{0, 2}(z_0, z_1) + W_{0, 2}(z_0, \overline{z_1}) 
= \frac{dx(z_0)dx(z_1)}{\, (x(z_0) - x(z_1))^2}.
\end{equation}
From these two relations, we have
\begin{equation}
W_{0, 2}(z_0, \overline{z_1}) = W_{0, 2}(\overline{z_0}, z_1).
\end{equation}
Thus
\begin{align}
G_{0, 2}(\overline{z_0}, z_1)
&= \int_{\zeta_1 \in  D(z_1)} W_{0, 2}(\overline{z_0}, \zeta_1)
= \int_{\zeta_1\in D(z_1)} W_{0, 2}(z_0, \overline{\zeta_1})
\\
&= \sum_{\beta \in B} \nu_{\beta}
\left(\int^{z_1}_{\beta} \frac{d\overline{\zeta_1}}{(z_0 - \overline{\zeta_1})^2} \right) dz_0
= \sum_{\beta \in B} \nu_{\beta}
\left(\int^{\overline{z_1}}_{\overline{\beta}}
\frac{d\xi_1}{(z_0- \xi_1 )^2} \right) dz_0\notag\\
&=
\sum_{\beta \in B} \nu_{\beta}
\left(\frac{1}{z_0 - \overline{z_1}} - \frac{1}{z_0 - \overline{\beta}}\right) dz_0
=
\left(
\frac{1}{z_0 - \overline{z_1}}
- \sum_{\beta \in B} \frac{\nu_{\beta}}{z_0 - \overline{\beta}}
\right) dz_0.\notag
\end{align}
The relation (ii) is a direct consequence of Theorem \ref{thm:TRprop} (iii).
\end{proof}

We now state two propositions which give recurrence relations of $\{G_{g, n}\}$.
These relations play key roles in the proof of Theorem \ref{thm:WKB-Wg,n}.

\begin{prop}
\label{prop:recursion-01}
We have
\begin{align}
\label{eq:prop:recursion-00}
G_{1, 1}(z_0) &= \frac{B(z_0, \overline{z_0})}{\Delta(z_0) dx(z_0)}
\end{align}
and
\begin{align}
\label{eq:prop:recursion-01}
& G_{0, 3}(z_0, z_1, z_2)\\
&\qquad
=
G_{0, 2}(z_0, z_1)
\left\{
\frac{G_{0, 2}(\overline{z_0}, z_2)}{\Delta(z_0) dx(z_0)}
-
\frac{G_{0, 2}(\overline{z_1}, z_2)}{\Delta(z_1) dx(z_1)}
\right\}
+ \frac{G_{0, 2}(z_0, \overline{z_1}) G_{0, 2}(\overline{z_1}, z_2)}{\Delta (z_1) dx(z_1)}
\notag\\
&\qquad\qquad
+
G_{0, 2}(z_0, z_2)
\left\{
\frac{G_{0, 2}(\overline{z_0}, z_1)}{\Delta(z_0) dx(z_0)}
-
\frac{G_{0, 2}(\overline{z_2}, z_1)}{\Delta(z_2) dx(z_2)}
\right\}
+ \frac{G_{0, 2}(z_0, \overline{z_2}) G_{0, 2}(\overline{z_2}, z_1)}{\Delta (z_2) dx(z_2)}
\notag\\
&\qquad\qquad
- \sum_{\beta \in B_1} \frac{\nu_{\beta}
 \nu_{\overline{\beta}}}{C_{\beta}} \big\{G_{0, 2}(z_0, \beta)
 -  G_{0,2}(z_0, \overline{\beta}) \big\}.
\notag
\end{align}
\end{prop}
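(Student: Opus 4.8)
The plan is to extract both formulas directly from the topological recursion \eqref{eq:TR}, using the fact that the degree of $x(z)$ is two so that the only ramification points entering the recursion are the effective ones in $R^\ast$, and that the recursion kernel $K_r(z_0,z)$ simplifies on a two-sheeted cover. For $G_{1,1}(z_0)=W_{1,1}(z_0)$, I would apply \eqref{eq:TR} with $(g,n)=(1,0)$ on the right-hand side, which contributes only the term $W_{0,2}(z,\overline{z})=B(z,\overline{z})$ (the stable sum is empty). Thus
\begin{equation}
W_{1,1}(z_0)=\sum_{r\in R^\ast}\Res_{z=r}K_r(z_0,z)\,B(z,\overline{z}).
\end{equation}
The key computation is to evaluate this residue. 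Near a ramification point the integrand $K_r(z_0,z)B(z,\overline z)$ has a pole, and the standard move is to rewrite the sum of residues over all ramification points as (minus) the sum of residues at the remaining poles, namely at $z=z_0$ and $z=\overline{z_0}$ coming from the Bergman kernel hidden in $K_r$. Carrying out this residue-shift and using the explicit form \eqref{eq:RecursionKernel} of the kernel together with $\Delta(z)=y(z)-y(\overline z)$, the contribution collapses to $B(z_0,\overline{z_0})/(\Delta(z_0)dx(z_0))$, which is \eqref{eq:prop:recursion-00}.

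For $G_{0,3}$, I would start from $W_{0,3}(z_0,z_1,z_2)$ given by \eqref{eq:TR} with $(g,n)=(0,2)$: the unstable term $W_{-1,\cdot}$ vanishes, and the stable sum produces $W_{0,2}(z,z_1)W_{0,2}(\overline z,z_2)+W_{0,2}(z,z_2)W_{0,2}(\overline z,z_1)$. Again I would trade the residues at ramification points for residues at the poles of the $W_{0,2}$'s, i.e.\ at $z=z_0,\overline{z_0},z_1,\overline{z_1},z_2,\overline{z_2}$, obtaining an explicit rational expression for $W_{0,3}$ in terms of $B$ and $\Delta$. I would then integrate in $\zeta_1\in D(z_1)$ and $\zeta_2\in D(z_2)$ to form $G_{0,3}$ according to \eqref{def:G_gn}; here the relation $\int_{\zeta\in D(z_j)}W_{0,2}(\,\cdot\,,\zeta)=G_{0,2}(\,\cdot\,,z_j)$ and its conjugate version from Proposition \ref{prop:G-properties-02}(i) convert every $W_{0,2}$ factor into a $G_{0,2}$ factor. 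The divisor $D(z_j)$ contributes, besides the $[z_j]$ part, the base points $\beta\in B$ with weights $\nu_\beta$; tracking the residues at the effective ramification points that are \emph{also} simple poles of $\Delta(z)dx(z)$ (the set $B_1$) produces precisely the correction sum $-\sum_{\beta\in B_1}(\nu_\beta\nu_{\overline\beta}/C_\beta)\{G_{0,2}(z_0,\beta)-G_{0,2}(z_0,\overline\beta)\}$, since $C_\beta=\Res_{z=\beta}\Delta(z)dx(z)$ governs the leading behaviour of the kernel there.

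The main obstacle will be the bookkeeping in the residue-shift for $G_{0,3}$: one must correctly account for which poles of the integrated $W_{0,2}$ factors coincide with ramification points (contributing through the kernel's $1/\Delta$ singularity) versus the base points $\beta\in B$ (contributing through the divisor weights $\nu_\beta$). In particular, the delicate point is isolating the $B_1$-contribution with the exact coefficient $\nu_\beta\nu_{\overline\beta}/C_\beta$; this requires computing the local expansion of $K_r(z_0,z)$ at a double pole of $x(z)$ that is simultaneously a simple pole of $\Delta(z)dx(z)$, and matching it against the residue $C_\beta$. The symmetry structure of the final formula—two bracketed groups obtained by exchanging the roles of $z_1$ and $z_2$—should serve as a useful consistency check, as it reflects the symmetry of $W_{0,3}$ guaranteed by Theorem \ref{thm:TRprop}(i). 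Everything else reduces to the rational-function identities for $G_{0,2}$ recorded in Propositions \ref{prop:G-properties-01} and \ref{prop:G-properties-02}.
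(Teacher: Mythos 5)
Your derivation of \eqref{eq:prop:recursion-00} is essentially the paper's: the paper also converts the sum of residues over ramification points into (minus) residues at the remaining poles, the only cosmetic difference being that it uses the divisor kernel \eqref{eq:RecursionKernel-D}, whose numerator $G_{0,2}(z_0,\xi)$ has a pole only at $\xi=z_0$, so no residue at $\overline{z_0}$ is needed. The problem is in your treatment of $G_{0,3}$, specifically in the mechanism you propose for the correction sum over $B_1$. You claim these terms come from ``residues at the effective ramification points that are \emph{also} simple poles of $\Delta(z)dx(z)$ (the set $B_1$)'' and that one must expand $K_r(z_0,z)$ ``at a double pole of $x(z)$ that is simultaneously a simple pole of $\Delta(z)dx(z)$.'' No such points exist: by Lemma \ref{lem:even-is-not-ramification} the differential $\Delta(z)dx(z)$ never has a \emph{simple} pole at a ramification point (there is no $r\in R$ with $\rho(x(r);P)=-2$), so every $\beta\in B_1$ is an unramified point of $x(z)$; and by Proposition \ref{prop:ineffective}, any ramification point at which $\Delta(z)dx(z)$ does have a pole is \emph{ineffective} and contributes zero residue to \eqref{eq:TR}. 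Hence the residue computation you describe is vacuous and can never produce the term $-\sum_{\beta\in B_1}(\nu_\beta\nu_{\overline{\beta}}/C_\beta)\{G_{0,2}(z_0,\beta)-G_{0,2}(z_0,\overline{\beta})\}$.

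The order of operations matters here, and this is where the paper's route differs from yours. The paper integrates against the divisors \emph{first}, so that inside the residue the factors are $G_{0,2}(\xi,z_j)$ rather than $B(\xi,\zeta_j)$; by \eqref{eq:G_02}, $G_{0,2}(\xi,z_j)$ (unlike the Bergman kernel) has simple poles at every $\xi=\beta$, $\beta\in B$, so the residue theorem forces one to pick up residues at the base points $\beta$ as well. The product $G_{0,2}(\xi,z_1)G_{0,2}(\overline{\xi},z_2)$ then has a double pole at $\xi=\beta$ with leading coefficient $\nu_\beta\nu_{\overline{\beta}}$, and dividing by $\Delta(\xi)dx(\xi)$ leaves a nonzero residue exactly when $\beta\in B_1$, with coefficient $\nu_\beta\nu_{\overline{\beta}}/C_\beta$; symmetrizing via $C_{\overline{\beta}}=-C_\beta$ gives the stated sum. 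In your order (explicit $W_{0,3}$ first, divisor integration second) these terms would instead have to emerge as boundary evaluations at the base points when integrating the exact (total-derivative) parts of the explicit $W_{0,3}$ formula, and this is additionally delicate: the individual terms of that formula have spurious poles along $\zeta_2=\overline{\zeta_1}$ which cancel only in the sum (the full $W_{0,3}$ is holomorphic there by Theorem \ref{thm:TRprop}), so term-by-term integration over $D(z_1)\times D(z_2)$ diverges at the corners $(\zeta_1,\zeta_2)=(\beta,\overline{\beta})$, and the finite $B_1$ contribution is precisely the residue of that cancellation. Your proposal neither identifies this mechanism nor provides a way to carry it out, so the key step of the proposition is missing.
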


\begin{prop}
\label{prop:recursion-02}
For $2g + n \geq 3$
we have
\begin{align}
\label{eq:prop:recursion-02}
G_{g, n + 1}(z_0, z_I)
&
=
\frac{1}{\Delta(z_0) dx (z_0)}
\int_{\zeta_1 \in D(z_1)} \cdots \int_{\zeta_n \in D(z_n)}
W_{g-1, n + 2}(z_0, \overline{z_0}, \zeta_I)
\\
&\qquad
+
\sum_{j = 1}^n
G_{0, 2}(z_0, z_j)
\left\{
\frac{G_{g, n}(\overline{z_0}, z_{I \setminus \{j\}})}{\Delta(z_0) dx(z_0)}
-
\frac{G_{g, n}(\overline{z_j}, z_{I \setminus \{j\}})}{\Delta(z_j) dx(z_j)}
\right\}
\notag\\
&\qquad
+
\sum_{j = 1}^n
\left\{
\frac{G_{0, 2}(\overline{z_0}, z_j) G_{g, n} (z_0, z_{I \setminus \{j\}})}{\Delta(z_0) dx(z_0)}
+
\frac{G_{0, 2}(z_0, \overline{z_j}) G_{g, n} (\overline{z_j}, z_{I \setminus \{j\}})}{\Delta(z_j) dx(z_j)}
\right\}
\notag\\
&\qquad
+
\frac{1}{\Delta(z_0) dx (z_0)}
\sum_{\substack{g_1 + g_2 = g, \\ I_1 \sqcup I_2 = I, \\
 2g_1 + |I_1| \geq 2, \\ 2g_2 + |I_2| \geq 2}}
G_{g_1, |I_1| + 1}(z_0, z_{I_1})
G_{g_2, |I_2| + 1}(\overline{z_0}, z_{I_2}),
\notag
\end{align}
where $I = \{1, 2, \cdots, n\}$.
\end{prop}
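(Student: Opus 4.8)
My plan is to obtain \eqref{eq:prop:recursion-02} directly from the topological recursion \eqref{eq:TR} by integrating against the divisors and then converting the sum of residues at ramification points into a sum of residues at the remaining poles. First I would write \eqref{eq:TR} for $W_{g,n+1}(z_0,\zeta_1,\dots,\zeta_n)$, taking $z_0$ as the distinguished variable and $\zeta_1,\dots,\zeta_n$ as the spectators, and apply $\int_{\zeta_1\in D(z_1)}\cdots\int_{\zeta_n\in D(z_n)}$ to both sides. Since these divisor-integrations are linear and act only on the spectator variables, they commute with $\Res_{z=r}$ and with the finite sums in \eqref{eq:TR}; pushing them inside turns every factor $W_{g',n'}(\cdot,\zeta_{I'})$ into the corresponding $G_{g',n'}$, and in particular each $W_{0,2}(z,\zeta_j)=B(z,\zeta_j)$ or $W_{0,2}(\overline{z},\zeta_j)$ becomes $G_{0,2}(z,z_j)$ or $G_{0,2}(\overline{z},z_j)$, whose explicit forms are recorded in Propositions \ref{prop:G-properties-01}(ii) and \ref{prop:G-properties-02}(i). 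This realizes the left-hand side as $G_{g,n+1}(z_0,z_I)=\sum_{r\in R}\Res_{z=r}\Omega(z)$, where $\Omega(z)$ is the recursion kernel times the integrated bracket.

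Because the degree of $x(z)$ is two, the conjugate map is a globally defined involution (Remark \ref{rem:conjugate}) and the kernel admits the single global expression $K(z_0,z)=\dfrac{dz_0}{2\,\Delta(z)\,dx(z)}\left(\dfrac{1}{z_0-z}-\dfrac{1}{z_0-\overline{z}}\right)$; hence $\Omega(z)$ is one globally defined meromorphic $1$-form in $z$, and the residue theorem on $\mathbb{P}^1$ gives $G_{g,n+1}(z_0,z_I)=-\sum_{p\notin R}\Res_{z=p}\Omega(z)$. The next step is to list the poles of $\Omega$ off $R$. Every $W_{g',n'}$ with $2g'+n'>2$ is holomorphic except at ramification points (Theorem \ref{thm:TRprop}(ii)), and $\Delta(z)\,dx(z)$ vanishes only on $R$ by (AQ2), so $1/(\Delta(z)\,dx(z))$ contributes poles only on $R$; the poles of $\Omega$ off $R$ therefore come from the kernel, at $z=z_0$ and $z=\overline{z_0}$, and from the single $G_{0,2}$ factor of the ``special'' split terms $G_{0,2}(z,z_j)\,G_{g,n}(\overline{z},z_{I\setminus\{j\}})$ and $G_{g,n}(z,z_{I\setminus\{j\}})\,G_{0,2}(\overline{z},z_j)$, at $z=z_j$ and $z=\overline{z_j}$ (the possible poles at the divisor points $\beta\in B$ are treated below). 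I would accordingly split the quadratic sum in \eqref{eq:TR}, separating these special terms from the generic ones with $2g_i+|I_i|\ge 2$, which only have poles at $z_0,\overline{z_0}$.

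Computing the residues is then a matter of careful bookkeeping. At $z=z_0$ and $z=\overline{z_0}$ the simple kernel poles produce the factor $1/(\Delta(z_0)\,dx(z_0))$, and evaluating the bracket there yields the integrated handle term with $W_{g-1,n+2}(z_0,\overline{z_0},\zeta_I)$, the products $G_{0,2}(z_0,z_j)\,G_{g,n}(\overline{z_0},\cdot)$ and $G_{0,2}(\overline{z_0},z_j)\,G_{g,n}(z_0,\cdot)$, and the generic quadratic $G_{g_1}(z_0,\cdot)\,G_{g_2}(\overline{z_0},\cdot)$. Using the antisymmetry $G_{g,n}(\overline{z},\cdot)=-G_{g,n}(z,\cdot)$ of Proposition \ref{prop:G-properties-02}(ii) together with the symmetry of $W$, the $z_0$ and $\overline{z_0}$ contributions coincide and cancel the $2$ in the kernel, giving the first line, the $\Delta(z_0)$-denominator parts of the two middle lines, and the last line of \eqref{eq:prop:recursion-02}. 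At $z=z_j$ and $z=\overline{z_j}$ the simple pole of $G_{0,2}$ has residue $1$, the kernel contributes $1/(\Delta(z_j)\,dx(z_j))$, and the residual kernel factor $\tfrac{1}{z_0-z_j}-\tfrac{1}{z_0-\overline{z_j}}$ is rewritten as $G_{0,2}(z_0,z_j)-G_{0,2}(z_0,\overline{z_j})$, the $\beta$-sums cancelling in this difference by Propositions \ref{prop:G-properties-01}(ii) and \ref{prop:G-properties-02}(i); this supplies the $\Delta(z_j)$-denominator parts of the middle lines.

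The step I expect to demand the most care is controlling the divisor points $\beta\in B$. Each $G_{0,2}$ factor has a simple pole at $z=\beta$, whereas the higher correlation functions $G_{g,n}$ are regular there: by Remark \ref{rem:ineffective} any ramification point lying in $B$ is ineffective, hence not in $R^{\ast}$, so the $W$'s and their divisor-integrals are holomorphic at every point of $B$. On the other hand, $K(z_0,z)$ has a zero at $\beta$ coming from the pole of $\Delta(z)\,dx(z)$ there (Remark \ref{rem:MainThm}(ii)), of order at least one, and higher when $\rho(x(\beta);P)\le -3$. One must verify that this zero dominates the at-most-simple pole contributed by the lone $G_{0,2}$ factor of each special term, so that the residues of $\Omega$ at every $\beta\in B$ vanish and no spurious terms survive. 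This is precisely the feature distinguishing the present range $2g+n\ge 3$ from the base cases of Proposition \ref{prop:recursion-01}, where two $G_{0,2}$ factors collide at $\beta$ and a simple kernel zero cannot absorb the resulting double pole, leaving the residual $\sum_{\beta\in B_1}\nu_\beta\nu_{\overline{\beta}}/C_\beta$ contribution. Once this vanishing, the factor-of-two cancellation, and all signs are checked, assembling the surviving residues reproduces \eqref{eq:prop:recursion-02}.
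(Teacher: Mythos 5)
Your proposal is correct, and its skeleton is the paper's: integrate \eqref{eq:TR} against the divisors so that every spectator $W$ becomes a $G$, use the residue theorem on $\mathbb{P}^1$ to trade the residues at ramification points for (minus) those at the remaining poles, evaluate at $z_j,\overline{z_j}$ (where $\Delta(\overline{z_j})\,dx(\overline{z_j})=-\Delta(z_j)\,dx(z_j)$ yields the $\Delta(z_j)$-denominators and the leftover kernel factor reassembles into $G_{0,2}(z_0,z_j)-G_{0,2}(z_0,\overline{z_j})$), and observe that the residues at $\beta\in B$ vanish because the kernel's zero, coming from the pole of $\Delta\,dx$ there (Remark \ref{rem:MainThm} (ii)), absorbs the lone simple pole of $G_{0,2}$ --- your contrast with Proposition \ref{prop:recursion-01}, where two $G_{0,2}$ factors collide at $\beta$ and leave the $B_1$-terms, is exactly the paper's point. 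The genuine difference is the choice of kernel. The paper runs the recursion with the divisor-form kernel $K_{D,r}(z_0,\xi)=G_{0,2}(z_0,\xi)/\bigl(\Delta(\xi)\,dx(\xi)\bigr)$ of \eqref{eq:RecursionKernel-D}, invoking the remark after Proposition \ref{prop:ineffective} (cf.\ \cite{BE-12,BE}); then the only pole off $R^{\ast}$, besides $z_j$ and $\overline{z_j}$, is $\xi=z_0$, and no factor of $2$ ever appears. You keep the original kernel \eqref{eq:RecursionKernel}, which has poles at both $z_0$ and $\overline{z_0}$, so you must show those two residues coincide in order to cancel the $1/2$. That step is right, but your stated justification (antisymmetry of $G_{g,n}$ plus symmetry of $W$) is the loose spot; the clean argument is that the whole $1$-form is invariant under pullback by the global involution $z\mapsto\overline{z}$, since $K(z_0,\overline{z})=K(z_0,z)$ (the sign flips of the numerator and of $\Delta\,dx$ cancel) and the bracket in \eqref{eq:TR} is manifestly involution-symmetric, whence the residues at the conjugate points $z_0$ and $\overline{z_0}$ agree. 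The trade-off: the paper's route is shorter and never touches $\overline{z_0}$, but it leans on the kernel-replacement remark, which is only justified by citation; yours is self-contained relative to the defining form of the recursion, at the price of this extra symmetrization.
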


\begin{proof}[Proof of Proposition \ref{prop:recursion-01}]
It follows from the topological recursion \eqref{eq:TR} for $(g, n) = (1, 0)$ that
\begin{equation}
G_{1, 1}(z_0)
=
\sum_{r \in R^{*}} \Res_{\xi = r}
\left[\frac{G_{0, 2}(z_0, \xi)}{\Delta(\xi) dx(\xi)} W_{0, 2}(\xi, \overline{\xi})\right]
\end{equation}
holds (cf. \eqref{eq:RecursionKernel-D} and Proposition \ref{prop:ineffective} (ii)).
Here we remind the readers that the set $R^\ast$ consists of
the effective ramification points in the sense of
Definition \ref{def:effective-ramification}.
Because, under the assumption (AQ2),
the singular points of the integrand are contained in
$R^{*} \cup \{z_0\}$, the residue theorem gives
\begin{equation}
G_{1, 1}(z_0)
=
- \Res_{\xi = z_0}
\left[\frac{G_{0, 2}(z_0, \xi)}{\Delta(\xi) dx(\xi)} W_{0, 2}(\xi, \overline{\xi})\right]
= \frac{W_{0, 2}(z_0, \overline{z_0})}{\Delta (z_0) dx(z_0)}.
\end{equation}
Here we have used \eqref{eq:G_02} to compute the residue.
Similarly, it follows from the topological recursion \eqref{eq:TR} for $(g, n) = (0, 2)$ that
\begin{equation}
G_{0, 3}(z_0, z_1, z_2)
= \sum_{r \in R^{*}} \Res_{\xi = r}
\big[ g_{0, 3}(z; z_0, z_1, z_2) \big]
\end{equation}
where
\begin{align}
g_{0, 3}(\xi; z_0, z_1, z_2)
&=
K_{D(\xi)}(z_0, \xi)
\big\{ G_{0, 2}(\xi, z_1) G_{0, 2}(\overline{\xi}, z_2)
+  G_{0, 2}(\xi, z_2) G_{0, 2}(\overline{\xi}, z_1) \big\}
\\
&=
\frac{G_{0, 2}(z_0, \xi)}{\Delta(\xi) dx(\xi)}
\big\{ G_{0, 2}(\xi, z_1) G_{0, 2}(\overline{\xi}, z_2)
+  G_{0, 2}(\xi, z_2) G_{0, 2}(\overline{\xi}, z_1) \big\}.
\nonumber
\end{align}
Since
$g_{0, 3}(\xi; z_0, z_1, z_2)$ is holomorphic in $\xi$ except
for points in $R^{*} \cup  B \cup \{z_0, z_1, \overline{z_1},  z_2, \overline{z_2}\}$,
the residue theorem gives
\begin{align}
G_{0, 3}(z_0, z_1, z_2)
& = - \sum_{r \in \{z_0, z_1, \overline{z_1}, z_2, \overline{z_2}\}}
\Res_{\xi = r} \big[ g_{0, 3}(\xi; z_0, z_1, z_2) \big]
- \sum_{\beta \in B}
\Res_{\xi = \beta} \big[ g_{0, 3}(\xi; z_0, z_1, z_2) \big].
\end{align}
By Proposition \ref{prop:G-properties-02},
we find that
$g_{0, 3}(\xi; z_0, z_1, z_2)$ has simple poles
at $z_0$, $z_1$, $\overline{z_1}$,
$z_2$, $\overline{z_2}$, and its residues are given respectively by
\begin{align*}
&\Res_{\xi = z_0} \big[ g_{0, 3}(\xi; z_0, z_1, z_2) \big]\\
&\qquad\quad
=
- \frac{1}{\Delta(z_0) dx(z_0)} \big\{
G_{0, 2}(z_0, z_1) G_{0, 2}(\overline{z_0}, z_2)
+ G_{0, 2}(\overline{z_0}, z_1) G_{0, 2}(z_0, z_2)
\big\},
\notag\\
&\Res_{\xi = z_1} \big[ g_{0, 3}(\xi; z_0, z_1, z_2) \big]
+\Res_{\xi = \overline{z_1}} \big[ g_{0, 3}(\xi; z_0, z_1, z_2) \big]\\
&
\qquad\quad
=
\frac{G_{0, 2}(z_0, z_1) G_{0, 2}(\overline{z_1}, z_2)}{\Delta(z_1) dx(z_1)}
-
\frac{G_{0, 2}(z_0, \overline{z_1}) G_{0, 2}(\overline{z_1}, z_2)}{\Delta(z_1) dx(z_1)},
\notag\\
&\Res_{\xi = z_2} \big[ g_{0, 3}(\xi; z_0, z_1, z_2) \big]
+\Res_{\xi = \overline{z_2}} \big[ g_{0, 3}(\xi; z_0, z_1, z_2) \big]\\
&
\qquad\quad
=
\frac{G_{0, 2}(z_0, z_2) G_{0, 2}(\overline{z_2}, z_1)}{\Delta(z_2) dx(z_2)}
-
\frac{G_{0, 2}(z_0, \overline{z_2}) G_{0, 2}(\overline{z_2}, z_1)}{\Delta(z_2) dx(z_2)}.
\notag
\end{align*}
Next we compute the residue at $\beta \in B$.
By Proposition \ref{prop:G-properties-01} (ii) and
Proposition \ref{prop:G-properties-02} (i),
we have the Laurent expansion as
\begin{equation}
G_{0, 2}(\xi, z_1) G_{0, 2}(\overline{\xi}, z_2)
= \frac{\nu_{\beta} \nu_{\overline{\beta}}}{(\xi - \beta)^2}
\bigl(1 +O(\xi - \beta) \bigr)  (d\xi)^2
\nonumber
\end{equation}
near $\xi = \beta$
(note that $\overline{\beta} \in B$ since $B$ is closed by the conjugate map;
see Remark \ref{rem:MainThm} (iii)).
Hence, if $\Delta(\xi) dx(\xi)$ has a double or higher order pole at
$\xi = \beta$, then
\begin{equation}
\label{eq:prop:recursion-01:tmp1}
\frac{G_{0, 2}(z, \xi) G_{0, 2}(\xi, z_1)
G_{0, 2}(\overline{\xi}, z_2)}{\Delta(\xi) dx(\xi)}
\end{equation}
is holomorphic at $\xi = \beta$ and its residue is zero.
If $\Delta(\xi) dx(\xi)$ has a simple pole at $\beta$,
i.e., if $\beta \in B_1$ (cf. Remark \ref{rem:MainThm} (ii)), then
\begin{align}
\eqref{eq:prop:recursion-01:tmp1}
&=
\frac{\nu_{\beta} \nu_{\overline{\beta}}}{C_{\beta}}
G_{0, 2}(z, \beta)
\left(
\frac{1}{\xi - \beta} + O(1)
\right) d\xi
\notag
\end{align}
when $\xi \to \beta$.
Hence we obtain
\begin{align}
\Res_{\xi = \beta}\left[
\frac{G_{0, 2}(z, \xi) G_{0, 2}(\xi, z_1) G_{0, 2}
(\overline{\xi}, z_2)}{\Delta(\xi) dx(\xi)}\right]
=
\begin{cases}
 0 &  (\beta\not\in B_1),\\[2ex]
\dfrac{\nu_{\beta} \nu_{\overline{\beta}}}{C_{\beta}}
G_{0, 2}(z, \beta) & (\beta \in B_1).
\end{cases}
\end{align}
Then, the desired equality \eqref{eq:prop:recursion-01} follows from
\begin{equation} \label{eq:residue-at-beta-relation}
\sum_{\beta \in B_1} \dfrac{\nu_{\beta} \nu_{\overline{\beta}}}{C_{\beta}}
G_{0, 2}(z, \beta) =
\sum_{\beta \in B_1} 
\dfrac{\nu_{\beta} \nu_{\overline{\beta}}}{C_{\overline{\beta}}}
G_{0, 2}(z, \overline{\beta})
\end{equation}
(cf. Remark \ref{rem:MainThm} (iii)) and
\begin{equation} \label{eq:C-beta-bar}
C_{\overline{\beta}}
= \Res_{z = \overline{\beta}} \Delta(z) dx(z)
= \Res_{z = \beta} \Delta(\overline{z}) dx(\overline{z})
= - \Res_{z = \beta}  \Delta(z) dx(z)
= - C_{\beta} \quad (\beta \in B).
\end{equation}
This completes the proof.
\end{proof}

\begin{proof}[Proof of Proposition \ref{prop:recursion-02}]
The topological recursion gives
\begin{align}
 G_{g, n + 1}(z_0, z_{I})
&=
\sum_{r \in R^*}
\Res_{\xi = r}
\left[
f_1(\xi; z_0, z_I) + f_2(\xi; z_0, z_I) + f_3(\xi; z_0, z_I)
\right].
\end{align}
Here
\begin{align}
f_1(\xi; z_0, z_I)
&=
\frac{G_{0, 2}(z_0, \xi)}{\Delta (\xi) dx(\xi)}
\int_{\zeta_1 \in D(z_1)} \cdots \int_{\zeta_n \in D(z_n)}
W_{g-1, n + 2} (\xi, \overline{\xi}, \zeta_I),
\\
f_2(\xi; z_0, z_I)
&=
\frac{G_{0, 2}(z_0, \xi)}{\Delta(\xi) dx (\xi)}
\sum_{j = 1}^n
\left\{
G_{0, 2} (\xi, z_j) G_{g, n}(\overline{\xi}, z_{I \setminus \{j\}})
+
G_{0, 2} (\overline{\xi}, z_j) G_{g, n}(\xi, z_{I \setminus \{j\}})
\right\},
\\
f_3(\xi; z_0, z_I)
&=
\frac{G_{0, 2}(z_0, \xi)}{\Delta(\xi) dx (\xi)}
\sum_{\substack{g_1 + g_2 = g, \\ I_1 \sqcup I_2 = I}}''
G_{g_1, |I_1| + 1}(\xi, z_{I_1})
G_{g_2, |I_2| + 1}(\overline{\xi}, z_{I_2}),
\end{align}
where $\sum''$ in $f_3(\xi; z_0, z_I)$
means that we take the sum for
\begin{equation}
 2g_1 + |I_1| \geq 2 \quad\text{and}\quad 2g_2 + |I_2| \geq 2.
\end{equation}

Because $f_1(\xi; z_0, z_I)$ and $f_3(\xi; z_0, z_I)$ are holomorphic
except for $R^{*}  \cup \{z_0\}$, the residue theorem gives
\begin{align}
&\sum_{r \in R^*}
\Res_{\xi = r}
\left[
f_1(\xi; z_0, z_I) + f_3(\xi; z_0, z_I)
\right]
\\
&\qquad
=
\frac{1}{\Delta (z_0) dx(z_0)}
\int_{\zeta_1 \in D(z_1)} \cdots \int_{\zeta_n \in D(z_n)}
W_{g-1, n + 2} (z_0, \overline{z_0}, \zeta_I)
\notag\\
&\qquad\qquad
+
\frac{1}{\Delta(z_0) dx (z_0)}
\sum_{\substack{g_1 + g_2 = g, \\ I_1 \sqcup I_2 = I}}''
G_{g_1, |I_1| + 1}(z_0, z_{I_1})
G_{g_2, |I_2| + 1}(\overline{z_0}, z_{I_2}).
\notag
\end{align}

Although $G_{0, 2}(\xi, z_j)$ has a simple pole at $\xi = \beta \in B$,
$f_2(\xi; z_0, z_I)$ is holomorphic at $\xi = \beta$ because
$\Delta(\xi) dx(\xi)$ has a pole at $\beta$
(cf. Remark \ref{rem:MainThm} (ii)).
It is also holomorphic near $\xi = \overline{\beta}$ with $\beta \in B$.
Hence the singular points of $f_2(\xi; z_0, z_I)$ are contained in
$ R^{*} \cup \{z_0\} \cup \{z_j, \overline{z_j}\}_{j = 1}^n$.
Then, by the residue theorem we obtain
\begin{align}
&\sum_{r \in R^*}
\Res_{\xi = r}
f_2(\xi; z_0, z_I)
\\
&\qquad
=
\frac{1}{\Delta(z_0) dx (z_0)}
\sum_{j = 1}^n
\left\{
G_{0, 2} (z_0, z_j) G_{g, n}(\overline{z_0}, z_{I \setminus \{j\}})
+
G_{0, 2} (\overline{z_0}, z_j) G_{g, n}(z_0, z_{I \setminus \{j\}})
\right\}
\notag\\
&\qquad\qquad
-
\sum_{j = 1}^n
\frac{G_{0, 2}(z_0, z_j)}{\Delta(z_j) dx (z_j)}
G_{g, n}(\overline{z_j}, z_{I \setminus \{j\}})
+
\sum_{j = 1}^n
\frac{G_{0, 2}(z_0, \overline{z_j})}{\Delta(z_j) dx (z_j)}
G_{g, n}(\overline{z_j}, z_{I \setminus \{j\}}).
\notag
\end{align}
Here the first term comes from the residue at $z_0$,
the second term from the residue at $z_j$,
and the third term from the residue at $\overline{z_j}$.
In computing the residue at $\overline{z_j}$, we have used the relation
$\Delta(\overline{z_j}) dx(\overline{z_j})
= - \Delta(z_j) dx(z_j)$.
\end{proof}

Let us consider the diagonal specialization of $G_{g, n}$.
For the purpose, we define
\begin{align}
\label{def:H_gn}
H_{g, n}(z)
:=\frac{1}{(n-1)!} \, G_{g, n}(z_0, z_1, \cdots, z_{n-1})
\Big|_{z_0 = z_1 = \cdots = z_{n-1} = z}.
\end{align}
This is a meromorphic differential in $z$.

\begin{prop}
\label{prop:recursion-H-01}
\begin{align}
\label{eq:prop:recursion-H-00}
H_{1, 1}(z)
&= \frac{W_{0, 2}(z, \overline{z})}{\Delta(z) dx(z)},
\\
\label{eq:prop:recursion-H-01}
H_{0, 3}(z)
&=
\left.
\frac{\partial}{\partial z_0}
\left(
\frac{G_{0, 2}(\overline{z_0}, z)}{\Delta (z_0) x'(z_0)}
\right)\right|_{z_0 = z}
+ \frac{G_{0, 2}(z, \overline{z}) G_{0, 2}(\overline{z}, z)}{\Delta (z) dx(z)}
\\
&\qquad
- \sum_{\beta \in B_1} \frac{\nu_{\beta}
 \nu_{\overline{\beta}}}{2 C_{\beta}}
\left(
G_{0, 2}(z, \beta) - G_{0, 2}(z, \overline{\beta})
\right),
\notag\\
\label{eq:prop:recursion-H-02}
H_{g, n + 1}(z)
&=
- \frac{1}{\Delta(z){x'(z)}
} \left[
\frac{\partial}{\partial z} H_{g-1, n + 2}(z)
-
\left.
\frac{\partial}{\partial z_0}
\left(\frac{G_{g-1, n + 2}(z_0, z, \cdots, z)}{(n + 1)!}\right)
\right|_{z_0 = z}
\right]
\\
&\qquad
- \frac{d}{d z}
\left(
\frac{1}{\Delta (z) x'(z)}
\right)
H_{g, n}(z)
- \frac{1}{ \Delta (z) x'(z)}
\frac{\partial}{\partial z_0}
\left.
\left(
\frac{G_{g, n}(z_0, z, \cdots, z)}{(n - 1)!}\right)
\right|_{z_0 = z}
\notag\\
&\qquad
+ \frac{G_{0, 2}(\overline{z}, z) - G_{0, 2}(z, \overline{z})}{\Delta (z) dx (z)}
H_{g, n}(z)
\notag
\\
&\qquad
- \frac{1}{\Delta(z) dx(z)}
\sum_{\substack{g_1 + g_2 = g, \\ n_1 + n_2 = n, \\ 2 g_1 + n_1 \geq 2,
 \\ 2g_2 + n_2 \geq 2}} H_{g_1, n_1 + 1}(z) H_{g_2, n_2 + 1}(z)
\qquad (2g + n \geq 3).
\notag
\end{align}
Here and in what follows, we use the following convention:
For a meromorphic differential $f(z) dz$, its $z$-derivative means
\[
\frac{\partial}{\partial z} \left( f(z) dz \right) := \frac{\partial f}{\partial z}(z) dz.
\]
\end{prop}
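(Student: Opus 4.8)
The plan is to obtain each identity by the \emph{diagonal specialization} $z_0 = z_1 = \cdots = z_{n-1} = z$ of the recurrence relations for $\{G_{g,n}\}$ established in Propositions \ref{prop:recursion-01} and \ref{prop:recursion-02}, combined with the normalizing factor $1/(n-1)!$ in the definition \eqref{def:H_gn} of $H_{g,n}$. The identity \eqref{eq:prop:recursion-H-00} for $H_{1,1}$ is immediate: since $H_{1,1}(z) = G_{1,1}(z)$ and $B(z_0,\overline{z_0}) = W_{0,2}(z_0,\overline{z_0})$, it follows at once from \eqref{eq:prop:recursion-00}.

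For \eqref{eq:prop:recursion-H-01} I would specialize the right-hand side of \eqref{eq:prop:recursion-01}. The only delicate point is that the two terms of the form $G_{0,2}(z_0,z_j)\{A(z_0) - A(z_j)\}$, with $A(w) = G_{0,2}(\overline{w}, z)/(\Delta(w)\,dx(w))$, become indeterminate $0 \times \infty$ expressions on the diagonal, because $G_{0,2}(z_0,z_j)$ has a simple pole along $z_0 = z_j$ while the bracket vanishes there. Writing $G_{0,2}(z_0,z_1) = \bigl(1/(z_0-z_1) - \sum_{\beta}\nu_\beta/(z_0-\beta)\bigr)dz_0$ from \eqref{eq:G_02} and Taylor-expanding $A(z_0) - A(z_1)$ in $z_1 - z_0$, the limit as $z_1 \to z_0$ equals $A'(z_0)\,dz_0$; the regular part of $G_{0,2}$ contributes nothing since it multiplies a vanishing bracket. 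This produces exactly the derivative term $\partial_{z_0}\bigl(G_{0,2}(\overline{z_0},z)/(\Delta(z_0)x'(z_0))\bigr)|_{z_0=z}$. The remaining (regular) terms of \eqref{eq:prop:recursion-01} specialize verbatim, and the factor $1/2!$ acts on the two symmetric copies to give coefficient $1$ while leaving the single $B_1$-sum halved, yielding the coefficient $1/(2C_\beta)$.

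The general case \eqref{eq:prop:recursion-H-02} is the heart of the proof, obtained by specializing \eqref{eq:prop:recursion-02}. First, the full symmetry of $W_{g,n}$ (Theorem \ref{thm:TRprop}(i)), hence of $G_{g,n}$ in its integrated arguments, makes all $n$ summands of $\sum_{j=1}^n$ coincide on the diagonal, so that $\sum_{j=1}^n$ contributes $n$ equal copies and the prefactors $n/n! = 1/(n-1)!$ reassemble the $H_{g,n}$ normalization. The conjugation identity $G_{g,n}(\overline{z_0}, \cdots) = -G_{g,n}(z_0, \cdots)$ of Proposition \ref{prop:G-properties-02}(ii) is then used to convert conjugated arguments into ordinary ones; applied to the third line of \eqref{eq:prop:recursion-02} this produces the term $\bigl(G_{0,2}(\overline{z},z) - G_{0,2}(z,\overline{z})\bigr)H_{g,n}(z)/(\Delta(z)dx(z))$. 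The indeterminate limit of the second line of \eqref{eq:prop:recursion-02}, treated exactly as in the $H_{0,3}$ case, yields $A'(z_0)|_{z_0=z}$ with $A(w) = G_{g,n}(\overline{w}, \cdots)/(\Delta(w)dx(w))$; differentiating this product by the Leibniz rule separates the derivative of the scalar prefactor $1/(\Delta(w)x'(w))$ — which becomes the term $-\tfrac{d}{dz}\bigl(1/(\Delta(z)x'(z))\bigr)H_{g,n}(z)$ — from the derivative of $G_{g,n}$ in its first slot, giving the $\partial_{z_0}$ term on the third line of \eqref{eq:prop:recursion-H-02}.

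The subtlest bookkeeping concerns the $W_{g-1,n+2}(z_0,\overline{z_0},\zeta_I)$ term, where the variable $z_0$ occupies two slots simultaneously (directly and through $\overline{z_0}$). Upon specialization this is not the full diagonal of $G_{g-1,n+2}$, so I would recover it by writing the total $z$-derivative of $H_{g-1,n+2}(z) = G_{g-1,n+2}(z,\dots,z)/(n+1)!$ as a sum of partial derivatives over all $n+2$ arguments (chain rule) and subtracting the first-slot derivative $\partial_{z_0}\bigl(G_{g-1,n+2}(z_0,z,\dots,z)/(n+1)!\bigr)|_{z_0=z}$; this isolates precisely the contribution of the second ($\overline{z_0}$) slot and reproduces the bracket $\partial_z H_{g-1,n+2} - \partial_{z_0}(\cdots)$ in \eqref{eq:prop:recursion-H-02}. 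The quadratic term specializes directly into $\sum H_{g_1,n_1+1}H_{g_2,n_2+1}$ after the same factorial rearrangement. I expect the main obstacle to be neither conceptual nor the singular limits themselves, but rather the disciplined tracking of signs (via $C_{\overline{\beta}} = -C_\beta$ and the conjugation antisymmetry) and of the multinomial factors, making sure that each $1/n!$, $1/(n-1)!$ and $1/(n+1)!$ lands on its correct term.
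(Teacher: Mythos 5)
Your overall strategy --- diagonal specialization of Propositions \ref{prop:recursion-01} and \ref{prop:recursion-02}, Taylor expansion to resolve the $0\times\infty$ limits along the diagonal, symmetry of $W_{g,n}$ to collapse the $j$-sums, and the factorial bookkeeping that reassembles the $H_{g,n}$ normalizations --- is exactly the paper's proof, and your treatment of \eqref{eq:prop:recursion-H-00}, of \eqref{eq:prop:recursion-H-01}, and of the second and third lines of \eqref{eq:prop:recursion-02} (Leibniz rule plus the antisymmetry $G_{g,n}(\overline{z_0},\cdots)=-G_{g,n}(z_0,\cdots)$) is correct.

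The one step that does not go through as you describe it is the $W_{g-1,n+2}(z_0,\overline{z_0},\zeta_I)$ term. The chain-rule identity you propose,
\[
\frac{\partial}{\partial z} H_{g-1,n+2}(z)
\;-\;
\left.\frac{\partial}{\partial z_0}\left(\frac{G_{g-1,n+2}(z_0,z,\cdots,z)}{(n+1)!}\right)\right|_{z_0=z}
\;=\;
\frac{1}{n!}\int_{\zeta_1\in D(z)}\cdots\int_{\zeta_n\in D(z)} W_{g-1,n+2}(z,z,\zeta_I),
\]
isolates the contributions of the $n+1$ \emph{integrated} slots, each of which evaluates the integrand at $\zeta_j=z$; it therefore produces $W_{g-1,n+2}$ with \emph{both} first arguments equal to $z$, not "the contribution of the $\overline{z_0}$ slot" --- indeed $H_{g-1,n+2}$ and $G_{g-1,n+2}$ have no conjugated slot at all, so that phrase does not parse. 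The quantity you actually need has second argument $\overline{z}$, and the bridge between the two is Theorem \ref{thm:TRprop} (iii): since $2(g-1)+(n+2)=2g+n\ge 3>2$, one has $W_{g-1,n+2}(z,\overline{z},\zeta_I)=-W_{g-1,n+2}(z,z,\zeta_I)$. This is precisely the relation the paper invokes alongside the chain rule, and it is what produces the overall factor $-1/(\Delta(z)x'(z))$ in front of the bracket in \eqref{eq:prop:recursion-H-02}. Followed literally, your bookkeeping equates the $\overline{z}$-term with the bracket directly and hence yields that term with the wrong sign. Since you already list the conjugation antisymmetry among your tools, the repair is one line, but as written the step is a genuine (if small) gap.
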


\begin{proof}
Because $G_{0, 2}(z_j, z_k)$ is singular along $z_j = z_k$,
we need a careful treatment for the terms which contain it.
For example, the first term in the right-hand side of
\eqref{eq:prop:recursion-01} becomes
\begin{align}
&\lim_{z_0, z_1, z_2 \rightarrow z}
\left[
G_{0, 2}(z_0, z_1)
\left(
\frac{G_{0, 2}(\overline{z_0}, z_2)}{\Delta(z_0) dx(z_0)}
-
\frac{G_{0, 2}(\overline{z_1}, z_2)}{\Delta(z_1) dx(z_1)}
\right) \right]
\\
&\quad
=
\lim_{z_0, z_1, z_2 \rightarrow z}
\left[
\left(
\frac{dz_0}{z_0 - z_1}
+ (\text{holomorphic along $z_0 = z_1$})
\right)
\times
\left(
\frac{G_{0, 2}(\overline{z_0}, z_2)}{\Delta(z_0) dx(z_0)}
-
\frac{G_{0, 2}(\overline{z_1}, z_2)}{\Delta(z_1) dx(z_1)}
\right) \right]
\notag\\
&\quad
= \left.
\frac{\partial}{\partial z_0}
\left(
\frac{G_{0, 2}(\overline{z_0}, z)}{\Delta(z_0) x'(z_0)}
\right)\right|_{z_0 = z}
\notag
\end{align}
after the diagonal specialization.
In the same manner, we can compute the third term in the right-hand side
of \eqref{eq:prop:recursion-01}, and obtain the same result.
This proves \eqref{eq:prop:recursion-H-01}.
We use the same computation to obtain  \eqref{eq:prop:recursion-H-02}
together with Theorem \ref{thm:TRprop} (iii) and the relation
\begin{align}
&
\frac{1}{dx(z)}
\int_{\zeta_1 \in D(z)} \cdots \int_{\zeta_{n} \in D(z)}
\frac{W_{g-1, n+2}(z, \overline{z}, \zeta_{I})}{n!}
\\
&\qquad\quad
=
- \frac{1}{dx(z)} \int_{\zeta_1 \in D(z)} \cdots \int_{\zeta_{n} \in D(z)}
\frac{W_{g-1, n+2}(z, z, \zeta_{I})}{n!}
\notag\\
&\qquad\quad
=
\frac{1}{x'(z)} \left\{ - \frac{\partial}{\partial z} H_{g-1, n+2}(z)
+ \left.\frac{\partial}{\partial z_0}
\left( \frac{G_{g-1, n+2} (z_0, z, \cdots, z)}{(n+1)!}\right)\right|_{z_0 = z}
\right\}. \notag
\end{align}
\end{proof}

\begin{prop}
\label{prop:That}
\begin{align}
\label{eq:prop:That:0}
 \hat{T}_0(z) dz
&= -G_{0, 2}(\overline{z}, z), \\
\label{eq:prop:That:1}
\hat{T}_1(z)
&=
- \frac{1}{\Delta(z) x'(z)}\frac{d \hat{T}_0}{dz}(z)
- \frac{\partial}{\partial z} \left(\frac{1}{\Delta(z) x'(z)}\right) \hat{T}_0(z)
\\
&\qquad
- \frac{G_{0, 2}(z, \overline{z}) + G_{0, 2}(\overline{z}, z)}{\Delta (z)dx (z)} \hat{T}_0(z)
- \frac{1}{\Delta(z) x'(z)} {\hat{T}_0(z)}^2
\notag\\
&\qquad
- \sum_{\beta  \in B_1}
\frac{\nu_{\beta} \nu_{\overline{\beta}}}{2 C_{\beta}}
\frac{G_{0, 2}(z, \beta) - G_{0, 2}(z, \overline{\beta})}{dz},
\notag
\\
\label{eq:prop:That:m+1}
 \hat{T}_{m + 1}(z)
&= - \frac{1}{\Delta(z) x'(z)} \frac{d \hat{T}_{m}}{dz}(z)
- \frac{\partial}{\partial z}\left(\frac{1}{\Delta(z)x'(z)}\right) \hat{T}_m(z)
\\
&\qquad
- \frac{G_{0, 2}(\overline{z}, z) + G_{0, 2}(z, \overline{z})}{\Delta(z) dx(z)} \hat{T}_m(z)
- \frac{1}{\Delta(z)x'(z)}
\sum_{j =0}^m \hat{T}_{m-j}(z) \hat{T}_j(z)
\quad (m \geq 1).
\notag
\end{align}
\end{prop}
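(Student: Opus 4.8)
The plan is to feed the diagonal recurrences of Proposition \ref{prop:recursion-H-01} into the expansion of $\hat{T}_m$ in terms of the $H_{g,n}$ and to reorganise the result. By \eqref{eq:That_m} and \eqref{def:H_gn} we have, for $m\geq 1$,
\begin{equation}
\hat{T}_m(z)\,dz=\sum_{\substack{2g+n-2=m\\ g\geq 0,\,n\geq 1}}H_{g,n}(z),
\end{equation}
while $\hat{T}_0(z)\,dz$ is the $m=0$ line of \eqref{eq:That_m}. I would dispose of \eqref{eq:prop:That:0} first: applying Theorem \ref{thm:TRprop}(iii) with $(g,n)=(0,2)$ gives $W_{0,2}(z,\zeta)-dx(z)\,dx(\zeta)/(x(z)-x(\zeta))^2=-W_{0,2}(\overline{z},\zeta)$, so integrating over $\zeta\in D(z)$ and recalling the definition of $G_{0,2}$ yields $\hat{T}_0(z)\,dz=-G_{0,2}(\overline{z},z)$ at once.

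Next I would establish \eqref{eq:prop:That:1} by writing $\hat{T}_1(z)\,dz=H_{1,1}(z)+H_{0,3}(z)$ and substituting \eqref{eq:prop:recursion-H-00}--\eqref{eq:prop:recursion-H-01}. The diagonal-derivative term $\partial_{z_0}\bigl(G_{0,2}(\overline{z_0},z)/(\Delta(z_0)x'(z_0))\bigr)\big|_{z_0=z}$ is expanded by the Leibniz rule; writing $G_{0,2}(\overline{z_0},z)=g(z_0,z)\,dz_0$ and using $\hat{T}_0(z)=-g(z,z)$ from \eqref{eq:prop:That:0}, the part where the derivative lands on $1/(\Delta x')$ reproduces $-\partial_z(1/(\Delta x'))\,\hat{T}_0$, while the part landing on $g$ gives $(1/(\Delta x'))\,\partial_{z_0}g(z,z)$. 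The latter differs from $-(1/(\Delta x'))\,d\hat{T}_0/dz$ by the omitted second-slot derivative $(1/(\Delta x'))\,\partial_{z_1}g(z,z)$; crucially this discrepancy is cancelled by $H_{1,1}=W_{0,2}(z,\overline{z})/(\Delta\,dx)$, both being equal, up to sign, to $(d\overline{z}/dz)/((z-\overline{z})^2\Delta x')$. Finally the product term $G_{0,2}(z,\overline{z})\,G_{0,2}(\overline{z},z)/(\Delta\,dx)$ and the $B_1$-sum match the corresponding terms of \eqref{eq:prop:That:1}, after the elementary identity $ab=a(a+b)-a^2$ (with $a,b$ the coefficients of $G_{0,2}(\overline{z},z)$ and $G_{0,2}(z,\overline{z})$, and $\hat{T}_0=-a$) turns the quadratic piece into $-(G_{0,2}(z,\overline{z})+G_{0,2}(\overline{z},z))\hat{T}_0/(\Delta\,dx)-\hat{T}_0^{\,2}/(\Delta x')$.

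For the general relation \eqref{eq:prop:That:m+1} with $m\geq 1$, I would sum \eqref{eq:prop:recursion-H-02} over all $(g,n)$ with $2g+n=m+2$, so that the left-hand sides $H_{g,n+1}$ exhaust the summands of $\hat{T}_{m+1}(z)\,dz$; note that all the $H_{g,n}$, $H_{g-1,n+2}$ and $G_{g,n}$, $G_{g-1,n+2}$ occurring on the right then also have total index $m+2$, hence sum into objects built from $\hat{T}_m$. The term $-\partial_z(1/(\Delta x'))H_{g,n}$ sums to $-\partial_z(1/(\Delta x'))\hat{T}_m$. The first-slot-derivative term from $G_{g,n}$ and the bracket $\partial_z H_{g-1,n+2}-\partial_{z_0}(\cdots)|_{z_0=z}$ from $G_{g-1,n+2}$ (which, by the chain rule on the diagonal, equals the sum of the derivatives in the remaining slots) assemble into the single total derivative $-(1/(\Delta x'))\,d\hat{T}_m/dz$, just as for $m=1$. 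The quadratic sum specialises to the \emph{restricted} convolution $-(1/(\Delta x'))\sum_{j=1}^{m-1}\hat{T}_{m-j}\hat{T}_j$, the constraints $2g_i+n_i\geq 2$ forcing each factor to carry index $\geq 1$. Finally, summing the linear term $(G_{0,2}(\overline{z},z)-G_{0,2}(z,\overline{z}))H_{g,n}/(\Delta\,dx)$ reproduces $\hat{T}_m$, and after substituting $G_{0,2}(\overline{z},z)=-\hat{T}_0\,dz$ the two endpoints $j=0,m$ missing from the convolution are exactly accounted for, yielding the full $-(G_{0,2}(\overline{z},z)+G_{0,2}(z,\overline{z}))\hat{T}_m/(\Delta\,dx)-(1/(\Delta x'))\sum_{j=0}^{m}\hat{T}_{m-j}\hat{T}_j$ of \eqref{eq:prop:That:m+1}.

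The main obstacle is the combinatorial bookkeeping of the diagonal specialisation in the derivative and quadratic terms. Two points require care: first, since $G_{g,n}$ is a $1$-form only in its first argument, the total $z$-derivative of its diagonal splits into the first-slot derivative (provided by the $G_{g,n}$ term) plus the symmetric contribution of the remaining slots (provided by the $\partial_z H_{g-1,n+2}$ term, equivalently by $H_{1,1}$ in the base case), and one must verify that these assemble into $d\hat{T}_m/dz$ with the correct factorial weights; second, the range restriction $2g_i+n_i\geq 2$ truncates the convolution to $1\leq j\leq m-1$, so that the $\hat{T}_0$ endpoints must be supplied by the linear $G_{0,2}(\overline{z},z)$ term rather than by the quadratic term, with all signs tracked through $\hat{T}_0=-G_{0,2}(\overline{z},z)/dz$. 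The $z_j=z_k$ singularities of $G_{0,2}$ in the diagonal limit, already controlled in the proof of Proposition \ref{prop:recursion-H-01}, must also be invoked to justify the term-by-term passage to the limit.
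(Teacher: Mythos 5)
Your proposal is correct and follows essentially the same route as the paper: \eqref{eq:prop:That:0} from Theorem \ref{thm:TRprop}(iii), then \eqref{eq:prop:That:1} by summing $H_{1,1}+H_{0,3}$ via Proposition \ref{prop:recursion-H-01} (with $H_{1,1}$ supplying the missing second-slot derivative so that the first-slot term completes to $d\hat{T}_0/dz$, and the $ab=a(a+b)-a^2$ rearrangement of the quadratic piece), and \eqref{eq:prop:That:m+1} by summing Proposition \ref{prop:recursion-H-01}'s general recursion over $2g+n-2=m$, assembling the slot derivatives into the total derivative of $\hat{T}_m$ and extending the restricted convolution to $\sum_{j=0}^{m}$ by absorbing the $j=0,m$ endpoints into the linear $G_{0,2}$ term via $\hat{T}_0=-G_{0,2}(\overline{z},z)/dz$. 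This matches the paper's proof step for step, including the sign bookkeeping; the paper merely packages your ``chain rule on the diagonal'' argument as an explicit index-shift/telescoping identity (with an even/odd $m$ case distinction for the $W_{g,1}$ term).
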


\begin{proof}
It follows from the definition \eqref{eq:That_m} of
$\hat{T}_m(z)$ and Theorem \ref{thm:TRprop} (iii) that
\begin{equation}
\hat{T}_0(z) dz =
\int_{\zeta \in D(z)}
\left(
W_{0, 2}(z, \zeta)
- \frac{dx(z)dx(\zeta)}{(x(z) - x(\zeta))^2}
\right)
= - \int_{\zeta \in D(z)} W_{0, 2}(\overline{z}, \zeta)
= - G_{0, 2}(\overline{z}, z).
\notag
\end{equation}
Thus we obtain \eqref{eq:prop:That:0}.

We may write $\hat{T}_m$ for $m \ge 1$ as
\begin{equation}
\label{eq:prop:That:tmp1}
\hat{T}_m(z) dz
= \sum_{\substack{2g + n - 2 = m, \\ g \geq 0, n \geq 1}}H_{g, n}(z).
\end{equation}
Then, by using Proposition \ref{prop:recursion-H-01},
we find
\begin{align}
\hat{T}_1(z)  dz
& =  H_{0, 3}(z) + H_{1, 1}(z)
\\
& =
\left.
\frac{\partial}{\partial z_0}
\left(
\frac{G_{0, 2}(\overline{z_0}, z)}{\Delta (z_0) x'(z_0)}
\right)\right|_{z_0 = z}
+ \frac{G_{0, 2}(z, \overline{z}) G_{0, 2}(\overline{z}, z)}{\Delta (z) dx(z)}
 \notag\\
 & \qquad
 - \sum_{\beta \in B_1} \frac{\nu_{\beta}
 \nu_{\overline{\beta}}}{2C_{\beta}} 
 \left( G_{0, 2}(z, \beta) - G_{0, 2}(z, \overline{\beta}) \right)
+ \frac{W_{0, 2}(z, \overline{z})}{\Delta(z) dx(z)}.
 \notag
\end{align}
Because
\[
W_{0, 2}(z, \overline{z})
= W_{0, 2}(\overline{z}, z)
= \left. \frac{\partial}{\partial z_1}
\left(G_{0, 2}(\overline{z}, z_1)\right) \right|_{z_1 = z} dz
\]
holds, we conclude that
\begin{align}
\hat{T}_1(z)  dz
&=
\frac{\partial}{\partial z}
\left(\frac{1}{\Delta (z) x'(z)}\right)
G_{0, 2}(\overline{z}, z)
+
\frac{1}{\Delta (z) x'(z)}
\frac{\partial}{\partial z}
\left(
G_{0, 2}(\overline{z}, z)
\right)
\\
&\qquad\quad
+ \frac{G_{0, 2}(z, \overline{z}) G_{0, 2}(\overline{z}, z)}{\Delta (z) dx(z)}
- \sum_{\beta \in B_1} \frac{\nu_{\beta}
 \nu_{\overline{\beta}}}{2C_{\beta}} 
 \left( G_{0, 2}(z, \beta) - G_{0, 2}(z, \overline{\beta}) \right)
\notag
\\
&=
\frac{\partial}{\partial z}
\left(\frac{1}{\Delta (z) x'(z)}\right)
G_{0, 2}(\overline{z}, z)
+
\frac{1}{\Delta (z) x'(z)}
\frac{\partial}{\partial z}
\left(
G_{0, 2}(\overline{z}, z)
\right)
\notag\\
&\qquad\quad
+ \frac{G_{0, 2}(z, \overline{z}) 
+ G_{0, 2}(\overline{z}, z)}{\Delta (z) dx(z)}G_{0, 2}(\overline{z}, z)
- \frac{G_{0, 2}(\overline{z}, z)^2}{\Delta (z) dx(z)}
\notag\\
&\qquad\quad
- \sum_{\beta \in B_1} \frac{\nu_{\beta}
 \nu_{\overline{\beta}}}{2C_{\beta}} 
 \left( G_{0, 2}(z, \beta) - G_{0, 2}(z, \overline{\beta}) \right).
\notag
\end{align}
Then, the equality \eqref{eq:prop:That:1} follows from this equality
together with \eqref{eq:prop:That:0}.

Similarly, for $m \geq 1$, \eqref{eq:That_m} gives
\begin{align}
\hat{T}_{m + 1}(z)  dz
&= \sum_{\substack{2g + n -2 = m + 1, \\ g \geq 0, n \geq 1} }
H_{g, n}(z)
= \sum_{\substack{2g + n -2 = m , \\ g \geq 0, n \geq 0} }
H_{g, n + 1}(z)
\\
&=
- \frac{1}{\Delta(z) x'(z) }
\frac{\partial }{\partial z_0}
\left(
\sum_{\substack{2g + n -2 = m , \\ g \geq 0, n \geq 0} }
H_{g-1, n+2}(z_0)
\right.
\notag\\
&
\left. \left.
-
\sum_{\substack{2g + n -2 = m , \\ g \geq 0, n \geq 0} }
\frac{G_{g-1, n + 2}(z_0, z, \cdots, z)}{(n+1)!}
+
\sum_{\substack{2g + n -2 = m , \\ g \geq 0, n \geq 0} }
\frac{G_{g, n}(z_0, z, \cdots, z)}{(n-1)!}
\right)
\right|_{z_0 = z}
\notag\\
&\qquad
-\frac{d}{dz} \left(\frac{1}{\Delta(z)x'(z)}\right) \hat{T}_m(z) dz
+ \frac{G_{0, 2}(\overline{z}, z) - G_{0, 2}(z, \overline{z})}{\Delta(z) dx(z)}
\hat{T}_m(z) dz
\notag\\
&\qquad
- \frac{1}{\Delta(z) dx(z)}
\sum_{\substack{2g + n -2 = m , \\ g \geq 0, n \geq 0} }
\sum_{\substack{g_1 + g_2 = g, \\ n_1 + n_2 = n, \\ 2 g_1 + n_1 \geq 2,
 \\ 2g_2 + n_2 \geq 2}} H_{g_1, n_1 + 1}(z) H_{g_2, n_2 + 1}(z).
\notag
\end{align}
From this expression we obtain \eqref{eq:prop:That:m+1},
because we can compute as
\begin{align}
&\sum_{\substack{2g + n -2 = m , \\ g \geq 0, n \geq 0} }
H_{g-1, n+2}(z_0) +
\sum_{\substack{2g + n -2 = m , \\ g \geq 0, n \geq 0} }
\left(
\frac{G_{g, n}(z_0, z, \cdots, z)}{(n-1)!}
-
\frac{G_{g-1, n + 2}(z_0, z, \cdots, z)}{(n+1)!}
\right)
\\
&\qquad
=
\begin{cases}
\displaystyle
\sum_{\substack{2g + n -2 = m , \\ g \geq 0, n \geq 2} } H_{g,n}(z_0)
& \text{if $m$ is even}  \\[+2.em]
\displaystyle
\sum_{\substack{2g + n -2 = m , \\ g \geq 0, n \geq 2} } H_{g,n}(z_0)
 + G_{\frac{m+1}{2},1}(z_0)
& \text{if $m$ is odd}
\end{cases}
\notag
\\
&\qquad
= \hat{T}_{m}(z_0) dz_0,
\notag
\end{align}
and
\begin{align}
\frac{1}{dx(z)}
\sum_{\substack{2g + n -2 = m , \\ g \geq 0, n \geq 0} }
\sum_{\substack{g_1 + g_2 = g, \\ n_1 + n_2 = n, \\ 2 g_1 + n_1 \geq 2,
 \\ 2g_2 + n_2 \geq 2}} H_{g_1, n_1 + 1}(z) H_{g_2, n_2 + 1}(z)
&=
\frac{1}{x'(z)}
\sum_{\substack{m_1 + m_2 = m, \\ m_1, m_2 \geq 1 }}
\hat{T}_{m_1}(z) \hat{T}_{m_2}(z)
dz
 \\
&=
\frac{1}{x'(z)}
\left(
\sum_{j = 0}^m  \hat{T}_{m-j}(z) \hat{T}_j(z)
- 2 \hat{T}_0(z) \hat{T}_m(z)
\right) dz
\notag \\
&
=
\frac{1}{x'(z)} \sum_{j = 0}^m  \hat{T}_{m-j}(z) \hat{T}_j(z) dz
+ \frac{2 G_{0,2}(\overline{z},z)}{x'(z)} \hat{T}_m(z), \notag
\end{align}
where we set $m_j = 2g_j + (n_j + 1)-2$ for $j = 1, 2$
to rewrite summation.
\end{proof}

\subsection{Proof of Theorem \ref{thm:WKB-Wg,n}}

Now we give a proof of Theorem \ref{thm:WKB-Wg,n}. 
We will compare the recursive relations 
\eqref{eq:prop:That:0}--\eqref{eq:prop:That:m+1} satisfied by 
the functions $\{\hat{T}_m\}_{m \ge -1}$ to those 
\eqref{eq:Riccati-gen-2}--\eqref{eq:Riccati-gen-3} 
satisfied by the expansion coefficients of the WKB solutions 
(which will be denoted by $\{T_m\}_{m \ge -1}$ below)
after the coordinate change $x=x(z)$.

\medskip

{\bf\boldmath{Transformation of the Riccati equation 
\eqref{eq:Riccati-gen} to $z$-variable.}}\quad
Let $\psi(x,\hbar)$ be the WKB solution of $\hat{P} \psi = 0$,
where $\hat{P}$ is given by the formula \eqref{eq:quantization}
with the coefficients specified by 
\eqref{eq:leading-coeff-of-quantum-curve}--\eqref{eq:WKB-Wg,n:r2}.
Let $S(x, \hbar)$ is a logarithmic derivative of $\psi(x, \hbar)$,
and $T(z, \hbar)$ is that of $\psi(x(z), \hbar)$; namely,
\[
S(x(z), \hbar) = \left.\frac{d}{dx} \log \psi(x, \hbar)\right|_{x = x(z)}
= \frac{1}{x'(z)} \frac{d}{dz}\log \psi(x(z), \hbar)
= \frac{1}{x'(z)} T(z, \hbar).
\]
Since $S(x, \hbar)$ satisfies the Riccati equation \eqref{eq:Riccati-gen}
associated with \eqref{eq:quantization},
$T(z, \hbar)$ satisfies
\begin{equation}
\label{eq:Riccati-gen-z}
\hbar^2 \left\{ \frac{d}{dz} T + T^2 \right\}
+ \hbar \left\{ U(z, \hbar) - \hbar \, \frac{x''(z)}{x'(z)}\right\} T
+  V(z, \hbar) = 0,
\end{equation}
where ${}'$ designates derivative with respect to $z$, and we set
\begin{align}
U(z, \hbar) &= U_0(z) + \hbar U_1(z) :=
x'(z) \,  q(x(z), \hbar), \\
V(z, \hbar) &= V_0(z) + \hbar V_1(z) + \hbar^2 V_2(z) :=
(x'(z))^2 \, r(x(z), \hbar).
\end{align}
Hence
\begin{equation}
T(z, \hbar) = \frac{1}{\hbar} T_{-1}(z)  + T_0(z) + \hbar T_1(z) + \cdots
\end{equation}
satisfies \eqref{eq:Riccati-gen-z} if and only if
the following recurrence relations hold:
\begin{align}
\label{eq:Riccati-gen-z--1}
&{T_{-1}}^2 + U_0(z) T_{-1} + V_0(z) = 0,\\
\label{eq:Riccati-gen-z-m+1}
& \left\{ 2 T_{-1}(z) + U_0 (z) \right\} T_{m + 1}
+ \left\{U_1(z) - \frac{x''(z)}{x'(z)}\right\}   T_{m}
\\
&\qquad\qquad\qquad\qquad
+ \sum_{j = 0}^m T_{m-j} T_{j}
+ \frac{d}{dz} T_{m} + V_{m+2}(z) = 0 \quad (m \geq -1),
\notag
\end{align}
where we set $V_ m(z) = 0$ for $m \geq 3$.
These equations determine the coefficients $\{ T_m \}_{m \ge -1}$
of $T(z)$ uniquely.

Our task for the proof of Theorem \ref{thm:WKB-Wg,n} is to show that
\begin{equation} \label{eq:main-equation-quantization}
T_m(z) = \hat{T}_m(z) \quad (m \ge -1).
\end{equation}
Equivalently, we must prove that $\{\hat{T}_m\}_{m \geq 0}$ satisfies
the recurrence relations \eqref{eq:Riccati-gen-z--1} and
\eqref{eq:Riccati-gen-z-m+1}.
The rest of this subsection is devoted to a proof 
of \eqref{eq:main-equation-quantization}.

\medskip

{\bf\boldmath{Proof of $T_{-1}(z) = \hat{T}_{-1}(z)$.}}\quad
Since
\begin{equation}
U_0(z) = {x'(z)} q_0(x(z)), \quad V_0(z) = {x'(z)}^2 r_0(x(z)),
\end{equation}
and $(x(z), y(z))$ satisfies $y(z)^2 + q_0(x(z)) y(z) + r_0(x(z)) = 0$,
we find
\begin{equation}
\hat{T}_{-1}(z) = \frac{W_{0, 1}(z)}{dz} = y(z) x'(z)
\end{equation}
satisfies \eqref{eq:Riccati-gen-z--1}.
Thus we have verified \eqref{eq:main-equation-quantization} for $m = -1$.

\medskip

{\bf\boldmath{Proof of $T_0(z) = \hat{T}_{0}(z)$.}}\quad
Since $y(z)$ and $y(\overline{z})$ are two solutions of $P(x(z), y) = 0$,
\begin{equation}
y(z) + y(\overline{z}) = - q_0(x(z)).
\end{equation}
Therefore, 
\begin{equation}
2 T_{-1}(z) + U_0 (z)
= x'(z) \big\{2 y(z) + q_0(x(z))\big\}
= x'(z) \big\{y(z) - y(\overline{z})\big\}
= x'(z) \Delta (z)
\end{equation}
holds. 
Then, \eqref{eq:Riccati-gen-z-m+1} for $m=-1$ implies that 
the desired relation $T_0(z) = \hat{T}_{0}(z)$ is equivalent to 
\begin{equation}
\label{eq:goal1}
x'(z) \Delta(z)\hat{T}_0
+ \left\{U_1(z) - \frac{x''(z)}{x'(z)}\right\} \hat{T}_{-1}
+ \frac{d}{dz}\hat{T}_{-1} + V_1(z) = 0.
\end{equation}
In order to verify \eqref{eq:goal1}, we first prove the following.

\begin{lem}\label{lem:U1-expression}
The function $U_1(z)$ is expressed as  
\begin{equation}
\label{eq:gual2:equiv}
U_1(z)
=
-\frac{\Delta'(z)}{\Delta(z)} + 
\frac{G_{0, 2}(\overline{z}, z) + G_{0, 2}(z, \overline{z})}{dz}.
\end{equation}
\end{lem}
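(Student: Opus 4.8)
The plan is to start from the definition $U_1(z) = x'(z)\,q_1(x(z))$, which follows from $U(z,\hbar) = x'(z)\,q(x(z),\hbar)$, and then to rewrite the defining formula for $q_1$ in terms of the second-order differentials $G_{0,2}$. Since $q_1$ is specified by \eqref{eq:WKB-Wg,n:q1}, we have
\[
U_1(z) = -\frac{\Delta'(z)}{\Delta(z)} + \frac{2}{z-\overline{z}} - \sum_{\beta \in B}\frac{\nu_\beta + \nu_{\overline{\beta}}}{z - \beta}.
\]
Because the term $-\Delta'/\Delta$ is already common to both \eqref{eq:WKB-Wg,n:q1} and the target identity \eqref{eq:gual2:equiv}, the whole lemma reduces to verifying that the remaining terms equal $\big(G_{0,2}(\overline{z},z)+G_{0,2}(z,\overline{z})\big)/dz$.

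Next I would substitute the explicit expressions already established. Specializing Proposition \ref{prop:G-properties-01} (ii) to $(z_0,z_1)=(z,\overline{z})$ gives
\[
\frac{G_{0,2}(z,\overline{z})}{dz} = \frac{1}{z-\overline{z}} - \sum_{\beta\in B}\frac{\nu_\beta}{z-\beta},
\]
and specializing Proposition \ref{prop:G-properties-02} (i) to $(z_0,z_1)=(z,z)$ gives
\[
\frac{G_{0,2}(\overline{z},z)}{dz} = \frac{1}{z-\overline{z}} - \sum_{\beta\in B}\frac{\nu_\beta}{z-\overline{\beta}}.
\]
Adding these produces $2/(z-\overline{z})$ minus two sums, one over $1/(z-\beta)$ and one over $1/(z-\overline{\beta})$, so all that remains is to reconcile the two sums with the single sum $\sum_{\beta}(\nu_\beta+\nu_{\overline{\beta}})/(z-\beta)$ appearing in $U_1$.

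The one step requiring genuine care is this reindexing. Here I would invoke the fact that $B = x^{-1}(\mathrm{Sing}(P))$ is closed under the conjugate map (Remark \ref{rem:MainThm} (iii)) and that $\overline{\,\cdot\,}$ is an involution, so that $\beta\mapsto\overline{\beta}$ is a bijection of $B$ onto itself. Relabeling the second sum by this bijection turns $\sum_{\beta\in B}\nu_\beta/(z-\overline{\beta})$ into $\sum_{\beta\in B}\nu_{\overline{\beta}}/(z-\beta)$, and the two sums then combine into $\sum_{\beta\in B}(\nu_\beta+\nu_{\overline{\beta}})/(z-\beta)$, matching $U_1$ exactly. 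I expect the computation to be otherwise routine; the only place a slip could occur is precisely this relabeling, where one must confirm both that $B$ is stable under conjugation (so the new index set is again $B$) and that the involution property makes the substitution $\overline{\beta}\mapsto\beta$ consistent.
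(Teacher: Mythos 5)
Your proof is correct and is essentially identical to the paper's: the paper likewise evaluates $\bigl(G_{0,2}(\overline{z},z)+G_{0,2}(z,\overline{z})\bigr)/dz$ via Proposition \ref{prop:G-properties-01} (ii) and Proposition \ref{prop:G-properties-02} (i), obtaining $2/(z-\overline{z})-\sum_{\beta\in B}\nu_\beta\bigl(1/(z-\beta)+1/(z-\overline{\beta})\bigr)$, and then reindexes the second sum using the closure of $B$ under the conjugate map (Remark \ref{rem:MainThm} (iii)) to match \eqref{eq:WKB-Wg,n:q1}. The reindexing step you flag as the delicate point is exactly the step the paper singles out as well.
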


\begin{proof}[Proof of Lemma \ref{lem:U1-expression}]
It follows from Proposition \ref{prop:G-properties-01} (ii) 
and Proposition \ref{prop:G-properties-02} (i) that 
\begin{align}
 &
\frac{G_{0, 2}(\overline{z}, z) + G_{0, 2}(z, \overline{z})}{dz}
=
\frac{2}{z - \overline{z}}
- \sum_{\beta\in B} \nu_{\beta}\left(\frac{1}{z - \beta} + 
\frac{1}{z - \overline{\beta}}\right)
=
\frac{2}{z - \overline{z}}
- \sum_{\beta\in B} \frac{\nu_{\beta} + \nu_{\overline{\beta}}}{z -\beta}.
\end{align}
Here we have used the fact that $B$ is 
closed under the conjugate map (cf. Remark \ref{rem:MainThm} (iii))
in the last equality. This proves the relation \eqref{eq:gual2:equiv}. 
\end{proof}

Let us prove
\begin{equation}
\label{eq:goal1:equiv}
V_1(z)
= -
x'(z) \Delta(z) \hat{T}_0
- \left\{U_1(z) - \frac{x''(z)}{x'(z)}\right\} \hat{T}_{-1}
- \frac{d}{dz}\hat{T}_{-1},
\end{equation}
which is equivalent to \eqref{eq:goal1}.
By using Proposition \ref{prop:That} and Lemma \ref{lem:U1-expression},
the right-hand side becomes
\begin{align}
 &
-x'(z) \Delta(z) \hat{T}_0
- \left\{U_1(z) - \frac{x''}{x'}\right\} \hat{T}_{-1}
- \frac{d}{dz} \hat{T}_{-1}\\
&\qquad
=
x'(z) \Delta(z) \frac{G_{0, 2}(\overline{z}, z)}{dz}
- \left\{
-\frac{\Delta'(z)}{\Delta(z)} 
+ \frac{G_{0, 2}(\overline{z}, z) + G_{0, 2}(z, \overline{z})}{dz}
- \frac{x''(z)}{x'(z)} \right\} x'(z)y(z)
\notag\\
&\qquad\quad
-\frac{d}{dz} \big(x'(z) y(z)\big)
\notag
\\
&\qquad
=
x'(z) \Delta(z) \frac{G_{0, 2}(\overline{z}, z)}{dz}
+ \frac{\Delta'(z)}{\Delta(z)} x'(z) y(z)
\notag\\
&\qquad\qquad
- \frac{G_{0, 2}(\overline{z}, z) + G_{0, 2}(z, \overline{z})}{dz}
x'(z) y(z)
- x'(z) y'(z).
\notag
\end{align}
Substituting
\[
y(z) = \frac{y(z) - y(\overline{z})}{2} + \frac{y(z) +
 y(\overline{z})}{2}
= \frac{1}{2} \Delta(z) - \frac{1}{2} q_0(x(z))
\]
into the last expression, we find
\begin{align}
&(\text{RHS of \eqref{eq:goal1:equiv}})
\\
&\qquad
=
\frac{1}{2}
x'(z) \Delta(z) \frac{G_{0, 2}(\overline{z}, z) - G_{0, 2}(z, \overline{z})}{dz}
- \frac{1}{2} x'(z) \frac{\Delta'(z)}{\Delta(z)} q_0(x(z))
\notag\\
&\qquad\qquad
+ \frac{1}{2} x'(z)^2 \left. \frac{d q_0}{d x} \right|_{x = x(z)}
+ \frac{1}{2} x'(z) q_0(x(z)) \,
\frac{G_{0, 2}(\overline{z}, z) + G_{0, 2}(z, \overline{z})}{dz}.
\notag
\end{align}
Then, thanks to the equalities \eqref{eq:gual2:equiv} and
\begin{align}
\frac{G_{0, 2}(\overline{z}, z) - G_{0,2}(z, \overline{z})}{dz}
&=
- \sum_{\beta \in B} \frac{\nu_{\beta}}{z - \overline{\beta}}
+ \sum_{\beta \in B} \frac{\nu_{\beta}}{z - \beta}
=
\sum_{\beta \in B} \frac{\nu_{\beta} - \nu_{\overline{\beta}}}{z - \beta},
\end{align}
we conclude that the right-hand side of \eqref{eq:goal1:equiv}
is equal to $V_1(z)$. Thus we have verified \eqref{eq:goal1}, 
and hence, \eqref{eq:main-equation-quantization} holds for $m = 0$.

\medskip
{\bf\boldmath{Proof of $T_1(z) = \hat{T}_{1}(z)$.}}\quad
It follows from Proposition \ref{prop:That} and 
Lemma \ref{lem:U1-expression} that 
$\hat{T}_1(z)$ satisfies 
\begin{align}
 x'(z) \Delta(z) \hat{T}_1(z) 
+ \left\{U_1(z) - \frac{x''(z)}{x'(z)}\right\} \hat{T}_0(z)
+ {\hat{T}_0(z)}^2
+ \frac{d \hat{T}_0}{dz}(z) 
\\ 
\qquad
+ x'(z) \Delta(z) \sum_{\beta  \in B_1}
\frac{\nu_{\beta} \nu_{\overline{\beta}}}{2 C_{\beta}}
\frac{G_{0, 2}(z, \beta) - G_{0, 2}(z, \overline{\beta})}{dz}
= 0. 
\notag
\end{align}
In view of \eqref{eq:Riccati-gen-z-m+1} for $m=0$, the desired relation 
$T_1(z) = \hat{T}_{1}(z)$ is equivalent to 
the following claim:
\begin{lem}\label{lem:V2-expression}
The function $V_2(z)$ is expressed as
\begin{equation}
\label{eq:goal3}
V_2(z) = x'(z) \Delta(z)
\sum_{\beta \in B_1}
\frac{\nu_{\beta} \nu_{\overline{\beta}}}{2 C_{\beta}}
\frac{G_{0,2}(z, \beta) - G_{0,2}(z, \overline{\beta})}{dz}.
\end{equation}
\end{lem}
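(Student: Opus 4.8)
The plan is to unwind the definition of $V_2(z)$ and reduce \eqref{eq:goal3} to an elementary identity among rational functions. By construction $V_2(z) = (x'(z))^2 r_2(x(z)) = x'(z)\cdot\bigl(x'(z) r_2(x(z))\bigr)$, so substituting the defining formula \eqref{eq:WKB-Wg,n:r2} for $x'(z) r_2(x(z))$ gives
\[
V_2(z) = x'(z)\,\Delta(z)\sum_{\beta \in B_1}\frac{\nu_\beta\nu_{\overline{\beta}}}{C_\beta}\,\frac{1}{z-\beta}.
\]
Hence, after cancelling the common factor $x'(z)\Delta(z)$, proving the lemma amounts to verifying
\[
\sum_{\beta \in B_1}\frac{\nu_\beta\nu_{\overline{\beta}}}{C_\beta}\,\frac{1}{z-\beta}
= \sum_{\beta \in B_1}\frac{\nu_\beta\nu_{\overline{\beta}}}{2C_\beta}\,\frac{G_{0,2}(z,\beta)-G_{0,2}(z,\overline{\beta})}{dz}.
\]

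First I would evaluate the difference quotient on the right using the explicit expression \eqref{eq:G_02} from Proposition \ref{prop:G-properties-01} (ii). Writing $G_{0,2}(z,\beta)/dz = 1/(z-\beta) - \sum_{\gamma\in B}\nu_\gamma/(z-\gamma)$ and the analogous formula with $\overline{\beta}$ in place of $\beta$, the entire $\sum_{\gamma\in B}$ tail is independent of the second argument and therefore cancels in the difference, leaving
\[
\frac{G_{0,2}(z,\beta)-G_{0,2}(z,\overline{\beta})}{dz} = \frac{1}{z-\beta}-\frac{1}{z-\overline{\beta}}.
\]
Substituting this back, the right-hand side splits into a sum carrying $1/(z-\beta)$ and a sum carrying $1/(z-\overline{\beta})$.

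The key step is then to symmetrize the $1/(z-\overline{\beta})$ sum. Since $B_1$ is closed under the conjugate map (Remark \ref{rem:MainThm} (iii)) and conjugation is an involution, replacing $\beta$ by $\overline{\beta}$ permutes $B_1$ bijectively; applying this reindexing turns $1/(z-\overline{\beta})$ into $1/(z-\beta)$ while sending the weight $\nu_\beta\nu_{\overline{\beta}}/C_\beta$ to $\nu_{\overline{\beta}}\nu_\beta/C_{\overline{\beta}}$. Using the antisymmetry $C_{\overline{\beta}} = -C_\beta$ from \eqref{eq:C-beta-bar}, the two minus signs --- one from the difference $1/(z-\beta)-1/(z-\overline{\beta})$ and one from $C_{\overline{\beta}}=-C_\beta$ --- combine to a net $+1$, so the reindexed sum coincides termwise with the $1/(z-\beta)$ sum. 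Adding the two halves produces exactly $\sum_{\beta\in B_1}(\nu_\beta\nu_{\overline{\beta}}/C_\beta)/(z-\beta)$, which matches the left-hand side and proves the identity.

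The step demanding the most care --- the only place where a sign error would be fatal --- is this conjugation-reindexing: one must confirm that the numerator $\nu_\beta\nu_{\overline{\beta}}$ is genuinely symmetric under $\beta\leftrightarrow\overline{\beta}$ (which it is, being a product) and that the orientation reversal $C_{\overline{\beta}}=-C_\beta$ is invoked so that the two sign flips reinforce rather than cancel. Apart from this bookkeeping the argument is purely formal, involving no analytic estimate or limiting process.
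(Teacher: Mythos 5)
Your proof is correct and follows essentially the same route as the paper's: expand the difference $G_{0,2}(z,\beta)-G_{0,2}(z,\overline{\beta})$ via Proposition \ref{prop:G-properties-01} (ii), then reindex the $\overline{\beta}$-sum using the closure of $B_1$ under the conjugate map together with $C_{\overline{\beta}}=-C_{\beta}$, so that the two halves add up to $\sum_{\beta\in B_1}(\nu_{\beta}\nu_{\overline{\beta}}/C_{\beta})(z-\beta)^{-1}$. The only difference is that you make explicit the step $V_2(z)=(x'(z))^2 r_2(x(z))$ combined with \eqref{eq:WKB-Wg,n:r2}, which the paper leaves implicit; your sign bookkeeping is accurate.
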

\begin{proof}[Proof of Lemma \ref{lem:V2-expression}]
It follows from Proposition \ref{prop:G-properties-01} (ii) 
and the relation \eqref{eq:C-beta-bar} that
\begin{align}
\sum_{\beta \in B_1}
\frac{\nu_{\beta} \nu_{\overline{\beta}}}{2 C_{\beta}}
\frac{G_{0,2}(z, \beta) - G_{0,2}(z, \overline{\beta})}{dz}
&=
\sum_{\beta \in B_1}
\frac{\nu_{\beta} \nu_{\overline{\beta}}}{2 C_{\beta}}
\frac{1}{z-\beta}
-
\sum_{\beta \in B_1}
\frac{\nu_{\beta} \nu_{\overline{\beta}}}{2 C_{\beta}}
\frac{1}{z-\overline{\beta}}
\\
&=
\sum_{\beta \in B_1}
\frac{\nu_{\beta} \nu_{\overline{\beta}}}{2 C_{\beta}}
\frac{1}{z-\beta}
+
\sum_{\beta \in B_1}
\frac{\nu_{\beta} \nu_{\overline{\beta}}}{2 C_{\overline{\beta}}}
\frac{1}{z-\overline{\beta}}
\notag\\
&=
\sum_{\beta \in B_1}
\frac{\nu_{\beta} \nu_{\overline{\beta}}}{C_{\beta}}
\frac{1}{z-\beta}.
\notag
\end{align}
In the last equality, we have also used the fact that the set $B_1$ is closed under
the conjugate map (cf. Remark \ref{rem:MainThm} (iii)).
This proves \eqref{eq:goal3}.
\end{proof}
Thus we have verified that 
\eqref{eq:main-equation-quantization} holds for for $m = 1$.

\medskip
{\bf\boldmath{Proof of $T_m(z) = \hat{T}_{m}(z)$ for $m \geq 2$.}}\quad
Proposition \ref{prop:That} and 
Lemma \ref{lem:U1-expression} show that 
\begin{align}
 x'(z) \Delta(z) \hat{T}_{m+1}(z) 
+ \left\{U_1(z) - \frac{x''(z)}{x'(z)}\right\} \hat{T}_0(z)
+ \sum_{j=0}^{m} \hat{T}_{m-j}(z) \hat{T}_{j}(z)
+ \frac{d\hat{T}_m}{dz}
= 0 \quad (m \ge 1)
\notag
\end{align}
holds. This is the same as the equation \eqref{eq:Riccati-gen-z-m+1} 
for $m \ge 1$ satisfied by $T_{m+1}(z)$. 
Thus we have proved the desired relation
\eqref{eq:main-equation-quantization} for all $m \ge 2$. 

\medskip
This completes the proof of Theorem \ref{thm:WKB-Wg,n}.


\section{The Voros coefficient of the Weber equation and the free energy of the Weber curve} 
\label{sec:weber}


In this section, we discuss the Weber curve defiened by
\begin{equation}
\label{Weber_P(x,y)}
P(x, y) := y^2 - \frac{x^2}{4} + \lambda = 0
\quad (\lambda \neq 0),
\end{equation}
and its quantization constructed in the previous section.
This is a model case of our study of quatum curves.
We will establish a relationship between the 
Voros coefficient of quantum curves and the 
free energy of the Weber curve. 
As a corollary, we can obtain an explicit expression of both of 
the Voros coefficients and the free energy, and recover the 
expression of $F_g$ computed in \cite{HZ}.
Some of the results in this subsection are studied in \cite{Takei17}.

\subsection{Weber equation as a quantum curve}
To apply the topological recursion,
we parametrize the Weber curve as
\begin{equation}
\label{Weber_parameterization}
\begin{cases}
\displaystyle
x = x(z) = \sqrt{\lambda} \left(z + \frac{1}{z}\right),\\
\displaystyle
y = y(z) = \frac{\sqrt{\lambda}}{2} \left(z - \frac{1}{z}\right)
\end{cases}
\end{equation}
and regard it as a spectral curve in the sense of Definition \ref{def:spectral-curve}.
Since
\begin{equation}
dx = \sqrt{\lambda}\,\, \frac{z^2 - 1}{z^2} dz,
\end{equation}
ramification points are given by $R = \{+1, -1 \} ~(=R^{\ast})$,
and the conjugate map becomes $\overline{z} = 1/z$.
A straightforward computation gives
\begin{equation} \label{eq:Weber_ydx}
y(\overline{z}) = - y(z),
\quad
\Delta(z) = 2 y(z),
\quad
y(z) dx(z) = \frac{\lambda (z^2-1)^2}{2 z^3} dz.
\end{equation}
The correlation functions and free energies for lower $g$ and $n$ are given be
\begin{align} \label{eq:W03-Weber}
W_{0, 3}(z_1, z_2, z_3)
&= \frac{1}{2 \lambda} \left\{ \frac{1}{(z_1 + 1)^2 (z_2 + 1)^2 (z_3 + 1)^2} \right.\\
&\qquad\qquad
\left. - \frac{1}{(z_1 - 1)^2 (z_2 - 1)^2 (z_3 - 1)^2} \right\} dz_1 \, dz_2 \,dz_3, \notag\\
W_{1, 1}(z) &= - \frac{z^3}{\lambda (z^2 - 1)^4} \, dz ,
\qquad
W_{2, 1}(z) = - \frac{21(z^{11}+ 3 z^9 + z^7)}{(z^2 - 1)^{10} \lambda^3} dz
\end{align}
and
\begin{align} \label{eq:Weber-lower-genus-free-energy}
F_0(\lambda) &= - \frac{3}{4} \lambda^2 + \frac{1}{2} \lambda^2 \log{\lambda},
\quad F_1(\lambda) = - \frac{1}{12} \log{\lambda},
\quad F_2(\lambda) = - \frac{1}{240 \lambda^2}.
\end{align}

Correlation functions and free energies 
have the following homogeneity properties with respect to $\lambda$:
\begin{prop}[{\cite[Theorem 5.3]{EO}}]
\label{prop:weber:homogeneous}
\begin{align}
W_{g, n}(z_1, \cdots, z_n)
&= \frac{1}{\lambda^{2g -2 + n}} \times
 \left(W_{g, n}(z_1, \cdots, z_n) \Big|_{\lambda = 1}\right)
\quad (2g + n \geq 3),\\
F_g (\lambda)
&= \frac{1}{\lambda^{2g -2}} \times F_{g} (1)
\quad (g \geq 2).
\end{align}
\end{prop}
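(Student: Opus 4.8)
The plan is to prove both homogeneity identities by induction on $2g+n$ via the topological recursion \eqref{eq:TR}, exploiting the fact that the parametrization \eqref{Weber_parameterization} scales uniformly: both $x(z)$ and $y(z)$ acquire a factor $\sqrt{\lambda}$, while the $z$-coordinate, the ramification points $R=\{\pm 1\}$, and the conjugate map $\overline{z}=1/z$ are all independent of $\lambda$. First I would record the scaling weights of the basic ingredients. From \eqref{eq:Weber_ydx}, $W_{0,1}(z)=y(z)\,dx(z)$ carries weight $\lambda^{1}$, whereas $W_{0,2}(z_0,z_1)=B(z_0,z_1)=dz_0\,dz_1/(z_0-z_1)^2$ is independent of $\lambda$, i.e.\ carries weight $\lambda^{0}$. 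Both are consistent with the claimed exponent $-(2g-2+n)$ (giving $\lambda^{+1}$ for $(g,n)=(0,1)$ and $\lambda^{0}$ for $(g,n)=(0,2)$), so the homogeneity formula in fact holds for the unstable cases too, and I would take these as the base of the induction.

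Next I would determine the weight of the recursion kernel \eqref{eq:RecursionKernel}. Since $\Delta(z)=y(z)-y(\overline{z})=2y(z)$ scales as $\sqrt{\lambda}$ and $dx(z)$ scales as $\sqrt{\lambda}$, while $\int_{\overline{z}}^{z}B(z_0,\zeta)$ is $\lambda$-independent, the kernel $K_r(z_0,z)$ carries weight $\lambda^{-1}$. For the inductive step, assume the formula holds for all $W_{g',n'}$ with $2g'+n'<2g+(n+1)$. In each term on the right-hand side of \eqref{eq:TR} for $W_{g,n+1}$, the weights combine as follows: the term $K_r\cdot W_{g-1,n+2}$ carries weight $\lambda^{-1}\cdot\lambda^{-(2g-2+n)}=\lambda^{-(2g-2+(n+1))}$, and each product $K_r\cdot W_{g_1,|I_1|+1}\cdot W_{g_2,|I_2|+1}$ with $g_1+g_2=g$ and $|I_1|+|I_2|=n$ carries weight $\lambda^{-1}\cdot\lambda^{-(2g_1-1+|I_1|)-(2g_2-1+|I_2|)}=\lambda^{-(2g-2+(n+1))}$. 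Because the ramification points are $\lambda$-independent, the common $\lambda$-power factors out of each residue, giving $W_{g,n+1}=\lambda^{-(2g-2+(n+1))}\bigl(W_{g,n+1}|_{\lambda=1}\bigr)$ and closing the induction.

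Finally, the free-energy identity follows from the definition \eqref{def:Fg2}. The primitive $\Phi(z)$ of $W_{0,1}(z)=y(z)\,dx(z)$ satisfies $d\Phi|_{\lambda}=\lambda\,d\Phi|_{\lambda=1}$, hence $\Phi|_{\lambda}=\lambda\,\Phi|_{\lambda=1}+c(\lambda)$ for some additive constant $c(\lambda)$; combined with the weight $\lambda^{-(2g-1)}$ of $W_{g,1}$ established above, the integrand $\Phi(z)W_{g,1}(z)$ carries weight $\lambda^{1-(2g-1)}=\lambda^{-(2g-2)}$. The additive constant $c(\lambda)$ does not contribute, since $W_{g,1}$ has vanishing residue at every ramification point by \thmref{TRprop} (ii); as the prefactor $1/(2-2g)$ and the residue locations are $\lambda$-independent, one concludes $F_g(\lambda)=\lambda^{-(2g-2)}F_g(1)$.

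The argument is essentially bookkeeping of scaling weights, so there is no serious obstacle; one could alternatively invoke the general scaling result \cite[Theorem 5.3]{EO}. The only points demanding care are the treatment of the additive constant in the primitive $\Phi(z)$, resolved by the residue-freeness of $W_{g,1}$, and the correct inclusion of $W_{0,2}$ (weight $\lambda^{0}$) among the base cases, since products involving $W_{0,2}$ do appear on the right-hand side of \eqref{eq:TR}.
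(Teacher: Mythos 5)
Your proof is correct, but it takes a different route from the paper: the paper offers no argument at all for this proposition, simply attributing it to the general homogeneity theorem of Eynard--Orantin (\cite[Theorem 5.3]{EO}), which states that rescaling $y\,dx \mapsto \mu\, y\,dx$ rescales $W_{g,n} \mapsto \mu^{2-2g-n}W_{g,n}$ and $F_g \mapsto \mu^{2-2g}F_g$; here $\mu = \lambda$ because both $x(z)$ and $y(z)$ in \eqref{Weber_parameterization} carry a factor $\sqrt{\lambda}$ while $z$, $R=\{\pm 1\}$ and $\overline{z}=1/z$ are $\lambda$-independent. What you do instead is reprove this specialization from scratch: the weight bookkeeping in the induction on $2g+n$ is accurate (kernel of weight $\lambda^{-1}$, base cases $W_{0,1}$ of weight $\lambda^{1}$ and $W_{0,2}$ of weight $\lambda^{0}$, and both types of terms in \eqref{eq:TR} combining to weight $\lambda^{-(2g-2+(n+1))}$, with the residues taken at $\lambda$-independent points so that constant powers of $\lambda$ factor out), and your treatment of the free energy via \eqref{def:Fg2} correctly disposes of the additive ambiguity in the primitive $\Phi$ using the residue-freeness of $W_{g,1}$ from Theorem \ref{thm:TRprop} (ii). The citation buys brevity and generality (it covers any curve with this scaling behavior); your argument buys self-containedness and makes visible exactly which structural facts of the Weber curve are used --- in particular that the ramification points and the conjugate map do not move with $\lambda$, and why the claim is restricted to $g\ge 2$ (the residue formula \eqref{def:Fg2} only applies there; indeed $F_0$ and $F_1$ in \eqref{eq:Weber-lower-genus-free-energy} contain $\log\lambda$ terms and are not homogeneous). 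Your closing observation that $W_{0,2}$ must be admitted among the base cases, since it reappears in the product terms of \eqref{eq:TR}, is exactly the point a careless induction would miss.
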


Since
\begin{equation}
Q_0(x) = \frac{x^2}{4} - \lambda, \quad
Q_{\infty}(x) = \frac{1}{x^6} \left( \frac{1}{4} - \lambda x^2 \right),
\end{equation}
we have
\begin{equation}
\Sing (P) = \{\infty\},
\quad
B = \{0, \infty\},
\quad
B_1 = \emptyset.
\end{equation}
(cf.,\,\eqref{eq:Weber_ydx}.)
Therefore we choose
\begin{equation}
\label{Weber_D}
D(z; \underline{\nu}) := [z] - \nu_0 [0] - \nu_{\infty} [\infty]
\qquad
(\nu_0 + \nu_{\infty} = 1, \, \underline{\nu} = (\nu_0, \nu_{\infty}))
\end{equation}
as a divisor for the quantization. 
The quantum curve for \eqref{Weber_P(x,y)} 
constructed by Theorem \ref{thm:WKB-Wg,n} becomes
\begin{equation}
\label{Weber_eq}
\left\{\hbar^2 \frac{d^2}{dx^2} -
\left( \frac{x^2}{4} - \hat{\lambda} \right)
\right\}  \psi = 0,
\end{equation}
where 
\begin{equation} \label{eq:lambda-hat-and-nu}
\hat{\lambda} := \lambda - \frac{\hbar \nu}{2} 
\quad \text{and} \quad 
\nu := \nu_{\infty} - \nu_{0}.
\end{equation}
We call the Schr{\"o}dinger-type equation \eqref{Weber_eq}
quantum Weber curve.

\subsection{The Voros coefficient for the quantum Weber curve}
\label{subsec:Weber-Voros}

A WKB solution of the quantum Weber curve \eqref{Weber_eq} is given by
\begin{equation}
\label{eq:WKB:Weber}
\psi (x, \lambda, \nu;\hbar) := \exp
\left(\int^x S(x, \lambda, \nu; \hbar) dx\right), 
\end{equation}
where 
$S(x, \lambda, \nu; \hbar) = \sum_{m = -1}^{\infty} \hbar^m S_m(x, \lambda, \nu)$ 
is a solution of the Riccati equation \eqref{eq:Riccati-gen}
(or the recursion relations \eqref{eq:Riccati-gen-1}--\eqref{eq:Riccati-gen-3})
associated with \eqref{Weber_eq}. 
First few coefficients are given by 
\begin{equation}
S_{-1}(x, \lambda) = \sqrt{\frac{x^2}{4} - \lambda},
\quad
S_0 (x, \lambda, \nu) = - \frac{x}{2(x^2 - 4\lambda)} +   \frac{\nu}{2\sqrt{x^2-4\lambda}},
\quad
\cdots.
\end{equation}

\noindent
Here a branch of $S_{-1}(x, \lambda)$ remains undetermined.
As is explained in \S\ref{sec:review},
once we fix a branch of $S_{-1}(x, \lambda)$,
all of $S_m(x, \lambda, \nu)$ for $m \geq 0$ are
determined uniquely.

By induction, we can show
\begin{prop}
\label{prop:weber:Sn}
For $m = -1, 0, 1, 2, \cdots$, we have
\begin{align*}
\text{{\rm{(i)}}} &\quad
S_m(x, \lambda, \nu) = O(x^{-2m-1})
\quad
(x \rightarrow \infty).\\
\text{{\rm{(ii)}}} &\quad
S_m(\sqrt{\lambda}x, \lambda, \nu)
= \lambda^{-m-1/2} S_m(x, 1, \nu).
\end{align*}
\end{prop}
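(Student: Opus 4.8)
The plan is to establish both (i) and (ii) by a single induction on $m$, feeding off the recursion relations \eqref{eq:Riccati-gen-1}--\eqref{eq:Riccati-gen-3} specialized to the quantum Weber curve \eqref{Weber_eq}. Since \eqref{Weber_eq} is already in SL-form we have $q(x,\hbar)\equiv 0$, hence $q_0=q_1=0$, together with $r_0(x)=-\bigl(\tfrac{x^2}{4}-\lambda\bigr)$, $r_1=-\tfrac{\nu}{2}$ and $r_2=0$. With these substitutions the recursion takes the uniform shape
\[
S_{-1}^2=\frac{x^2}{4}-\lambda,
\qquad
2S_{-1}\,S_{m+1}+\sum_{j=0}^{m}S_{m-j}S_{j}+\frac{dS_m}{dx}=0
\quad(m\ge 0),
\]
so that $S_{m+1}=-\tfrac{1}{2S_{-1}}\bigl(\sum_{j=0}^{m}S_{m-j}S_j+\tfrac{dS_m}{dx}\bigr)$ for all $m\ge 0$. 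The base cases $m=-1,0$ of both (i) and (ii) follow by inspection from the explicit formulas $S_{-1}=\tfrac12\sqrt{x^2-4\lambda}$ and the displayed expression for $S_0$.

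For the inductive step of (i) I would work in the field of Laurent series in $x^{-1}$ at $x=\infty$, after fixing the branch $\sqrt{x^2-4\lambda}=x\bigl(1-2\lambda x^{-2}+\cdots\bigr)$. A parallel induction shows that every $S_m$ lies in this field, since the recursion uses only field operations, differentiation, and division by $2S_{-1}$; thus ``$S_m=O(x^{-2m-1})$'' means precisely that the expansion of $S_m$ starts at order at least $2m+1$, and differentiation acts termwise, raising this order by one. Assuming $S_j=O(x^{-2j-1})$ for $-1\le j\le m$, each product $S_{m-j}S_j$ is $O(x^{-2m-2})$ and $dS_m/dx=O(x^{-2m-2})$, so the bracket is $O(x^{-2m-2})$; multiplying by $1/(2S_{-1})=O(x^{-1})$ gives $S_{m+1}=O(x^{-2(m+1)-1})$, as wanted.

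For the inductive step of (ii) I would substitute $x\mapsto\sqrt{\lambda}\,x$ into the recursion and apply the inductive hypothesis $S_j(\sqrt{\lambda}\,x,\lambda,\nu)=\lambda^{-j-1/2}S_j(x,1,\nu)$ for $-1\le j\le m$ factor by factor. The one delicate point is the derivative term: differentiating this relation in $x$ produces, via the chain rule, an extra factor $\lambda^{-1/2}$, giving $(dS_m/dx)(\sqrt{\lambda}\,x,\lambda,\nu)=\lambda^{-m-1}(dS_m/dx)(x,1,\nu)$, which matches the weight $\lambda^{-m-1}$ of each product $S_{m-j}(\sqrt{\lambda}\,x)S_j(\sqrt{\lambda}\,x)$. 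Combining with $1/\bigl(2S_{-1}(\sqrt{\lambda}\,x,\lambda)\bigr)=\lambda^{-1/2}/\bigl(2S_{-1}(x,1)\bigr)$, the total power of $\lambda$ is $\lambda^{-1/2}\cdot\lambda^{-m-1}=\lambda^{-(m+1)-1/2}$, exactly the claimed scaling for $S_{m+1}$.

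The computations are routine, and the only places that genuinely require care are the two bookkeeping facts that make the estimates self-propagating: for (i), the observation that the $S_m$ belong to a Laurent-series field at infinity, which legitimizes the termwise differentiation used to pass from $O(x^{-k})$ to $O(x^{-k-1})$; and for (ii), the single extra $\lambda^{-1/2}$ produced by the chain rule on the derivative term, which is precisely what absorbs the half-integer shift in the exponent. I do not anticipate any obstacle beyond tracking these powers.
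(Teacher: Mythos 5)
Your proof is correct and follows exactly the route the paper intends: the paper disposes of this proposition with the single phrase ``By induction, we can show,'' meaning precisely the induction on the Riccati recursion \eqref{eq:Riccati-gen-1}--\eqref{eq:Riccati-gen-3} specialized to the quantum Weber curve that you carry out, with the order count at $x=\infty$ for (i) and the $\lambda$-weight bookkeeping (including the chain-rule factor $\lambda^{-1/2}$ on the derivative term) for (ii). No discrepancy with the paper's argument.
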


The property (i) in Proposition \ref{prop:weber:Sn} 
is nothing but a consequence of the holomorphicity 
of the correlation functions $W_{g,n}$ for $2g+n \ge 3$.

Before introducing the Voros coefficient of 
quantum Weber curve \eqref{Weber_eq}, let us 
make several remarks on properties of Weber curve
to specify a defining path of integration for the Voros coefficient. 

There exist two simple turning points of \eqref{Weber_eq}
at $a_1 = 2\sqrt{\lambda}$ and
$a_2 = -2\sqrt{\lambda}$.
See Fig.~\ref{fig:Weber:stokes} (a) 
for Stokes curves emanating from them. 
By the mapping $x=x(z)$, 
the turning point $a_1$ in the $x$-plane
corresponds to the ramification point $z = 1$ .
Because the map $z \mapsto x = x(z)$ is double covering, 
six Stokes curves emanate from the point $z = 1$ as shown in 
Fig.~\ref{fig:Weber:stokes} (b).
Same is true for $x = a_2$ and $z = -1$.

$x = \infty$ corresponds to 
two points $z=0$ and $\infty$ on $z$-plane. 
These two points also correspond to two different 
behaviors of $S_{-1}(x)$ as $x$ tends to infinity; that is, 
\begin{equation}
\label{eq:Weber_branch}
S_{-1}(x(z), \lambda) \sim 
\begin{cases} 
\dfrac{x(z)}{2} & \text{as $z \to \infty$,} \\[+.5em]
- \dfrac{x(z)}{2} & \text{as $z \to 0$}.
\end{cases}
\end{equation}


\begin{figure}[t]
\begin{center}
${\fbox{\includegraphics[width=.32\textwidth]{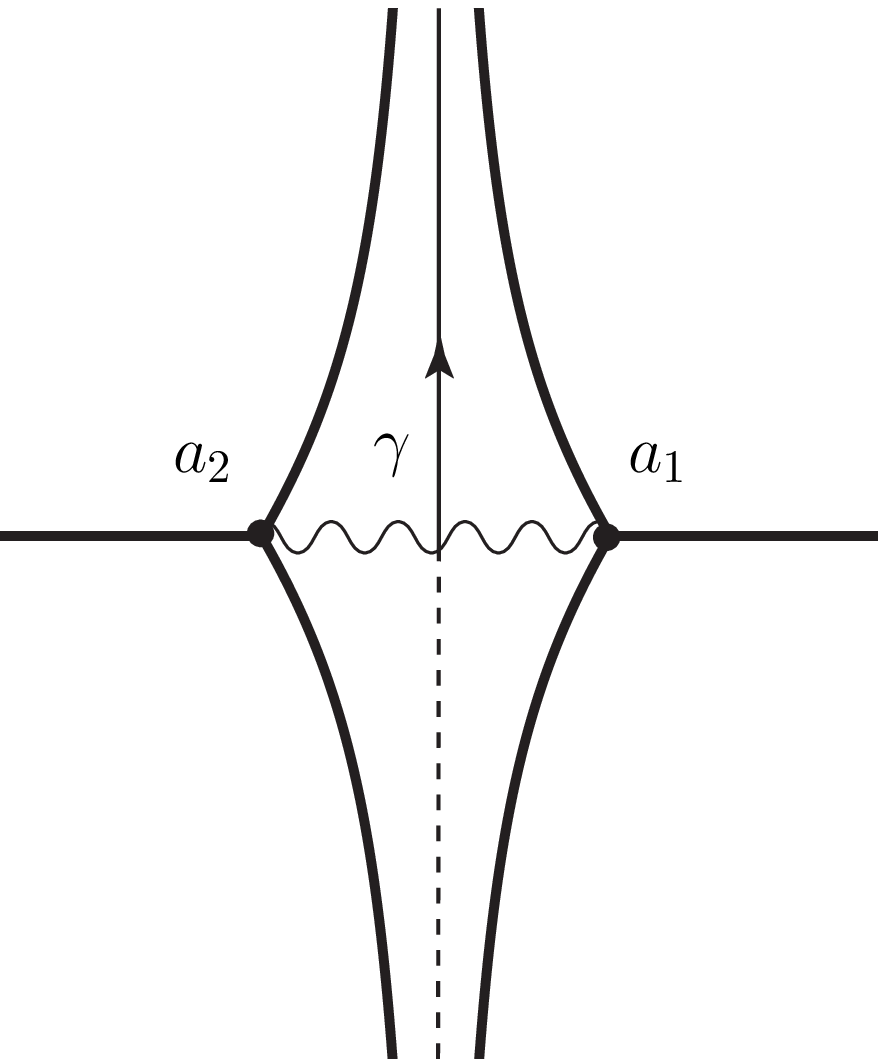}} 
\atop \text{{\normalsize{(a)}}}}$
\quad
${\fbox{\includegraphics[width=.32\textwidth]{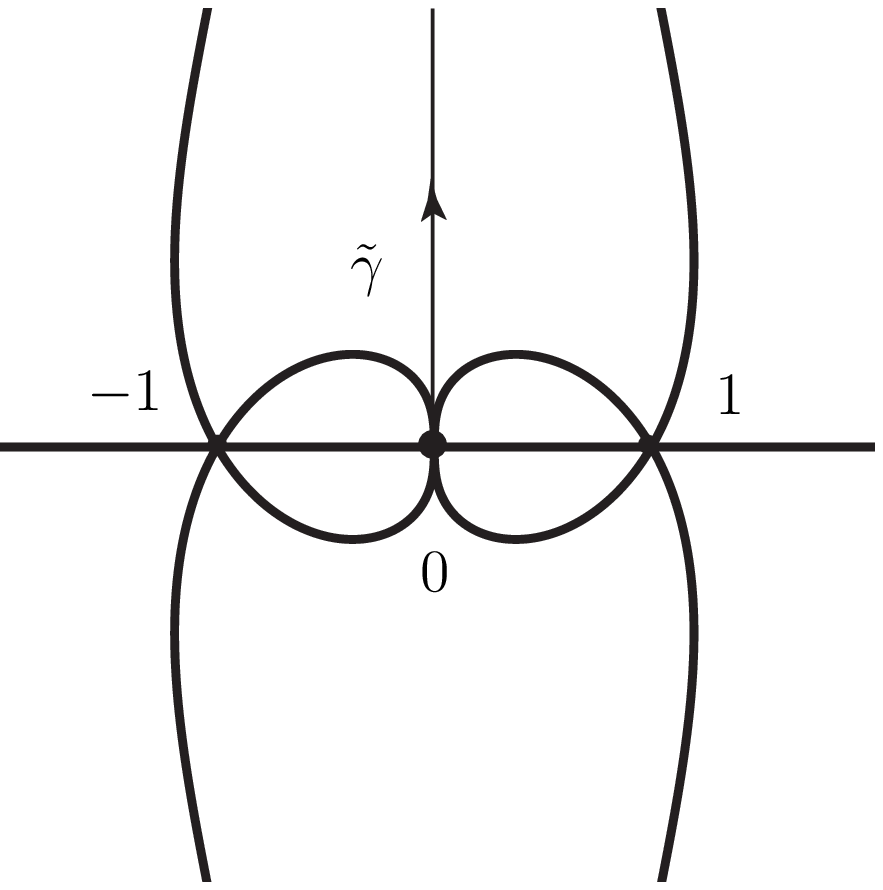}} 
\atop \text{{\normalsize{(b)}}}}$
\end{center}
\caption{(a) Stokes curves of \eqref{Weber_eq} with $\lambda > 0$ 
and the path $\gamma$ on the $x$-plane.
A wiggly line designates a branch cut to define $S_{-1}(x,\lambda)$.
(b) The inverse image of the Stokes curves and $\gamma$ by $x = x(z)$.
}
 \label{fig:Weber:stokes}
\end{figure}

Having the situation in our mind, we define the Voros coefficient
\begin{equation}
\label{Weber_Voros}
V(\lambda, \nu; \hbar)
:= \int_{\gamma} \Bigl( S(x, \lambda, \nu; \hbar) 
- \hbar^{-1} S_{-1}(x,\lambda, \nu) - S_0(x,\lambda, \nu) \Bigr) dx
= \sum_{m = 1}^{\infty} \hbar^m \int_{\gamma} S_m(x, \lambda,\nu) dx.
\end{equation}
Here $\gamma$ is a path from the infinity to itself (on $x$-plane) 
which crosses a branch cut once with $a_1$ on the right-hand side,
as is shown in Fig.~\ref{fig:Weber:stokes} (a).
In the figure, 
a solid (resp. dotted) portion of $\gamma$ means
that it is in a first (resp. second) sheet.
Note also that the path $\gamma$ is the image of 
the path $\tilde{\gamma}$ on $z$-plane from $0$ to $\infty$, 
which is indicated in Fig.~\ref{fig:Weber:stokes} (b).
Thanks to Proposition \ref{prop:weber:Sn} (i), 
this is a well-defined (formal) power series of $\hbar$.

\begin{rem}
Although the Stokes curves are irrelevant in the following discussion, 
those play an important role when we discuss the Borel summability. 
In particular, it is known that the Voros coefficient \eqref{Weber_Voros}
is Borel summable (as a formal series of $\hbar$) if its defining path 
$\gamma$ never intersects with a Stokes curve connecting turning points. 
In addition, the Stokes phenomenon (i.e., jump of Borel sum)
occurring to the (exponential of) Voros coefficient is closely related 
to the cluster transformation. See \cite{IN14} for details.
\end{rem}



\subsection{Relations between the Voros coefficient and the free energy}
\label{subsec:Voros-vs-TR}


In this subsection we formulate our main result which allows us 
to express the Voros coefficient of the quantum Weber curve 
by the free energy of the Weber curve with a parameter shift. 

Let
\begin{equation} \label{eq:total-free-energy-Weber}
F(\lambda; \hbar) := \sum_{g = 0}^{\infty} \hbar^{2g - 2} F_g(\lambda)
\end{equation}
be the free energy of the Weber curve \eqref{Weber_P(x,y)}.
We now claim

\begin{thm}
\label{thm:weber:main(i)}
The Voros coefficient \eqref{Weber_Voros} of the quantum Weber curve 
\eqref{Weber_eq} and the free energy \eqref{eq:total-free-energy-Weber} 
of the Weber curve \eqref{Weber_P(x,y)} are related as follows.
\begin{equation} \label{eq:V-and-F-general}
V(\lambda, \nu; \hbar)
= 
F \left(\hat{\lambda} +\frac{\hbar}{2}; \hbar \right)
- F \left(\hat{\lambda} - \frac{\hbar}{2} ; \hbar \right)
- \frac{1}{\hbar} \frac{\partial F_0}{\partial \lambda}(\lambda) 
+ \frac{ \nu }{2} \frac{\partial^2 F_0}{\partial \lambda^2}(\lambda). 
\end{equation}
(Recall that $\hat{\lambda} = \lambda - (\hbar \nu)/2$ 
and $\nu = \nu_{\infty} - \nu_0$ as we have set in \eqref{eq:lambda-hat-and-nu}.)


\end{thm}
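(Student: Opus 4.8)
The plan is to rewrite the Voros coefficient as an endpoint evaluation of the WKB exponent on the $z$-plane, and then to recognise the resulting boundary values as Taylor coefficients of the free energy via the variational formula \thmref{VariationFormula}.

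First I would transport the integral \eqref{Weber_Voros} to the $z$-plane. By \thmref{WKB-Wg,n} (concretely \eqref{eq:main-equation-quantization}) one has $x'(z)\,S_m(x(z))=\hat{T}_m(z)$, and the defining contour $\gamma$ is the image under $x=x(z)$ of the path $\tilde{\gamma}$ from $0$ to $\infty$, so that
\[
V(\lambda,\nu;\hbar)=\sum_{m\geq1}\hbar^m\int_{\tilde{\gamma}}\hat{T}_m(z)\,dz.
\]
Since $\hat{T}(z)\,dz=d\log\varphi$ by \eqref{eq:def:That}, the coefficient of $\hbar^m$ is the exact differential $d\Phi_m$, where $\Phi_m(z):=\sum_{2g+n-2=m}\frac{1}{n!}\int_{\zeta_1\in D(z)}\cdots\int_{\zeta_n\in D(z)}W_{g,n}(\zeta_1,\dots,\zeta_n)$ is the $m$-th coefficient of the exponent in \eqref{eq:wave-z}. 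Hence each $z$-integral telescopes to $\int_{\tilde{\gamma}}\hat{T}_m\,dz=\Phi_m(\infty)-\Phi_m(0)$. For $m\geq1$ only the stable correlation functions ($2g+n\geq3$) enter $\Phi_m$; these are holomorphic at $z=0,\infty$ (only the ramification points $\pm1$ are singular), so the boundary values are finite and the $W_{0,1}$, $W_{0,2}$ pieces never contribute.

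Next I would evaluate the divisor at the two endpoints. Using $\nu_0+\nu_\infty=1$ one finds $D(\infty)=\nu_0([\infty]-[0])$ and $D(0)=-\nu_\infty([\infty]-[0])$. Writing $E:=[\infty]-[0]$, integration over $E$ is integration over $\tilde{\gamma}$, and multilinearity of the divisor integral gives $\Phi_m(\infty)=\sum_{2g+n-2=m}\frac{\nu_0^{\,n}}{n!}\int_E\!\cdots\!\int_E W_{g,n}$ and $\Phi_m(0)=\sum_{2g+n-2=m}\frac{(-\nu_\infty)^{n}}{n!}\int_E\!\cdots\!\int_E W_{g,n}$. To identify these periods with derivatives of $F_g$ I would apply the variational formula: a direct computation of \eqref{eq:Omega} for the parametrisation \eqref{Weber_parameterization} yields $\Omega_\lambda(z)=\partial_\lambda y\,dx-\partial_\lambda x\,dy=-dz/z$, which equals $\int_{\zeta\in\tilde{\gamma}}\Lambda\,B(z,\zeta)$ for the constant $\Lambda\equiv1$. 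Thus the iterated formula \eqref{eq:iterative-variation} (for $g\geq1$, and for $g=0$ with $n\geq3$ by iterating part (i) of \thmref{VariationFormula} from the base relation for $\partial_\lambda^3 F_0$) gives $\int_E\cdots\int_E W_{g,n}=\partial_\lambda^nF_g(\lambda)$, so that
\[
V(\lambda,\nu;\hbar)=\sum_{\substack{g\geq0,\,n\geq1\\ 2g+n\geq3}}\frac{\hbar^{2g+n-2}}{n!}\bigl(\nu_0^{\,n}-(-\nu_\infty)^{n}\bigr)\frac{\partial^nF_g}{\partial\lambda^n}(\lambda).
\]

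Finally I would match this with the right-hand side. Since $\hat{\lambda}+\hbar/2=\lambda+\hbar\nu_0$ and $\hat{\lambda}-\hbar/2=\lambda-\hbar\nu_\infty$, Taylor expansion in $\hbar$ gives
\[
F\Bigl(\hat{\lambda}+\tfrac{\hbar}{2};\hbar\Bigr)-F\Bigl(\hat{\lambda}-\tfrac{\hbar}{2};\hbar\Bigr)=\sum_{g\geq0,\,n\geq0}\frac{\hbar^{2g+n-2}}{n!}\bigl(\nu_0^{\,n}-(-\nu_\infty)^{n}\bigr)\frac{\partial^nF_g}{\partial\lambda^n}(\lambda).
\]
The summand agrees with that of $V$ exactly on the stable range $2g+n\geq3$, so it remains to account for the ``unstable'' terms $2g+n\leq2$: every $n=0$ term vanishes because $\nu_0^0-(-\nu_\infty)^0=0$, while $(g,n)=(0,1)$ contributes $\hbar^{-1}(\nu_0+\nu_\infty)F_0'=\hbar^{-1}F_0'$ and $(g,n)=(0,2)$ contributes $\frac12(\nu_0^2-\nu_\infty^2)F_0''=-\frac{\nu}{2}F_0''$. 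Subtracting these two correction terms reproduces exactly \eqref{eq:V-and-F-general}. The main obstacle is the careful bookkeeping of the unstable range together with the justification of the variational formula in the $g=0$ case; once these are settled, the argument reduces to a clean telescoping and a Taylor expansion.
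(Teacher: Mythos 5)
Your proposal is correct and takes essentially the same route as the paper's own proof: transporting the Voros integral to the $z$-plane via \thmref{WKB-Wg,n}, telescoping to the endpoint divisor values $D(\infty;\underline{\nu})=\nu_0([\infty]-[0])$ and $D(0;\underline{\nu})=-\nu_\infty([\infty]-[0])$, identifying the resulting periods with $\partial_\lambda^n F_g$ through the variational formula (this is exactly \lemref{Weber_variation}, including the need for the direct check at $(g,n)=(0,3)$ to cover the $g=0$ case), and finally the same bookkeeping of the unstable $(g,n)=(0,1),(0,2)$ terms. No gaps; the argument matches the paper step for step.
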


To prove Theorem \ref{thm:weber:main(i)}, 
we need the following identity.

\begin{lem}
\label{lem:Weber_variation}
\begin{equation}
\label{eq:Weber_variation}
\frac{\partial^n}{\partial\lambda^n} F_g
= \int_{\zeta_1 = 0}^{\zeta_1=\infty}\cdots
\int_{\zeta_n = 0}^{\zeta_n=\infty}
W_{g, n}(\zeta_1, \cdots, \zeta_n)\qquad (2g + n \geq 3).
\end{equation}
\end{lem}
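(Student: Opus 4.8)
The plan is to deduce the identity from the variational formula of Theorem~\ref{thm:VariationFormula}, applied to the family of Weber curves \eqref{Weber_parameterization} regarded as parametrized by $\varepsilon = \lambda$. The whole point is to check that the associated deformation one-form $\Omega_\lambda$ is reproduced by the Bergman kernel with a \emph{constant} density $\Lambda\equiv 1$ integrated along the path from $0$ to $\infty$; once this is established, the iterated formula \eqref{eq:iterative-variation} with $\Lambda=1$ yields exactly the right-hand side of \eqref{eq:Weber_variation}.

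First I would compute the one-form $\Omega_\lambda(z)$ defined in \eqref{eq:Omega}. Differentiating \eqref{Weber_parameterization} in $\lambda$ and using $dx = \sqrt{\lambda}\,\frac{z^2-1}{z^2}\,dz$ and $dy = \frac{\sqrt{\lambda}}{2}\,\frac{z^2+1}{z^2}\,dz$, one finds $\frac{\partial y}{\partial\lambda}\,dx = \frac{1}{4}\frac{(z^2-1)^2}{z^3}\,dz$ and $\frac{\partial x}{\partial\lambda}\,dy = \frac{1}{4}\frac{(z^2+1)^2}{z^3}\,dz$, whence
\begin{equation*}
\Omega_\lambda(z) = \frac{1}{4}\,\frac{(z^2-1)^2 - (z^2+1)^2}{z^3}\,dz = -\frac{dz}{z}.
\end{equation*}
On the other hand, integrating $B(z,\zeta) = dz\,d\zeta/(z-\zeta)^2$ in $\zeta$ along any path from $0$ to $\infty$ gives
\begin{equation*}
\int_{\zeta=0}^{\zeta=\infty} B(z,\zeta) = \left[\frac{dz}{z-\zeta}\right]_{\zeta=0}^{\zeta=\infty} = -\frac{dz}{z} = \Omega_\lambda(z).
\end{equation*}
Hence the third hypothesis of Theorem~\ref{thm:VariationFormula} holds with the constant density $\Lambda_\lambda(\zeta)\equiv 1$ and with the path taken to be $\tilde\gamma$, the path from $0$ to $\infty$ on the $z$-plane shown in Figure~\ref{fig:Weber:stokes} (b).

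Next I would verify the remaining hypotheses. The only ramification points are the simple zeros $z = \pm 1$ of $dx$, while the poles $z = 0,\infty$ of $x(z)$ are simple and hence not ramification points, so the bullet of Theorem~\ref{thm:VariationFormula} concerning higher-order poles of $x(z)$ is vacuous. At $z=\pm1$ the functions $\partial x/\partial\lambda$ and $\partial y/\partial\lambda$ are holomorphic and $dy = \frac{\sqrt\lambda}{2}\frac{z^2+1}{z^2}\,dz$ does not vanish, so the first bullet holds; the assumptions (A1)--(A4) and the constancy of $R=\{1,-1\}$ are immediate for $\lambda\neq0$. With $\Lambda=1$ in hand, the iterated variational formula \eqref{eq:iterative-variation} gives \eqref{eq:Weber_variation} at once for all $g\geq1$, covering every pair with $2g+n\geq3$ and $g\geq1$.

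The one case needing separate treatment is $g=0$, where $2g+n\geq3$ forces $n\geq3$ and where \eqref{eq:iterative-variation} does not apply directly, since $F_0$ is not defined through \eqref{def:Fg2} and Theorem~\ref{thm:VariationFormula} (ii) assumes $g\geq1$. Here I would first settle the base case $n=3$ by hand: from \eqref{eq:Weber-lower-genus-free-energy} one computes $\partial^3 F_0/\partial\lambda^3 = 1/\lambda$, while the explicit $W_{0,3}$ in \eqref{eq:W03-Weber} makes the triple integral factor as $\frac{1}{2\lambda}(I_+^3 - I_-^3)$ with $I_\pm = \int_{\tilde\gamma} d\zeta/(\zeta\pm1)^2 = \pm1$ (the antiderivatives being single-valued, the detour around $z=1$ is immaterial), again equal to $1/\lambda$. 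The remaining $n>3$ would then follow by induction, differentiating the $n$-fold integral in $\lambda$ and invoking Theorem~\ref{thm:VariationFormula} (i) with $\Lambda=1$ to convert $\delta_\lambda W_{0,n}$ into one more integration against $W_{0,n+1}$, exactly as in the derivation of \eqref{eq:iterative-variation}. I expect this $g=0$ bookkeeping --- in particular reconciling the ordinary $\lambda$-derivative of a $z$-fixed iterated integral with the ``fixed-$x$'' derivative $\delta_\lambda$ that enters the variational formula --- to be the only genuinely delicate point, the cases $g\geq1$ being an immediate specialization of \eqref{eq:iterative-variation}.
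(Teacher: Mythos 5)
Your proposal is correct and follows essentially the same route as the paper's own proof: compute $\Omega_\lambda(z) = -dz/z = \int_{\zeta=0}^{\zeta=\infty} B(z,\zeta)$ so that Theorem \ref{thm:VariationFormula} applies with constant density $\Lambda \equiv 1$ along the path from $0$ to $\infty$, conclude via \eqref{eq:iterative-variation} for $g \geq 1$, and settle $g=0$ by a direct check of the case $(g,n)=(0,3)$ using \eqref{eq:W03-Weber} and \eqref{eq:Weber-lower-genus-free-energy} followed by iteration of Theorem \ref{thm:VariationFormula} (i) in $n$. The only difference is that you spell out the verification of the hypotheses of the variational formula and the $(0,3)$ computation, which the paper leaves implicit.
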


\begin{proof}[Proof of Lemma \ref{lem:Weber_variation}]
Because
\begin{equation}
\Omega(z) = \frac{\partial y(z)}{\partial \lambda} \cdot dx(z)
- \frac{\partial x(z)}{\partial \lambda} \cdot dy(z)
= - \frac{dz}{z}
= \int^{\zeta = \infty}_{\zeta = 0} B(z, \zeta)
\end{equation}
holds, Theorem \ref{thm:VariationFormula} 
(and \eqref{eq:iterative-variation})
gives \eqref{eq:Weber_variation}, except for the case $g=0$.
By using the expressions \eqref{eq:W03-Weber} and 
\eqref{eq:Weber-lower-genus-free-energy} of $W_{0,3}$ and $F_0$, 
we can verify \eqref{eq:Weber_variation} holds for for $(g,n) = (0,3)$ directly.
Therefore, thanks to Theorem \ref{thm:VariationFormula},
we can conclude that \eqref{eq:Weber_variation} 
is also valid for $g=0$ and $n \ge 3$. 
This completes the proof.
\end{proof}

\begin{proof}[Proof of Theorem \ref{thm:weber:main(i)}]
By Theorem \ref{thm:WKB-Wg,n},
the Voros coefficient can be rewritten as
\begin{align}
V(\lambda, \nu; \hbar)
&= \sum_{m = 1}^{\infty} \hbar^m \int_0^\infty
\Bigl( S(x(z), \lambda, \nu; \hbar) 
- \hbar^{-1} S_{-1}(x(z), \lambda) 
- S_0(x(z), \lambda, \nu) \Bigr) \frac{dx}{dz} \, dz \\
&= \sum_{m = 1}^{\infty} \hbar^m \int_0^\infty
\left\{ \sum_{\substack{2g + n - 2 = m \\ g \geq 0, \, n \geq 1}}
\frac{1}{n!} \frac{d}{dz} \int_{\zeta_1 \in D(z; \underline{\nu})}
\cdots \int_{\zeta_n \in D(z; \underline{\nu})}
W_{g, n}(\zeta_1, \ldots, \zeta_n) \right\} dz
\notag\\
&= \sum_{m = 1}^{\infty} \hbar^m
\sum_{\substack{2g + n - 2 = m \\ g \geq 0, \, n \geq 1}}
\frac{1}{n!}
\left(
\int_{\zeta_1 \in D(\infty; \underline{\nu})} \cdots \int_{\zeta_n \in D(\infty; \underline{\nu})}
\right.
\notag\\
&\qquad\qquad\qquad\qquad\qquad\qquad
\left.
-
\int_{\zeta_1 \in D(0; \underline{\nu})} \cdots \int_{\zeta_n \in D(0; \underline{\nu})}
\right)
W_{g, n}(\zeta_1, \ldots, \zeta_n).
\notag
\end{align}
Because
\begin{equation}
D(\infty; \underline{\nu}) = \nu_0 ([\infty] - [0])
\quad\text{and}\quad
D(0; \underline{\nu}) = - \nu_{\infty} ([\infty] - [0]),
\end{equation}
we have
\begin{equation}
V(\lambda, \nu; \hbar)
= \sum_{m = 1}^{\infty} \hbar^m \sum_{\substack{2g + n - 2 = m \\ g \geq 0, \, n \geq 1}}
\frac{{\nu_0}^n - (- \nu_{\infty})^n}{n!} \int_0^\infty \cdots \int_0^\infty W_{g, n}(\zeta_1, \ldots, \zeta_n) .
\end{equation}
Now we use Lemma \ref{lem:Weber_variation}:
\begin{align}
V(\lambda, \nu; \hbar)
&= \sum_{m = 1}^{\infty} \hbar^m \sum_{\substack{2g + n - 2 = m \\ g \geq 0, \, n \geq 1}}
\frac{{\nu_0}^n - (-\nu_{\infty})^n}{n!}
\frac{ \partial^n F_g }{ \partial \lambda^n } \\
&= \sum_{n = 1}^{\infty}
\frac{{\nu_0}^n - (-\nu_{\infty})^n}{n!}
\hbar^n
\frac{ \partial^n F(\lambda; \hbar) }{ \partial \lambda^n }
- \frac{\nu_0 - (- \nu_{\infty})}{\hbar}\frac{\partial F_0}{\partial \lambda}
- \frac{{\nu_0}^2 - (-\nu_{\infty})^2}{2!} \frac{\partial^2
 F_0}{\partial \lambda^2}
\notag\\
&= F \left(\lambda + \nu_0 \hbar; \hbar \right)
- F \left(\lambda -\nu_{\infty}\hbar; \hbar \right)
- \frac{\nu_0 + \nu_{\infty}}{\hbar}\frac{\partial F_0}{\partial \lambda}
- \frac{{\nu_0}^2 - {\nu_{\infty}}^2}{2} \frac{\partial^2 F_0}{\partial \lambda^2}.
\notag
\end{align}
Since
$\nu_0 = (1 - \nu)/2$ and
$\nu_{\infty} = (1 + \nu)/2$,
we obtain \thmref{weber:main(i)}.
\end{proof}


\begin{rem} \label{rem:regularization}
In the definition (\ref{Weber_Voros}) of the Voros coefficient, we subtracted the first two terms 
$\hbar^{-1}S_{-1}$ and $S_0$ because these terms are singular at end points of the path $\gamma$. 
However, a regularization procedure of divergent integral (see \cite{Voros-zeta} for example) 
allows us to define the regularized Voros coefficient: 
\begin{equation} 
V_{\rm reg}(\lambda,\nu;\hbar) 
:= 
\hbar^{-1}V_{-1}(\lambda,\nu) 
+ V_0(\lambda,\nu)
+ V(\lambda,\nu;\hbar), 
\end{equation}
where 
$V_{-1}(\lambda,\nu)$ and $V_0(\lambda,\nu)$ are obtained by solving 
\begin{equation} \label{eq:zeta-regularization-equation}
\frac{\partial^2}{\partial \lambda^2} V_{-1} 
= 
\int_{\gamma} \frac{\partial^2}{\partial \lambda^2} S_{-1}(x) \, dx, \quad 
\frac{\partial}{\partial \lambda} V_{0}
 = 
\int_{\gamma} \frac{\partial}{\partial \lambda} S_{0}(x) \, dx.
\end{equation}
Actually, we can verify that $\partial_{\lambda}^2 S_{-1}(x) dx$ and 
$\partial_{\lambda}S_0(x) dx$ are holomorphic at $x=\infty$ although  
$S_{-1}$ and $S_0$ are singular there.
Hence, the equations in \eqref{eq:zeta-regularization-equation} make sense
and are solved explicitly by 
\begin{align}
V_{-1} = \lambda \log{\lambda} - \lambda, \qquad 
V_{0} = - \frac{\nu}{2} \log{\lambda}. 
\end{align}
Actually, we can verify that the regularized integrals are realized by the correction terms 
\begin{equation}
V_{-1} = \frac{\partial F_0}{\partial \lambda}, \qquad
V_{0} = - \frac{\nu}{2} \frac{\partial^2 F_0}{\partial \lambda^2} 
\end{equation}
in the right hand-side of the relation \eqref{eq:V-and-F-general}.
Thus we can conclude that the regularized Voros coefficient satisfies 
\begin{equation} \label{eq:Vreg-and-free-energy}
V_{\rm reg}(\lambda,\nu;\hbar) 
= 
F \left(\hat{\lambda} +\frac{\hbar}{2}; \hbar \right)
- F \left(\hat{\lambda} - \frac{\hbar}{2} ; \hbar \right). 
\end{equation}
\end{rem}


\subsection{Three-term difference equations satisfied by the free energy}
\label{subsec:Three-term_difference-eq.}


In this subsection, we derive the three-term difference equation 
which the generating function of the free energies satisfies. 
The precise statement is formulated as follows.

\begin{thm}
\label{thm:weber:main(ii)}
The free energy \eqref{eq:total-free-energy-Weber} satisfies 
the following difference equation.
\begin{equation}
\label{eq:Weber_free-energy_difference-eq.}
F(\lambda + \hbar; \hbar) - 2 F(\lambda; \hbar) + F(\lambda - \hbar; \hbar)
= \frac{\partial^2 F_0}{\partial \lambda^2}
\; \Big(= \log{\lambda}\Big).
\end{equation}
\end{thm}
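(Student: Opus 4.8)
The plan is to read the three–term difference equation off the main relation \thmref{weber:main(i)}, specialized to two values of the parameter $\nu$. The cleanest entry point is the regularized form recorded in \remref{regularization}, namely $V_{\mathrm{reg}}(\lambda,\nu;\hbar)=F(\hat\lambda+\tfrac{\hbar}{2};\hbar)-F(\hat\lambda-\tfrac{\hbar}{2};\hbar)$ with $\hat\lambda=\lambda-\tfrac{\hbar\nu}{2}$. Taking $\nu=-1$ makes $\hat\lambda=\lambda+\tfrac{\hbar}{2}$, hence $F(\lambda+\hbar;\hbar)-F(\lambda;\hbar)=V_{\mathrm{reg}}(\lambda,-1;\hbar)$, while $\nu=+1$ makes $\hat\lambda=\lambda-\tfrac{\hbar}{2}$, hence $F(\lambda;\hbar)-F(\lambda-\hbar;\hbar)=V_{\mathrm{reg}}(\lambda,1;\hbar)$. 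Subtracting these two identities expresses the second difference of $F$ purely through Voros coefficients:
\[
F(\lambda+\hbar;\hbar)-2F(\lambda;\hbar)+F(\lambda-\hbar;\hbar)=V_{\mathrm{reg}}(\lambda,-1;\hbar)-V_{\mathrm{reg}}(\lambda,1;\hbar).
\]

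Next I would substitute the explicit regularization terms from \remref{regularization}, i.e. $V_{-1}=\partial_\lambda F_0$ (independent of $\nu$) and $V_0=-\tfrac{\nu}{2}\partial_\lambda^2F_0$, into the decomposition $V_{\mathrm{reg}}=\hbar^{-1}V_{-1}+V_0+V$. The $\hbar^{-1}V_{-1}$ terms cancel in the difference, and the $V_0$ terms contribute $V_0(\lambda,-1)-V_0(\lambda,1)=\partial_\lambda^2F_0=\log\lambda$ (using $\partial_\lambda^2F_0=\log\lambda$ from \eqref{eq:Weber-lower-genus-free-energy}). Thus the right-hand side equals $\log\lambda+\bigl(V(\lambda,-1;\hbar)-V(\lambda,1;\hbar)\bigr)$, and the theorem is reduced to the single identity $V(\lambda,1;\hbar)=V(\lambda,-1;\hbar)$ for the genuine (non-regularized) Voros coefficient.

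To finish, I would prove this last identity using a parity relation. One has $V(\lambda,-\nu;\hbar)=-V(\lambda,\nu;-\hbar)$: this follows from \thmref{weber:main(i)} together with the evenness of $F(\cdot;\hbar)$ in $\hbar$ (only powers $\hbar^{2g-2}$ occur), and it can also be seen directly from the conjugation $z\mapsto\overline z=1/z$, which interchanges the divisors $[z]-[\infty]$ ($\nu=1$) and $[z]-[0]$ ($\nu=-1$), combined with $W_{g,n}(\overline z_1,\dots)=-W_{g,n}(z_1,\dots)$ from \thmref{TRprop}(iii). Granting this parity, the desired identity $V(\lambda,1;\hbar)=V(\lambda,-1;\hbar)$ is equivalent to $V(\lambda,1;\hbar)$ being an \emph{odd} function of $\hbar$, i.e. to the vanishing $V_m(\lambda,1)=0$ for every even $m$.

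The hard part is exactly this vanishing, which is genuinely new information: the parity relation above (hence everything derivable from \thmref{weber:main(i)} alone) is consistent with any even-in-$\hbar$ choice of $F$, so the oddness of $V(\lambda,1;\hbar)$ must be imported from an independent computation of the Weber Voros coefficient. I would obtain it from the Riccati recursion \eqref{eq:Riccati-gen-1}--\eqref{eq:Riccati-gen-3}: homogeneity (\propref{weber:homogeneous}, which gives $V_m(\lambda,\nu)=\lambda^{-m}V_m(1,\nu)$) reduces the computation to $\lambda=1$, and solving the recursion expresses $V_m(1,\nu)$ through a Bernoulli polynomial $B_{m+1}\!\left(\tfrac{1+\nu}{2}\right)$. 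At $\nu=1$ the even-order coefficients are then proportional to $B_{m+1}(1)=B_{m+1}$ with $m+1$ odd, and these odd-index Bernoulli numbers vanish (consistent with the direct check $V_2(\lambda,1)=0$ using the values of $F_0,F_1$ in \eqref{eq:Weber-lower-genus-free-energy}). This explicit parity computation of the Voros coefficient, rather than the formal reduction in the first three paragraphs, is what I expect to be the main obstacle.
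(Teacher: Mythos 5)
Your reduction is correct, and it essentially coincides with the paper's own first step: combining \thmref{weber:main(i)} at $\nu=\pm 1$ (equivalently, the regularized identity \eqref{eq:Vreg-and-free-energy}) shows that the three-term equation is equivalent to the single identity $V(\lambda,1;\hbar)=V(\lambda,-1;\hbar)$, and your parity relation $V(\lambda,-\nu;\hbar)=-V(\lambda,\nu;-\hbar)$ is also valid. The genuine gap is that you never prove this identity. In the paper it is the content of \lemref{Weber_Voros-parameter}: starting from the ladder-operator relation $\bigl(\tfrac{\partial}{\partial x}+\tfrac{x}{2\hbar}\bigr)\psi(\nu)=\mathrm{(const.)}\times\psi(\nu+2)$, taking logarithmic derivatives, integrating along $\gamma$, and extracting the endpoint asymptotics, one obtains
\begin{equation*}
V(\lambda,\nu+2;\hbar)-V(\lambda,\nu;\hbar)=-\log\Bigl(1-\frac{\nu+1}{2\lambda}\hbar\Bigr),
\end{equation*}
whose right-hand side vanishes precisely at $\nu=-1$; that vanishing is the nontrivial input your argument is missing.

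Your proposed substitute for this step --- ``solving the Riccati recursion'' to get $V_m(1,\nu)\propto B_{m+1}\bigl((1+\nu)/2\bigr)$ and then using the vanishing of odd-index Bernoulli numbers --- does not work as a proof. The Riccati recursion \eqref{eq:Riccati-gen-1}--\eqref{eq:Riccati-gen-3} produces the $S_m$ (hence the $V_m$) one order at a time as rational/algebraic expressions; it gives no closed form, and establishing the Bernoulli-polynomial expression for all $m$ is exactly the hard theorem, not a computation one can defer. Within this paper that expression is Theorem \ref{cor:weber:voros}, which is deduced \emph{from} \thmref{weber:main(ii)} (the statement you are proving) together with \thmref{weber:main(i)}, so invoking it here would be circular; and the independent derivations in the literature (e.g.\ \cite{Takei08}, cited in the paper) obtain it by precisely the ladder-operator/difference-equation argument of \lemref{Weber_Voros-parameter}. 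So the proposal correctly isolates where the difficulty sits --- as you yourself acknowledge in the last paragraph --- but it leaves that step unproven, and the sketch offered for it would either be circular or amount to reproving the paper's key lemma.
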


To prove Theorem \ref{thm:weber:main(ii)}, 
we need the following identity.

\begin{lem}
\label{lem:Weber_Voros-parameter}
\begin{equation}
\label{eq:Weber_Voros-difference}
V(\lambda, \nu + 2; \hbar)
- V(\lambda, \nu; \hbar) =
- \log \left(1 - \frac{\nu + 1}{2\lambda} \hbar\right).
\end{equation}
\end{lem}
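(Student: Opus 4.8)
The plan is to exploit a ladder (shift) relation connecting the Weber equation \eqref{Weber_eq} at parameters $\nu$ and $\nu+2$, which converts the left-hand side of \eqref{eq:Weber_Voros-difference} into a pure boundary term. Writing \eqref{Weber_eq} as $H\psi = \hat{\lambda}\,\psi$ with $H = -\hbar^2\, d^2/dx^2 + x^2/4$ and $\hat{\lambda} = \lambda - \hbar\nu/2$, the first-order operators $a_{\pm} = \mp\hbar\, d/dx + x/2$ satisfy $a_+ a_- = H - \hbar/2$ and hence $H a_- = a_-(H - \hbar)$. Therefore, if $\psi$ solves $H\psi = \hat{\lambda}\psi$, then $a_-\psi = (\hbar\, d/dx + x/2)\psi$ solves $H(a_-\psi) = (\hat{\lambda} - \hbar)(a_-\psi)$; since $\hat{\lambda} - \hbar = \lambda - \hbar(\nu+2)/2$, the function $a_-\psi$ is a solution of \eqref{Weber_eq} with $\nu$ replaced by $\nu+2$. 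Applying this to the WKB solution $\psi = \exp\int^x S\,dx$ of parameter $\nu$ and taking logarithmic derivatives, I obtain the exact all-order identity
\[
S(x,\lambda,\nu+2;\hbar) - S(x,\lambda,\nu;\hbar) = \frac{d}{dx}\log\!\left(\hbar\,S(x,\lambda,\nu;\hbar) + \frac{x}{2}\right),
\]
whose right-hand side is an honest formal power series in $\hbar$ because its leading term $S_{-1} + x/2$ is nonzero (it equals $\sqrt{\lambda}\,z$ in the parametrization \eqref{Weber_parameterization}); in particular $a_-\psi$ carries the same branch of $S_{-1}$. This identity may alternatively be verified by direct substitution into the Riccati equation for parameter $\nu+2$.

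Next I would insert this into the definition \eqref{Weber_Voros} of the Voros coefficient. Since $S_{-1}$ is independent of $\nu$ and, by the recursion, $S_0(x,\lambda,\nu+2) - S_0(x,\lambda,\nu) = 1/(2S_{-1})$, the subtracted terms contribute only through the latter, giving
\[
V(\lambda,\nu+2;\hbar) - V(\lambda,\nu;\hbar) = \int_{\gamma}\left[\frac{d}{dx}\log\!\left(\hbar S + \frac{x}{2}\right) - \frac{1}{2S_{-1}}\right] dx.
\]
Because $1/(2S_{-1}) = \frac{d}{dx}\log(x + 2S_{-1})$, the entire integrand is a total derivative, so the integral collapses to the value of $\log\big[(\hbar S + x/2)/(x + 2S_{-1})\big]$ at the endpoints of $\gamma$. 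In the $z$-coordinate the two algebraic factors simplify: $S_{-1} + x/2 = \sqrt{\lambda}\,z$ and $x + 2S_{-1} = 2\sqrt{\lambda}\,z$, so the ratio equals $\tfrac12\big(1 + (\hbar S_0 + \hbar^2 S_1 + \cdots)/(\sqrt{\lambda}\,z)\big)$, and the common factor $\tfrac12$ cancels between the two endpoints $z=0$ and $z=\infty$ of the lifted path $\tilde{\gamma}$.

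It remains to evaluate the two endpoints, which I expect to be the main obstacle, since it must be done term by term in the formal $\hbar$-expansion. Using Proposition \ref{prop:weber:Sn}(i), each $S_m(x(z))$ with $m\ge 1$ vanishes to order $\ge 2m+1$ in $z$ at $z=0$ and in $1/z$ at $z=\infty$, so $(\hbar S_0 + \hbar^2 S_1 + \cdots)/(\sqrt{\lambda}\,z)\to 0$ as $z\to\infty$ and the upper endpoint contributes nothing. At $z=0$ only the $m=0$ term survives the division by $z$: from the explicit $S_0(x,\lambda,\nu) = -x/(2(x^2-4\lambda)) + \nu/(2\sqrt{x^2-4\lambda})$ one computes $S_0(x(z)) = -\tfrac{(1+\nu)}{2\sqrt{\lambda}}\,z + O(z^2)$ as $z\to 0$, whence $(\hbar S_0 + \cdots)/(\sqrt{\lambda}\,z)\to -\tfrac{(\nu+1)}{2\lambda}\hbar$. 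Taking the logarithm coefficientwise in $\hbar$, the lower endpoint yields $\log\!\big(1 - \tfrac{\nu+1}{2\lambda}\hbar\big)$; since $\gamma$ runs from $z=0$ to $z=\infty$, the boundary term equals (value at $\infty$) $-$ (value at $0$) $= -\log\!\big(1 - \tfrac{\nu+1}{2\lambda}\hbar\big)$, which is precisely \eqref{eq:Weber_Voros-difference}. The only genuine care needed is the justification that the $z\to 0$ limit commutes with the $\hbar$-expansion of the logarithm, which follows from the uniform order estimates on the $S_m$.
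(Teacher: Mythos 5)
Your proof is correct and follows essentially the same route as the paper: the ladder relation $(\hbar\, d/dx + x/2)\psi(\nu) \propto \psi(\nu+2)$, taking logarithmic derivatives to get $S(x,\nu+2)-S(x,\nu) = \frac{d}{dx}\log(\hbar S + x/2)$, and then evaluating boundary contributions at the two ends of $\gamma$ (the paper's footnote even records the same primitive $\log(x+\sqrt{x^2-4\lambda}) = \log(x+2S_{-1})$ for the $S_0$-difference). Your only departure is organizational — you fold the $S_0$-subtraction into a single exact derivative and evaluate the ratio in the $z$-coordinate, whereas the paper keeps the two pieces separate and uses endpoint asymptotics in $x$ with error terms — but the substance is identical and your endpoint computations are accurate.
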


\begin{proof}[Proof of Lemma \ref{lem:Weber_Voros-parameter}]
The following proof of this lemma uses the same idea with that in \cite{Takei08}, 
while the resulting equation \eqref{eq:Weber_Voros-difference}
has a simpler form because
we only consider the difference of Voros coefficients
with respect to $\nu$ (not $\lambda$).

We can easily verify that 
\begin{equation}
\left(\frac{\partial}{\partial x} + \frac{x}{2 \hbar}\right) \psi(\nu) 
= {\rm (const.)} \times \psi(\nu+2)
\end{equation}
holds (cf. \cite{Takei08}). 
By taking the logarithmic derivatives of both sides, we obtain
\begin{equation}
S(x, \nu + 2) - S(x, \nu)
= \frac{d}{dx} \log
\left( \frac{x}{2\hbar} + S(x, \nu)\right).
\end{equation}
By integration, we get
\begin{align}
&\int^x_{\check{x}} \big\{S_0(x, \nu + 2) - S_0(x, \nu)\} dx
+
V^{x, \check{x}} (\nu + 2)
-
V^{x, \check{x}} (\nu)
\\
&\qquad\qquad
=
\log \left(\frac{x}{2\hbar} + S(x, \nu)\right)
-
\log \left(\frac{\check{x}}{2\hbar} + S(\check{x}, \nu)\right),
\notag
\end{align}
where
\begin{equation}
V^{x, \check{x}} (\nu)
=
\int^x_{\check{x}} \big(S(x, \nu ) - \hbar^{-1}S_{-1}(x) - S_0(x, \nu) \big) dx,
\end{equation}
and $x$ (resp., $\check{x}$) is a point on $\gamma$ which will be taken a
limit as $x$ tends to the end point (resp., the initial point) of $\gamma$.
It follows from \eqref{eq:Weber_branch}
and Proposition \ref{prop:weber:Sn} (i) that
\begin{equation}
\frac{x}{2\hbar} + S(x, \nu)
= \frac{x}{\hbar} \left(1 + O(|x|^{-2})\right)
\end{equation}
as $x$ tends to the end point of $\gamma$. Hence we have
\begin{equation}
\log \left(\frac{x}{2\hbar} + S(x, \nu)\right)
= \log \frac{x}{\hbar} + O(|x|^{-2}).
\end{equation}
In a similar manner, it follows from
\begin{equation}
S_{-1}(\check{x}) \sim - \frac{\check{x}}{2} + \frac{\lambda}{\check{x}} + O(|\check{x}|^{-2}),
\quad
S_0 (\check{x}) \sim - \frac{\nu + 1}{2\check{x}} + O(|\check{x}|^{-2})
\end{equation}
(care is needed for the branch, cf. \eqref{eq:Weber_branch})
and Proposition \ref{prop:weber:Sn} (i) that
\begin{equation}
\log \left(\frac{\check{x}}{2\hbar} + S(\check{x}, \nu)\right)
=
\log \left(\lambda - \frac{\nu + 1}{2}\hbar \right) 
- \log (\hbar \check{x}) + O(|\check{x}|^{-1})
\end{equation}
as $\check{x}$ tends to the end point of $\gamma$. Hence
\begin{align}
&\int^x_{\check{x}} \big\{S_0(x, \nu + 2) - S_0(x, \nu)\} dx
+
V^{x, \check{x}} (\nu + 2)
-
V^{x, \check{x}} (\nu)
\\
&\qquad\qquad
=
\log (x \check{x}) - \log \left(\lambda - \frac{\nu + 1}{2} \hbar 
 \right) + O(|x|^{-2}) + O(|\check{x}|^{-1}).
\notag
\end{align}
Because $S_0(x, \nu + 2)$ and $S_0(x, \nu)$ do not depend on $\hbar$,
and $V^{x, \check{x}} (\nu + 2)$ and $V^{x, \check{x}} (\nu)$ do not
contain constant term with respect to $\hbar$, we finally
obtain\footnote{%
Note that \eqref{eq:Weber_Voros-parameter:tmp1} also follows from
the indefinite integral
$$
\int^x \big\{S_0(x, \nu + 2) - S_0(x, \nu)\} dx
= \int^x \frac{dx}{\sqrt{x^2 - 4\lambda}}
= \log \left(x + \sqrt{x^2 - 4\lambda} \right).
$$
}
\begin{align}
\label{eq:Weber_Voros-parameter:tmp1}
\int^x_{\check{x}} \big\{S_0(x, \nu + 2) - S_0(x, \nu)\} dx
&= \log (x \check{x}) - \log \lambda + O(|x|^{-2}) + O(|\check{x}|^{-1}),
\\
\label{eq:Weber_Voros-parameter:tmp12}
V^{x, \check{x}} (\nu + 2) - V^{x, \check{x}} (\nu)
&= - \log \left(1 -  \frac{\nu + 1}{2\lambda} \hbar \right)
+ O(|x|^{-2}) + O(|\check{x}|^{-1}).
\end{align}
The last equality \eqref{eq:Weber_Voros-parameter:tmp12}
proves Lemma \ref{lem:Weber_Voros-parameter} 
after taking the limit $x, \hat{x} \to \infty$ 
along $\gamma$. 
\end{proof}

\begin{proof}[Proof of Theorem \ref{thm:weber:main(ii)}]
By Lemma \ref{lem:Weber_Voros-parameter}, we have
\begin{equation}
\label{thm:weber:main:tmpeq}
V(\lambda, \nu; \hbar)\big|_{\nu = 1}
=
V(\lambda, \nu; \hbar)\big|_{\nu = -1}.
\end{equation}
It follows from \thmref{weber:main(i)} that
\begin{eqnarray}
V(\lambda, \nu; \hbar)\big|_{\nu = 1}
& = &
F \left(\lambda ; \hbar \right)
- F \left(\lambda - \hbar; \hbar \right)
- \frac{\partial F_0}{\partial \lambda} \hbar^{-1}
+ \frac{ 1 }{2} \frac{\partial^2 F_0}{\partial \lambda^2}, \\
V(\lambda, \nu; \hbar)\big|_{\nu = -1}
& = &
F \left(\lambda + \hbar; \hbar \right)
- F \left(\lambda; \hbar \right)
- \frac{\partial F_0}{\partial \lambda} \hbar^{-1}
- \frac{ 1 }{2} \frac{\partial^2 F_0}{\partial \lambda^2}.
\end{eqnarray}
By substituting these two relations
into \eqref{thm:weber:main:tmpeq}, we obtain \thmref{weber:main(ii)}.
\end{proof}


\subsection{The concrete form of the free energy}
\label{subsec:FreeEnergy}


As a corollary of the main result, we can find explicit formulas 
for the coefficients of the free energy and the Voros coefficient. 
In this subsection, we derive the explicit form of the free energy. 
For the explicit form of the Voros coefficient, see the next subsection. 

\begin{thm}[cf.\,{\cite{HZ}; see also \cite{IM}}]
\label{cor:weber:FreeEnergy}
The $g$-th free energy of the Weber curve \eqref{Weber_P(x,y)}
is given explicitly as follows:
\begin{equation} \label{eq:Weber-free-energy-explicit}
F_g(\lambda) = \dfrac{B_{2g}}{2g(2g - 2)} \dfrac{1}{\lambda^{2g-2}} \quad (g \geq 2),
\end{equation}
where $\{B_n\}_{n \geq 0}$ designates the Bernoulli number defined 
through the generating function as
\begin{equation}
\label{def:Bernoulli}
\frac{w}{e^w - 1} = \sum_{n = 0}^{\infty} B_n \frac{w^n}{n!}.
\end{equation}
\end{thm}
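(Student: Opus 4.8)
The plan is to substitute the generating series $F(\lambda;\hbar)=\sum_{g\ge0}\hbar^{2g-2}F_g(\lambda)$ into the three-term difference equation of \thmref{weber:main(ii)} and extract a recurrence for the coefficients $F_g$. Expanding $F(\lambda\pm\hbar;\hbar)$ by Taylor's theorem gives
\begin{equation}
F(\lambda+\hbar;\hbar)-2F(\lambda;\hbar)+F(\lambda-\hbar;\hbar)
=\sum_{g\ge0}\sum_{k\ge1}\frac{2}{(2k)!}\,\hbar^{2g-2+2k}\,F_g^{(2k)}(\lambda),
\end{equation}
and comparing the coefficient of $\hbar^{2m}$ with the right-hand side $\log\lambda$ (which sits in degree $\hbar^{0}$) recovers $F_0''=\log\lambda$ at $m=0$, consistent with \eqref{eq:Weber-lower-genus-free-energy}, and yields for every $m\ge1$ the relation
\begin{equation}
\sum_{g=0}^{m}\frac{1}{(2m-2g+2)!}\,F_g^{(2m-2g+2)}(\lambda)=0.
\end{equation}
The top term here is $\tfrac12 F_m''(\lambda)$, so the recurrence determines $F_m''$ from $F_0,\dots,F_{m-1}$. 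Invoking the homogeneity $F_g(\lambda)=F_g(1)\,\lambda^{-(2g-2)}$ of \propref{weber:homogeneous}, each $F_g$ with $g\ge2$ is a monomial whose second derivative has a nonvanishing coefficient; hence the recurrence, together with the explicit $F_0,F_1$ from \eqref{eq:Weber-lower-genus-free-energy}, determines every $F_g$ uniquely by induction on $g$.

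It then remains to exhibit a closed solution and match it to the claimed formula. I would recognize the difference operator as $e^{\hbar\partial_\lambda}-2+e^{-\hbar\partial_\lambda}=\bigl(2\sinh(\tfrac{\hbar}{2}\partial_\lambda)\bigr)^{2}$ and expand its formal inverse using \eqref{def:Bernoulli}. Starting from $\frac{w}{e^w-1}+\frac{w}{2}=\frac{w}{2}\coth\frac{w}{2}$ and $\frac{1}{\sinh^2(w/2)}=-2\,\frac{d}{dw}\coth(w/2)$, one derives
\begin{equation}
\frac{1}{4\sinh^2(w/2)}=\frac{1}{w^2}-\sum_{n\ge1}\frac{B_{2n}}{2n\,(2n-2)!}\,w^{2n-2}.
\end{equation}
Applying this with $w=\hbar\partial_\lambda$ to $\log\lambda$, the leading term $\hbar^{-2}\partial_\lambda^{-2}\log\lambda$ reproduces $\hbar^{-2}F_0$, the $n=1$ term gives $-\tfrac{B_2}{2}\log\lambda=F_1$, and for $n=g\ge2$ the identity $\partial_\lambda^{2g-2}\log\lambda=-(2g-3)!\,\lambda^{-(2g-2)}$ together with $\tfrac{(2g-3)!}{(2g-2)!}=\tfrac{1}{2g-2}$ turns the $n=g$ term into $\frac{B_{2g}}{2g(2g-2)}\hbar^{2g-2}\lambda^{-(2g-2)}$. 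Thus the series $\sum_g\hbar^{2g-2}\,\frac{B_{2g}}{2g(2g-2)}\lambda^{-(2g-2)}$, carrying the correct $F_0,F_1$ as its two lowest terms, solves the difference equation, and by the uniqueness established above it must coincide with $F$, which gives \eqref{eq:Weber-free-energy-explicit}.

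The main obstacle I anticipate is the rigorous handling of this formal operator calculus. Because the inverse of the difference operator lowers the $\hbar$-degree by two, one must work in Laurent series in $\hbar$ and check that $\tfrac{1}{4\sinh^2(\hbar\partial_\lambda/2)}$ is well defined on $\log\lambda$; in particular the ambiguity in the double antiderivative $\partial_\lambda^{-2}\log\lambda$ is exactly the freedom of adding solutions of the homogeneous equation, and it is the input data $F_0,F_1$ from \eqref{eq:Weber-lower-genus-free-energy} that selects the correct primitive. A secondary point is verifying the Bernoulli expansion of $1/(4\sinh^2(w/2))$ itself. Alternatively, and perhaps more cleanly for a self-contained argument, one can avoid the operator inversion altogether: substitute the claimed monomials directly into the recurrence and, after cancelling the common factor $\lambda^{-2m}$, reduce it to a pure identity among the $B_{2n}$ that follows from \eqref{def:Bernoulli}. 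I expect the bookkeeping of this Bernoulli identity to be the only genuine computation.
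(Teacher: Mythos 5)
Your proposal is correct and follows essentially the same route as the paper: both invert the three-term difference equation of \thmref{weber:main(ii)} by expanding the inverse shift operator via the Bernoulli generating function (your $4\sinh^2(w/2)$ is exactly the paper's $e^{-w}(e^{w}-1)^2$, and your expansion of its inverse coincides term-by-term with the paper's $\frac{1}{w^2}-\sum_{n\ge 0}\frac{B_{n+2}}{n+2}\frac{w^n}{n!}$), and both fix the ambiguity of the formal inverse using the homogeneity of \propref{weber:homogeneous} together with the explicit $F_0,F_1$. The only cosmetic difference is that you phrase uniqueness through the coefficient recurrence for $F_m''$, whereas the paper applies the same reasoning to the difference $G=F-\hat F$ solving the homogeneous equation.
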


\begin{proof}
By using a shift operator 
(or an infinite order differential operator)
$e^{\hbar\partial_{\lambda}}$,
the equation (\ref{eq:Weber_free-energy_difference-eq.}) 
in Theorem \ref{thm:weber:main(ii)} becomes
\begin{equation}
\label{prop:difference-eq:sol:tmp:1}
e^{-\hbar\partial_{\lambda}} (e^{\hbar\partial_{\lambda}} - 1)^2
F = \log \lambda.
\end{equation}
It follows from
\begin{equation}
e^{-w} (e^w - 1)^2
\left\{
\frac{1}{w^2}
- \sum_{n = 0}^{\infty} \frac{B_{n + 2}}{\, n + 2 \,} \frac{\, w^n \,}{\, n! \,}
\right\}
= 1
\end{equation} 
(which follows from the definition \eqref{def:Bernoulli} of the Bernoulli numbers) 
that
\begin{equation}
e^{-\hbar\partial_{\lambda}} (e^{\hbar\partial_{\lambda}} - 1)^2
\left\{
(\hbar\partial_{\lambda})^{-2}
- \sum_{n = 0}^{\infty} \frac{B_{n + 2}}{\, n + 2 \,} 
\frac{\, (\hbar\partial_{\lambda})^n \,}{\, n! \,}
\right\}
= {\rm{id}}.
\end{equation}
Hence we find that
\begin{align}
\label{weber:sol:FreeEnergy}
\hat{F}(\lambda;\hbar)
&:=
\left\{
(\hbar\partial_{\lambda})^{-2}
- \sum_{n = 0}^{\infty} \frac{B_{n + 2}}{\, n + 2 \,} 
\frac{\, (\hbar\partial_{\lambda})^n \,}{\, n! \,}
\right\}
\log \lambda \\
&= 
(\hbar \partial_{\lambda})^{-2} \log \lambda
- \frac{1}{12} \log \lambda
+ \sum_{g = 2}^{\infty} \frac{B_{2g}}{2g(2g-2)} \frac{\hbar^{2g - 2}}{\lambda^{2g-2}} 
\notag
\end{align}
is a solution of \eqref{eq:Weber_free-energy_difference-eq.}.
Here we note that, 
\begin{equation}
\frac{\partial^2 F_0}{\partial \lambda^2} = \log \lambda
\end{equation}
holds by \eqref{eq:Weber-lower-genus-free-energy}, 
and hence, we may choose $\hbar^{-2} F_0$ 
as the top term in \eqref{weber:sol:FreeEnergy}.

Since $F$ and $\hat{F}$ satisfies the same difference equation 
\eqref{eq:Weber_free-energy_difference-eq.}, 
their difference $G := F - \hat{F} = 
\sum_{g=2}^{\infty} \hbar^{2g-2} G_{g}(\lambda)$ satisfies 
\begin{equation}
G(\lambda + \hbar; \hbar) - 2G(\lambda; \hbar) + G(\lambda - \hbar; \hbar) = 0.
\end{equation}
This relation implies that, the each coefficient $G_{g}(\lambda)$ 
of $G$ must satisfies $\partial_\lambda^2 G_{g} = 0$.
Therefore, each term of $G$ must be a linear in $\lambda$. 
However, due to the homogeneity (Proposition \ref{prop:weber:homogeneous}), 
$F_g - \hat{F}_g$ must be zero for all $g$. 
This shows the desired equality \eqref{eq:Weber-free-energy-explicit}.
\end{proof}


\subsection{The concrete form of the Voros coefficient}
\label{subsec:voros}


Here we derive the concrete form of the Voros coefficient. 
Specifically, we prove the following corollary.
(Note that the special case $\nu=0$ of the claim has 
already obtained in \cite{Voros83, SS, Takei08}.) 

\begin{thm}
\label{cor:weber:voros}
The Voros coefficient of the quantum Weber curve \eqref{Weber_eq}
is given explicitly as follows:
\begin{equation}
V(\lambda, \nu; \hbar)
= \sum_{m = 1}^{\infty} \frac{B_{m + 1}\big( (\nu+1)/2 \big)}{m (m + 1)} 
\left( \frac{\hbar}{\lambda} \right)^{m}.
\end{equation}
Here $B_m(t)$ is the Bernoulli polynomial defined through the generating function as
\begin{equation}
\label{def:BernoulliPoly}
\frac{w e^{X w}}{e^w - 1} = \sum_{m = 0}^{\infty} B_m(X) \frac{w^m}{m!}.
\end{equation}
\end{thm}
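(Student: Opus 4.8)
The plan is to combine the main relation of Theorem \ref{thm:weber:main(i)} with the explicit free energy formula from Theorem \ref{cor:weber:FreeEnergy}, and then recognize the resulting generating series as Bernoulli polynomials via their defining generating function \eqref{def:BernoulliPoly}. First I would substitute the closed form $F_g(\lambda) = B_{2g}/(2g(2g-2)) \cdot \lambda^{-(2g-2)}$ (for $g \ge 2$), together with the known $F_0, F_1$ from \eqref{eq:Weber-lower-genus-free-energy}, into the right-hand side of \eqref{eq:V-and-F-general}. The key simplification is that the $F_0$ correction terms $-\hbar^{-1}\partial_\lambda F_0 + (\nu/2)\partial_\lambda^2 F_0$ are designed precisely to cancel the singular and $\nu$-linear contributions coming from the two shifted free energies, so I expect the difference $F(\hat\lambda + \hbar/2; \hbar) - F(\hat\lambda - \hbar/2; \hbar)$ to collapse into a clean power series in $\hbar/\lambda$.

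\medskip
The cleanest route is to work with the regularized form \eqref{eq:Vreg-and-free-energy}, namely
\begin{equation}
V_{\rm reg}(\lambda,\nu;\hbar)
= F\left(\hat\lambda + \frac{\hbar}{2}; \hbar\right)
- F\left(\hat\lambda - \frac{\hbar}{2}; \hbar\right),
\end{equation}
since the regularization absorbs the $F_0$ correction terms automatically (as explained in Remark \ref{rem:regularization}). I would then expand each free energy $F(\hat\lambda \pm \hbar/2; \hbar) = \sum_{g} \hbar^{2g-2} F_g(\hat\lambda \pm \hbar/2)$ and substitute the explicit $F_g$. Recalling $\hat\lambda = \lambda - \hbar\nu/2$, the arguments become $\hat\lambda + \hbar/2 = \lambda - \hbar(\nu-1)/2$ and $\hat\lambda - \hbar/2 = \lambda - \hbar(\nu+1)/2$. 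Writing everything in terms of $\lambda$ and collecting powers of $\hbar/\lambda$, the sum over $g$ of the Bernoulli-number coefficients must reorganize so that each fixed power $(\hbar/\lambda)^m$ carries a coefficient expressible through a Bernoulli polynomial evaluated at $(\nu+1)/2$.

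\medskip
The heart of the argument — and the step I expect to be the main obstacle — is the generating-function manipulation that turns $\sum_g \frac{B_{2g}}{2g(2g-2)}\big[ (\text{shift by } 1{-}\nu)^{-(2g-2)} - (\text{shift by } {-}1{-}\nu)^{-(2g-2)} \big]$ into $\sum_m \frac{B_{m+1}((\nu+1)/2)}{m(m+1)} (\hbar/\lambda)^m$. The trick is to recognize that the shift structure, once summed against the Bernoulli-number generating function $w/(e^w-1) = \sum B_n w^n/n!$, produces exactly the Bernoulli-polynomial generating function $we^{Xw}/(e^w-1)$ with $X = (\nu+1)/2$; this is precisely the classical identity $B_m(X) = \sum_{k} \binom{m}{k} B_k X^{m-k}$ rephrased through generating functions. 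Concretely, I would introduce a generating variable and show that $e^{-\hbar\partial_\lambda}(e^{\hbar\partial_\lambda}-1)^{-1}$-type operators (the same operators appearing in the proof of Theorem \ref{cor:weber:FreeEnergy}) applied to $\log\lambda$, when evaluated through \eqref{weber:sol:FreeEnergy}, yield the Bernoulli-polynomial shifts. The bookkeeping of the low-order terms ($m=1$, and the matching of the $g=0,1$ contributions against the regularization) requires care, but once the generating-function identity is set up correctly, the coefficient extraction is forced and the claimed formula follows by comparing coefficients of $(\hbar/\lambda)^m$ on both sides.
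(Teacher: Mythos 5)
Your proposal is correct, and it reaches the formula by a route that differs genuinely from the paper's, although both share the same computational engine (shift operators and Bernoulli generating functions). You take the closed form of the free energy as input --- Theorem \ref{cor:weber:FreeEnergy}, equivalently the series \eqref{weber:sol:FreeEnergy}, whose operator content is $F=\frac{e^{\hbar\partial_\lambda}}{(e^{\hbar\partial_\lambda}-1)^2}\log\lambda$ --- and substitute it into the identity \eqref{eq:Vreg-and-free-energy}, so that everything reduces to the operator computation
\begin{equation*}
V_{\rm reg}
= e^{-\nu\hbar\partial_{\lambda}/2}\bigl(e^{\hbar\partial_{\lambda}/2}-e^{-\hbar\partial_{\lambda}/2}\bigr)
\frac{e^{\hbar\partial_{\lambda}}}{\bigl(e^{\hbar\partial_{\lambda}}-1\bigr)^{2}}\,\log\lambda
= \frac{e^{(1-\nu)\hbar\partial_{\lambda}/2}}{e^{\hbar\partial_{\lambda}}-1}\,\log\lambda ,
\end{equation*}
after which coefficient extraction (using $(\hbar\partial_\lambda)^{m}\log\lambda=(-1)^{m-1}(m-1)!\,(\hbar/\lambda)^{m}$) and the reflection $B_{m}(X)=(-1)^{m}B_{m}(1-X)$ give exactly the claimed series, with the $g=0,1$ terms reproducing $\hbar^{-1}V_{-1}+V_{0}$ as you anticipated. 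The paper, by contrast, never invokes the explicit $F_g$: it derives from the three-term relation \eqref{eq:Weber_free-energy_difference-eq.} the difference equation \eqref{eq:diffrence-eq-for-Vreg} for $V_{\rm reg}$, exhibits $\hat V_{\rm reg}$ as one explicit solution, and then must kill the kernel ambiguity (a formal power series in $\hbar$ with $\lambda$-independent coefficients) by the homogeneity $V_m(\lambda,\nu)=\lambda^{-m}V_m(1,\nu)$ of Proposition \ref{prop:weber:Sn} (ii). Your route avoids that uniqueness step entirely, because \eqref{eq:Vreg-and-free-energy} is an identity rather than an equation to be solved; note, though, that the homogeneity input is relocated rather than eliminated, since the paper's proof of Theorem \ref{cor:weber:FreeEnergy} (which your argument cites) uses the same trick for $F_g$. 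What the paper's version buys is logical independence from the closed form of $F$; what yours buys is a shorter, purely mechanical derivation, whose only delicate points are the low-order bookkeeping and the sign/reflection step needed to convert $B_{m+1}\bigl((1-\nu)/2\bigr)$ into $B_{m+1}\bigl((\nu+1)/2\bigr)$.
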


\begin{proof}
The relation \eqref{eq:Vreg-and-free-energy} 
between the regularized Voros coefficient 
and the free energy can be written as
\begin{equation}
V_{\rm reg}(\lambda, \nu; \hbar) 
= e^{- \nu\hbar\partial_{\lambda}/2}
\Big(
e^{\hbar\partial_{\lambda}/2} - e^{-\hbar\partial_{\lambda}/2}\Big) F(\lambda; \hbar)
\end{equation}
by the shift operators. 
Using the three term relation \eqref{eq:Weber_free-energy_difference-eq.} of $F$, 
we have
\begin{equation} \label{eq:diffrence-eq-for-Vreg}
e^{\nu\hbar\partial_{\lambda}/2}
\Big(
e^{\hbar\partial_{\lambda}/2} - e^{-\hbar\partial_{\lambda}/2}\Big)
V_{\rm reg}(\lambda, \nu; \hbar)
= \Big(
e^{\hbar\partial_{\lambda}/2} - e^{-\hbar\partial_{\lambda}/2}\Big)^2 F(\lambda; \hbar) 
= \log \lambda.
\end{equation}

Let us invert the shift operator 
$e^{\nu\hbar\partial_{\lambda}/2}
\left(e^{\hbar\partial_{\lambda}/2} - e^{-\hbar\partial_{\lambda}/2}\right)$
(or solving the difference equation) 
to obtain an expression of $V_{\rm reg}$. 
For the purpose, we use a similar technique used in the previous subsection. 
Namely, it follows from 
\begin{equation}
e^{- (X-\frac{1}{2}) w} (e^{w/2}-e^{-w/2}) \left(\frac{1}{w} 
+ \sum_{m=0}^{\infty} \frac{B_{m+1}(X)}{m+1} \frac{w^m}{m!} \right)  = 1
\end{equation}
(cf.\,\eqref{def:BernoulliPoly}) that 
\begin{equation}
e^{- (X-\frac{1}{2}) \hbar \partial_\lambda} 
(e^{\hbar \partial_\lambda/2}-e^{-\hbar \partial_\lambda/2}) 
\left( (\hbar \partial_\lambda)^{-1} 
+ \sum_{m=0}^{\infty} \frac{B_{m+1}(X)}{m+1} 
\frac{(\hbar \partial_\lambda)^m}{m!} \right)  = {\rm id}
\end{equation}
holds. The last equality with $X = (1-\nu)/2$ shows that the formal series
\begin{align} \label{eq:expression-Vreg}
\hat{V}_{\rm reg} 
& = 
\hbar^{-1} \bigl( \lambda\log \lambda - \lambda \bigr)
- \frac{\nu}{2} \log \lambda 
+ \sum_{m=1}^{\infty} \frac{B_{m+1}\bigl( (1-\nu)/2 \bigr)}{m+1} 
\frac{(\hbar \partial_\lambda)^m \log \lambda}{m!} \\
\notag 
& = \hbar^{-1} V_{-1} + V_0 + 
\sum_{m=1}^{\infty} \frac{B_{m+1}\bigl( (1+\nu)/2 \bigr)}{m(m+1)} 
\left(\frac{\hbar}{\lambda}\right)^m
\end{align}
satisfies the difference equation \eqref{eq:diffrence-eq-for-Vreg}.
Here we used $B_1(X) = X - 1/2$ and the equality 
$B_m(X) = (-1)^{m}B_m(1-X)$.

Note that the expression \eqref{eq:expression-Vreg} 
may differ from $V_{\rm reg}$ by a formal power series
of $\hbar$ with constant coefficients in an additive manner 
due to the ambiguity in solutions of difference equations. 
However, such difference are in fact shown to be zero
by a similar argument as that given in the end of the proof
of Theorem \ref{cor:weber:FreeEnergy} by noting the homogeneity property
\begin{equation}
V_m(\lambda, \nu)
:= \int_{\gamma}S_m(x, \lambda, \nu) dx
= \lambda^{-m} V_m(1, \nu),
\end{equation}
which follows from Proposition \ref{prop:weber:Sn} (ii).
Thus we have $V_{\rm reg} = \hat{V}_{\rm reg}$, 
which proves Theorem \ref{cor:weber:voros}.
\end{proof}

\appendix


\section{Meromorphic multidifferentials}
\label{sec:multidifferential}

The correlation function $W_{g, n}(z_1, z_2, \cdots, z_n)$
is a meromorphic multidifferential\,,
i.e., a meromorphic section of the line bundle
$\pi_1^{*}(T^*\mathbb{P}^1) \otimes \pi_2^{*}(T^*\mathbb{P}^1) \otimes
\cdots \otimes \pi_n^{*}(T^*\mathbb{P}^1)$ on $(\mathbb{P}^1)^n$,
where $\pi_j: (\mathbb{P}^1)^n \rightarrow \mathbb{P}^1$ denotes
the $j$-th projection (\cite{DN16}).
Thus a multidifferential is a meromorphic differential in $\mathbb{P}^1$
for each variable. If all of the residue with respect to each variable
vanish, then we call it a multidifferential of the second kind.
We summarize here some notations on multidifferential
which we  use in this paper.
\medskip

{\bf{Local coordinate representation.}}\quad
In a local coordinate, we express
a meromorphic multidifferential $\Omega$ on $(\mathbb{P}^1)^n$ as
\begin{equation}
\Omega = \Omega (z_1, z_2, \cdots, z_n) = f(z_1, z_2, \cdots, z_n) \, 
dz_1 dz_2 \cdots dz_n,
\end{equation}
where $f$ is a meromorphic function 
(we omit the tensor product $\otimes$ in its expression).
If $\Omega$ is symmetric under the permutation of variables, i.e.,
\begin{equation}
\Omega (z_1, \cdots, z_j , \cdots , z_k , \cdots z_n)
= \Omega (z_1, \cdots, z_k , \cdots , z_j , \cdots z_n)
\end{equation}
for any $j, k \in \{1, 2, \cdots, n\}$,
$\Omega$ is said to be a symmetric multidifferential.
\medskip

{\bf{Integration.}}\quad
Its integral with respect to $j$-th variable is denoted by
\begin{equation}
\int_{z_j = a}^{z_j = b}
\Omega(z_1, z_2, \cdots, z_n)
:=
\left(\int_a^b f(z_1, z_2, \cdots, z_n) \, dz_j \right)
dz_1 \cdots dz_{j-1} dz_{j + 1} \cdots dz_n
\end{equation}
or
\begin{equation}
\int_{z_j \in \gamma}
\Omega(z_1, z_2, \cdots, z_n)
:=
\left(\int_{\gamma} f(z_1, z_2, \cdots, z_n) \, dz_j \right)
dz_1 \cdots dz_{j-1} dz_{j + 1} \cdots dz_n
\end{equation}
for an integration path $\gamma$ in $\mathbb{P}^1$.
If $\Omega$ is symmetric under the permutation of the variables,
we write a multiple integral with a same integration contour $\gamma$
like
\begin{equation}
\label{eq:multidiff-tmp1}
\underbrace{\int_{\gamma} \cdots \int_{\gamma}}_{\text{$n$-th}}
 \Omega(z_1, \cdots, z_n)
:=
\int_{\gamma} dz_1 \cdots \int_{\gamma} dz_n \, f(z_1, \cdots, z_n).
\end{equation}
Further we sometimes drop off the word ``$n$-th'' if it is clear
from the context.
\medskip

{\bf{Integration with a divisor.}}\quad
Let $\omega$ be a meromorphic differential on some domain $U$ in
$\mathbb{P}^1$ of the second kind.
Following \cite{BE}, for a divisor
$D(z; \underline{\nu}) = [z] - \sum_{k = 1}^m \nu_k [p_k]$,
where
$z, p_1, p_2, \cdots, p_m \in \mathbb{P}^1 \setminus \{\text{poles of $\omega$}\}$,
and $\underline{\nu} = (\nu_k)_{k=1}^{m}$ being a tuple of 
complex numbers satisfying $\sum_{k = 1}^m \nu_k = 1$, 
we define the integral of $\omega$ with $D(z; \underline{\nu})$ by
\begin{equation}
\int_{D(z; \nu)} \omega := \sum_{k = 1}^m \nu_k \int^z_{p_k} \omega.
\end{equation}
Because $\omega$ is of the second kind, this integral does not
depend on the choice of paths in $U$ from $p_j$ to $z$ ($j = 1, 2, \cdots, m$).
For a multidifferential $\Omega = \Omega (z_1, z_2, \cdots, z_n)$ of the second kind,
we define
\begin{equation}
\int_{z_j \in D(z; \underline{\nu})} 
\Omega(z_1, \cdots, z_j, \cdots z_n)
:=
\sum_{k = 1}^m \nu_k \int^{z_j = z}_{z_j = p_k}
\Omega(z_1, \cdots, z_j, \cdots, z_n).
\end{equation}
As in \eqref{eq:multidiff-tmp1}, we write the multiple integral as
\begin{equation}
\underbrace{
\int_{D(z; \underline{\nu})} 
\cdots \int_{D(z; \underline{\nu})}
}_{\text{$n$-th}}
 \Omega(z_1, \cdots, z_n)
:=
\int_{z_1 \in D(z; \underline{\nu})} dz_1 \cdots 
\int_{z_n \in D(z; \underline{\nu})} dz_n \, f(z_1, \cdots, z_n)
\end{equation}
for a symmetric meromorphic multidifferential of the second kind.
\medskip

{\bf{Pull-back.}}\quad
Finally, for a holomorphic map $\phi$ from some domain in $\mathbb{P}^1$
to $\mathbb{P}^1$,
we write the pullback of $\Omega$ by $\phi$ with respect to the $j$-th variable
as
$$
\Omega(z_1, \cdots, \phi (z_j), \cdots, z_n)
:= f(z_1, \cdots, \phi(z_j), \cdots, z_n)
dz_1 \cdots d\phi(z_j) \cdots dz_n.
$$
We frequently use this expression mainly when  $\phi$ is
conjugate map defined near a ramification point.


\section{Ineffectiveness of ramification points}
\label{sec:miscTR}

Following \cite{BE},
we define $R$ as a set of ramification points of $x(z)$,
not as a set of zeros of $dx(z)$ as in \cite{EO}. 
However, this modification does not cause difference 
when the ramification points are ineffective 
(in the sense of Definition \ref{def:effective-ramification}). 
Here we give a criterion for the ineffectiveness of ramification points 
(cf. Proposition \ref{prop:ineffective}).

\begin{prop} \label{prop:ineffective-ramification}
For a ramification point $r$, the followings are equivalent: 
\begin{itemize}
\item[\rm (a)]
the correlation function $W_{g,n}(z_1, \cdots, z_n)$  
with $(g,n) \ne (1,0)$ is holomorphic at $z_i = r$ for each $i = 1,\cdots, n$ 
(i.e., $r$ is an ineffective ramification point).
\item[\rm (b)]
The differential $(y(z) - y(\overline{z})) dx(z)$ has a pole at $r$.
\end{itemize}
\end{prop}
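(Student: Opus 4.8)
The plan is to extract everything from the local behaviour of the recursion kernel $K_r(z_0,z)$ near $z=r$ and to induct on $2g+n$ through the topological recursion \eqref{eq:TR}. Writing $\Delta(z)=y(z)-y(\overline{z})$ for the locally defined difference near $r$, set $d:=\ord_r\bigl[\Delta(z)\,dx(z)\bigr]$, so that condition (b) is exactly $d<0$. First I would record two local facts. Since $\overline{z}=r-(z-r)+\cdots$, the difference $z-\overline{z}$ has a simple zero at $r$, hence the numerator $\int^{z}_{\overline{z}}B(z_0,\zeta)=\bigl(\tfrac{1}{z_0-z}-\tfrac{1}{z_0-\overline{z}}\bigr)\,dz_0$ vanishes to first order in $z$ at $r$; and by definition $\Delta(z)\,dx(z)$ has order $d$. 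Therefore, writing $K_r(z_0,z)=\kappa(z_0,z)\,dz_0/dz$, the coefficient $\kappa$ has $z$-order $1-d$ at $r$.

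For (b)$\Rightarrow$(a) I would assume $d<0$ and prove, by induction on $2g+n$, that every $W_{g,n}$ with $2g+n\ge 3$ is holomorphic at $r$ and that the residue at $z=r$ in \eqref{eq:TR} vanishes. The crucial point is that, under the inductive hypothesis and Theorem \ref{thm:TRprop}(ii), the bracket in \eqref{eq:TR} is holomorphic in $z$ at $z=r$ in every case \emph{except} $(g,n+1)=(1,1)$: there the bracket equals $W_{0,2}(z,\overline{z})$ and has a double pole (order $-2$), whereas for all other indices the term $W_{g-1,n+2}(z,\overline{z},\dots)$ is jointly holomorphic as $z,\overline{z}\to r$ and every factor in the product terms is holomorphic at $r$ (no $W_{0,1}$ occurs, and each $W_{0,2}$-factor is evaluated off its diagonal). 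A direct order count gives integrand order $-1-d\ge 0$ in the $(1,1)$ case and order $\ge 1-d\ge 2$ otherwise, so $\Res_{z=r}$ vanishes. Hence $r$ never contributes, and since the $z_0$-dependence of each surviving term enters only through $K_{r'}(z_0,z)$ with $r'\ne r$, the poles of $W_{g,n+1}$ in $z_0$ lie only at the contributing points $r'\ne r$; symmetry of $W_{g,n+1}$ then gives holomorphicity in every variable at $r$. This closes the induction and proves (a).

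For (a)$\Rightarrow$(b) I would prove the contrapositive: if $d\ge 0$ then $r$ is effective, and it is enough to produce a pole of $W_{1,1}$ at $r$. Taking $(g,n+1)=(1,1)$ in the raw recursion \eqref{eq:TR} (summed over all of $R$) and isolating the $z=r$ residue, the non-kernel factor $W_{0,2}(z,\overline{z})/\bigl(2\Delta(z)\,dx(z)\bigr)$ has a pole of order $d+2$ in $z$ at $r$, and pairing it with the $z_0$-carrying numerator and taking $\Res_{z=r}$ yields a pole of $W_{1,1}(z_0)$ at $z_0=r$; the terms for the other ramification points produce $z_0$-poles only away from $r$, so they cannot cancel it. The leading coefficient is a quotient of the nonzero leading coefficients of $W_{0,2}(z,\overline{z})$ and of $\Delta(z)\,dx(z)$, hence nonzero, so $W_{1,1}$ is genuinely singular at $r$ and $r$ is effective. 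I emphasise that this uses only the definition \eqref{eq:TR}, not the closed formula of Proposition \ref{prop:recursion-01} (whose proof itself invokes ineffectiveness), so no circularity arises.

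The main obstacle is the bookkeeping in the inductive step: one must verify rigorously that the \emph{only} genuine singularity of the bracket at $z=r$ is the $W_{0,2}(z,\overline{z})$ term arising for $(1,1)$. This requires joint holomorphicity of $W_{g-1,n+2}$ at the diagonal point $z=\overline{z}=r$ (not merely holomorphicity in each slot separately) together with the observation that each $W_{0,2}$-factor in the product terms is evaluated at $(z,z_j)$ or $(\overline{z},z_j)$ with $z_j\ne r$, hence holomorphic there. A secondary technical point is to keep the order count honest through the factor $1/dx(z)$ in $K_r$, i.e.\ to track the single power of $dz$ correctly; this is precisely what produces the shift $\kappa\sim(z-r)^{1-d}$. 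Finally, the case $r=\infty$ is handled by passing to the coordinate $X=1/x$, under which the notions of pole, zero, residue and holomorphicity, and hence the integer $d$ and the entire computation, are preserved, so the same argument applies verbatim.
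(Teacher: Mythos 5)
Your proposal is correct, and it takes a genuinely different --- and at one point more robust --- route than the paper's Appendix B. For (b)$\Rightarrow$(a) the paper deforms the residue contour around $r$ into a large contour plus contributions at $z_0,\overline{z_0}$ (cf.\ \eqref{eq:deform-W03}, \eqref{eq:deform-W11}, \eqref{eq:deform-Wgn}), treats $W_{0,3}$ and $W_{1,1}$ as explicit base cases, and must invoke Lemma \ref{lem:even-is-not-ramification} (no simple pole of $\Delta\,dx$) so that the double pole of $B(z_0,\overline{z_0})$ gets absorbed; the residue-vanishing statement (Proposition \ref{prop:ineffective-ramification-2}) is then proved separately afterwards. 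Your order count --- kernel coefficient of order $1-d$, bracket of order $-2$ in the $(1,1)$ case and $\geq 0$ otherwise --- shows that the integrand of \eqref{eq:TR} is already holomorphic at $z=r$ once $d\le -1$, so the residue at $r$ vanishes outright; this yields holomorphicity and Proposition \ref{prop:ineffective} (ii) in one stroke, with no contour surgery and no appeal to Lemma \ref{lem:even-is-not-ramification}. For (a)$\Rightarrow$(b) the choice of correlator matters, and yours is the right one: the paper argues that holomorphy of $W_{0,3}$ at $r$ forces a pole of $\Delta\,dx$, but this overlooks the case $\ord_r\bigl[\Delta(z)\,dx(z)\bigr]=0$, in which the right-hand side of \eqref{eq:W03-singular} \emph{is} holomorphic at $z_0=r$; this case actually occurs, e.g.\ for the Bessel curve $x(z)=z^2$, $y(z)=1/z$ at $r=0$, where $\Delta\,dx=4\,dz$ and $W_{0,3}\equiv 0$, yet $r$ is effective because $W_{1,1}(z_0)=-\,dz_0/(16\,z_0^2)$. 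Your contrapositive through $W_{1,1}$ closes exactly this case, since $B(z,\overline{z})$ has a forced double pole, so holomorphy of $W_{1,1}$ at $r$ genuinely requires $\ord_r\bigl[\Delta\,dx\bigr]\le -2$. Two details you should still nail down: (i) the joint holomorphy of $W_{g-1,n+2}$ at $(r,r,z_1,\dots)$ that you flag does follow from Theorem \ref{thm:TRprop} (ii) together with your inductive hypothesis --- for $2g'+n'\ge 3$ the polar divisor of $W_{g',n'}$ lies in $\bigcup_i\{z_i\in R\}$, and the induction removes the components $\{z_i=r\}$ (the paper's claim that $F_{g,n}$ has ``at most a double pole'' rests on the same fact); (ii) your ``quotient of nonzero leading coefficients'' assertion for the $z_0$-pole of $W_{1,1}$ is automatic only for $d=0$; for $d\ge 2$ the leading coefficient carries the alternating sum $\sum_{j=0}^{d}(-1)^{j}$, which is nonzero precisely because $d$ is even (anti-invariance of $\Delta\,dx$ under the local involution forces even order at $r$) --- alternatively, you can bypass this computation by quoting the deformation identity \eqref{eq:deform-W11}, which exhibits the $r$-contribution to $W_{1,1}$ as a term holomorphic at $z_0=r$ plus $B(z_0,\overline{z_0})/\bigl(\Delta(z_0)\,dx(z_0)\bigr)$, manifestly singular whenever $d\ge 0$.
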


\begin{proof}
First let us give a remark on the pole order of correlation functions 
at a ramification point satisfying the above condition. 
\begin{lem} \label{lem:even-is-not-ramification}
If $(y(z) - y(\overline{z})) dx(z)$ has a pole at a ramification point $r$, 
then the pole order of $(y(z) - y(\overline{z})) dx(z)$ at $z=r$ 
is greater than or equal to two. 
\end{lem}
\begin{proof}[Proof of Lemma \ref{lem:even-is-not-ramification}]
It is enough to prove that $(y(z) - y(\overline{z})) dx(z)$ never has 
a simple pole at ramification point. 
In other words,  it suffices to prove that, there is no $r \in R$ satisfying $\rho(x(r); P) = -2$
(cf. Proposition \ref{prop:order-of-ydx}). 

Suppose for contradiction that a point $r \in R$ satisfies $\rho(x(r); P) = -2$. 

We also assume that $x(r) \ne \infty$ for simplicity. 
(The case $x(r) = \infty$ can be treated by a similar way.) 
Then, the function $Q_0(x)$ defined by \eqref{eq:Q0-in-section-3} has an expression 
\[
Q_0(x) = \frac{c_0}{(x - x(r))^2} (1 + f(x)),
\]
where $c_0$ is a nonzero constant, and 
$f(x)$ is a rational function of $x$ which vanishes at $x=x(r)$. 
Taking a square root with an appropriate branch, we obtain
\[
y(z) - y(\overline{z}) = \frac{2\sqrt{c_0}}{x(z) - x(r)} (1 + f(x(z)))^{1/2}.
\] 
Since the left hand side is anti-invariant under the involution $z \mapsto \overline{z}$, 
the above equality and the relation $x(z) = x(\overline{z})$ imply 
\[
\frac{1}{x(z) - x(r)} = - \frac{1}{x(z) - x(r)}
\]
which leads a contradiction. This proves that no $r \in R$ satisfying $\rho(x(r); P) = -2$. 
\end{proof}
\begin{rem}
By a similar argument presented in the proof of 
Lemma \ref{lem:even-is-not-ramification}, 
we can also show that, there is no $r \in R$ satisfying 
$\rho(x(r); P) = -2m$ for some $m \ge 1$.
\end{rem}

Now let us prove Proposition \ref{prop:ineffective-ramification}. 

Let us assume (b), that is, $(y(z) - y(\overline{z})) dx(z)$ has a pole at $r$, 
and look at the behavior of $W_{0,3}(z_0,z_1,z_2)$ and 
$W_{1,1}(z_0)$ when $z_0$ approaches to $r$. 
By deforming the residue contour around $r$, we can decompose 
the contribution of residue at $z=r$ to $W_{0,3}(z_0,z_1,z_2)$ as 
\begin{multline} \label{eq:deform-W03}
\Res_{z=r} K_r(z_0,z) \bigl( B(z,z_1)B(\overline{z},z_2) 
+ B(\overline{z},z_1)B(z,z_2) \bigr) \\
= \frac{1}{2 \pi i} \left( \oint_{z \in C_{r,z_0, \overline{z_0}}} 
- \oint_{z \in C_{z_0}} -  \oint_{z \in C_{\overline{z_0}}}   \right)
K_r(z_0,z) \bigl( B(z,z_1)B(\overline{z},z_2) + B(\overline{z},z_1)B(z,z_2) \bigr).
\end{multline}
Here $C_{r,z_0, \overline{z_0}}$ is a contour satisfying 
\[
\{ \text{the domain bounded by $C_{r,z_0, \overline{z_0}}$} \} \cap 
R \cap \{z_0, \overline{z_0}, \cdots, z_n , \overline{z_n}  \} 
= \{r, z_0, \overline{z_0} \},
\]
and the other contours $C_{z_0}$ and $C_{\overline{z_0}}$ are defined similarly.
Then, the first integral in the right hand-side of \eqref{eq:deform-W03} 
is holomorphic at $z_0 = r$, while the other two integrals are evaluated as 
\begin{multline} \label{eq:W03-singular}
- \frac{1}{2 \pi i} \left(\oint_{z \in C_{z_0}} + \oint_{z \in C_{\overline{z_0}}} \right) 
K_r(z_0,z) \bigl( B(z,z_1)B(\overline{z},z_2) + B(\overline{z},z_1)B(z,z_2) \bigr) \\
 = \frac{1}{(y(z_0) - y(\overline{z_0})) dx(z_0)} 
\bigl( B(z_0,z_1)B(\overline{z_0},z_2) + B(\overline{z_0},z_1)B(z_0,z_2) \bigr). 
\end{multline}
Therefore $W_{0,3}(z_0,z_1,z_2)$ is holomorphic at $z_0 = r$ 
(and hence, it is holomorphic at $z_i = r$ for $i=1,2$ as well) 
under the assumption on the pole property of $( y(z) - y(\overline{z}) )dx(z)$ at $r$. 

On the other hand, the behavior of $W_{1,1}(z_0)$ when $z_0$ approaches to $r$ 
is described in a similar manner: 
\begin{equation} \label{eq:deform-W11}
\Res_{z=r} K_r(z_0,z) B(z, \overline{z}) =
\frac{1}{2 \pi i} \oint_{z \in C_{r,z_0, \overline{z_0}}} K_r(z_0,z) B(z, \overline{z})
+ \frac{1}{(y(z_0) - y(\overline{z_0})) dx(z_0)}  B(z_0, \overline{z_0}).
\end{equation}
Then we can conclude that $W_{1,1}(z_0)$ is holomorphic at $z_0 = r$
because $(y(z_0) - y(\overline{z_0})) dx(z_0)$ has a double or higher order pole at $r$
(cf. Lemma \ref{lem:even-is-not-ramification})
while $B(z_0,\overline{z_0})$ has a double pole there.

Using the induction on $2g-2+n$, we can conclude that the 
correlation functions $W_{g,n}$ are holomorphic at $r$ 
with respect to each variable $z_i$, as follows. 
For general $(g,n)$, the contribution of the residue at $z=r$ 
to $W_{g,n+1}(z_0, z_1, \cdots, z_n)$ is given by the following form
\begin{multline} \label{eq:deform-Wgn}
\Res_{z=r} K_r(z_0,z) F_{g,n}(z,\overline{z},z_1,\cdots,z_n) \\
= \frac{1}{2 \pi i} \left( \oint_{z \in C_{r,z_0, \overline{z_0}}} 
- \oint_{z \in C_{z_0}} -  \oint_{z \in C_{\overline{z_0}}}   \right)
K_r(z_0,z) F_{g,n}(z,\overline{z},z_1,\cdots,z_n).
\end{multline}
Although we omit an explicit expression of $F_{g,n}(z,\overline{z},z_1,\cdots,z_n)$ 
(which can be read off from \eqref{eq:TR}), 
we know it has at most double pole at $z=r$ under the induction hypothesis. 
Then, by the similar argument for $W_{1,1}(z_0)$ presented above, 
we can verify that the right hand-side of \eqref{eq:deform-Wgn} is holomorphic at $z_0 = r$. 
Thus we have verified that $r$ is ineffective. 

Conversely, let us assume (a). 
Then, it follows from the definition of ineffectiveness that the correlation functions 
$W_{0,3}(z_0,z_1,z_2)$ must be holomorphic at $z_0 = r$. 
Then, in view of \eqref{eq:W03-singular}, we can conclude that the differential 
$(y(z_0) - y(\overline{z_0})) dx(z_0)$ must have a pole at $z_0 = r$ 
(otherwise the right-hand side of \eqref{eq:W03-singular} never becomes 
holomorphic at $z_0 = r$). 

Thus we have proved the equivalence between the conditions (a) and (b). 
This also completes the proof of Proposition \ref{prop:ineffective} (i).
\end{proof}

The remaining task for a proof of Proposition \ref{prop:ineffective} is to show 
\begin{prop} \label{prop:ineffective-ramification-2}
If $r$ is an ineffective ramification point, 
then the residue at $r$ in \eqref{eq:TR} becomes zero.
\end{prop}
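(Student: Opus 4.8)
The plan is to show that the differential in $z$ whose residue at $r$ constitutes the quantity in \eqref{eq:TR}, namely $K_r(z_0,z)\,F_{g,n}(z,\overline z,z_1,\dots,z_n)$ where $F_{g,n}(z,\overline z,z_1,\dots,z_n)$ denotes the bracketed expression on the right-hand side of \eqref{eq:TR} (as in \eqref{eq:deform-Wgn}), is in fact \emph{holomorphic} at $z=r$; its residue then vanishes automatically. I would carry this out by estimating, in a local coordinate $t=z-r$, the order in $t$ of each of the factors making up the integrand, using the hypothesis of ineffectiveness only through Proposition \ref{prop:ineffective-ramification} and Lemma \ref{lem:even-is-not-ramification}.

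First I would treat the recursion kernel \eqref{eq:RecursionKernel}. Using $B(z_0,\zeta)=dz_0\,d\zeta/(z_0-\zeta)^2$ one computes $\int_{\zeta=\overline z}^{\zeta=z}B(z_0,\zeta)=\dfrac{z-\overline z}{(z_0-z)(z_0-\overline z)}\,dz_0$, and the expansions \eqref{eq:zbar1}--\eqref{eq:zbar2} give $z-\overline z=2t+O(t^2)$, so this factor vanishes to first order at $t=0$. By the assumption that $r$ is ineffective together with Proposition \ref{prop:ineffective-ramification}, the differential $\Delta(z)\,dx(z)=(y(z)-y(\overline z))\,dx(z)$ has a pole at $r$, and by Lemma \ref{lem:even-is-not-ramification} its order $m$ satisfies $m\ge 2$; hence the prefactor $1/\bigl(2\,\Delta(z)\,dx(z)\bigr)$ of $K_r$ supplies a zero of order $m$. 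Writing $K_r(z_0,z)=\kappa(z_0,z)\,dz_0\,(dz)^{-1}$, the coefficient $\kappa(z_0,z)$ therefore vanishes to order $m+1\ge 3$ at $t=0$ (for $z_0\ne r$).

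Next I would bound the bracket $F_{g,n}$. Since $r$ is ineffective, by Definition \ref{def:effective-ramification} every $W_{g',n'}$ with $(g',n')\ne(0,1)$ is holomorphic when one of its arguments tends to $r$ (the others being fixed away from $r$); combined with separate holomorphicity this makes each summand of $F_{g,n}$ holomorphic at $z=r$ as a quadratic differential in $z$, after substituting $d\overline z=\tfrac{d\overline z}{dz}\,dz$ with $\tfrac{d\overline z}{dz}=-1+O(t)$. The sole exception is the term $W_{0,2}(z,\overline z)=B(z,\overline z)=dz\,d\overline z/(z-\overline z)^2$, which occurs only in the recursion producing $W_{1,1}$; there $z-\overline z=O(t)$ forces a double pole, so that $F_{0,0}=O(t^{-2})$. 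Multiplying by the kernel, the coefficient of $dz$ in $K_r\,F_{g,n}$ is $O(t^{\,m+1})$ in the generic case and $O(t^{\,m-1})$ in the exceptional $W_{1,1}$ case; since $m\ge 2$ both are holomorphic (indeed vanishing) at $t=0$, whence the residue is zero. The main obstacle, and the only place demanding care, is precisely this exceptional $B(z,\overline z)$ term, whose double pole competes with the kernel's zeros; the argument works exactly because the bound $m\ge 2$ of Lemma \ref{lem:even-is-not-ramification} guarantees that the order-$(m+1)$ zero of $\kappa$ still dominates. Once holomorphicity of the integrand at $z=r$ is established in both cases, the residue in \eqref{eq:TR} vanishes, proving the proposition and thereby completing the proof of Proposition \ref{prop:ineffective} (ii).
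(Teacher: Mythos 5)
Your proposal is correct and follows essentially the same route as the paper's proof: both show that the integrand $K_r(z_0,z)\,F_{g,n}(z,\overline z,z_1,\dots,z_n)$ is holomorphic at $z=r$ by weighing the zero of the kernel (coming from the vanishing of $\int_{\overline z}^{z}B(z_0,\zeta)$ together with the pole of $(y(z)-y(\overline z))\,dx(z)$ guaranteed by Proposition \ref{prop:ineffective-ramification}) against the at-most-double pole of the bracket, whose only singular contribution is the $B(z,\overline z)$ term in the $W_{1,1}$ step. Your version is merely more quantitative (you invoke Lemma \ref{lem:even-is-not-ramification} to get the sharper zero order $m+1\ge 3$, where the paper's weaker count of a double zero already suffices), and note the bracket in the exceptional case should be indexed $F_{1,0}$, not $F_{0,0}$, in the paper's notation.
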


\begin{proof}
Since $\int^{\zeta=z}_{\zeta=\overline{z}}B(z_0,\zeta)$
is holomorphic and vanishes at $z=r$, we can verify that 
$K_r(z_0,z)$ has a double (or more higher order) zero at $z = r$ 
if $r$ is ineffective (cf. Proposition \ref{prop:ineffective-ramification}).
Therefore, $K_r(z_0,z) F_{g,n}(z,\overline{z},z_1,\cdots,z_n)$ 
in \eqref{eq:deform-Wgn} is holomorphic and has no residue at $z = r$. 
This completes the proof.
\end{proof}



\begin{thebibliography}{BHLMR}
%
%

\bibitem[AIT]{AIT}
Aoki,~T., Iwaki,~K. and Takahashi,~T.,
Exact WKB analysis of Schr{\"o}dinger equations with
a Stokes curve of loop type,
to appear in Funkcialaj-Ekvacioj.


\bibitem[AKT1]{AKT91}
Aoki,~T., Kawai,~T. and Takei,~Y.,
The Bender-Wu analysis and the Voros theory,
{\em{ICM-90 Satellite Conference Proceedings}}, Springer, 1991, pp. 1--29.


\bibitem[AKT2]{AKT09}
Aoki,~T., Kawai,~T. and Takei,~Y.,
The Bender-Wu analysis and the Voros theory, II,
{\it Adv. Stud. Pure Math}. {\bf 54} (2009),
Math. Soc. Japan, Tokyo, 2009, pp. 19--94.


\bibitem[ATT]{ATT}
Aoki,~T., Takahashi,~T. and Tanda,~M.,
Exact WKB analysis of confluent hypergeometric differential equations
with a large parameter,
{\it RIMS K\^oky\^uroku Bessatsu}, {\bf B52} (2014), 165--174.


\bibitem[AT]{Aoki-Tanda}
Aoki,~T. and Tanda,~M.,
Parametric Stokes phenomena of 
the Gauss hypergeometric differential
equation with a large parameter,
{\it J. Math. Soc. Japan}, {\bf 68} (2016), 1099--1132.


\bibitem[BCD]{BCD}
Bouchard,~V., Chidambaram,~N. and Dauphinee,~T.,
Quantizing Weierstrass,
preprint; arXiv:1610.00225.


\bibitem[BE1]{BE-12}
Bouchard,~V. and Eynard,~B.,
Think globally, compute locally,
{\it J. High Energy Phys}., (2013).


\bibitem[BE2]{BE}
Bouchard,~V. and Eyanard,~B.,
Reconstructing WKB from topological recursion,
{\it Journal de l'Ecole polytechnique -- Mathematiques},
{\bf 4} (2017), pp. 845--908.


\bibitem[BHLMR]{BHLMR}
Bouchard,~V., Hutchinson,~J., Loliencar,~P., 
Meiers,~M. and Rupert,~M.,
A generalized topological recursion for arbitrary ramification,
{\it Ann. Henri Poincar\'e}, {\bf 15} (2014), 143--169.


\bibitem[CEO]{CEO}
Chekhov,~L., Eynard,~B. and Orantin,~N.,
Free energy topological expansion for the 2-matrix model,
{\it JHEP12} (2006), 053.


\bibitem[DDP1]{DDP93}
Delabaere,~E., Dillinger,~H. and Pham,~F.,
R\'esurgence de Voros et p\'eriodes des courves hyperelliptique,
{\it Annales de l'Institut Fourier}, {\bf 43} (1993), 163--199.



\bibitem[DN]{DN16}
Do,~N. and Norbury,~P.,
Topological recursion on the Bessel curve,
preprint; arXiv:1608.02781.


\bibitem[DM]{DM}
Dumitrescu,~O. and Mulase,~M.,
Quantum curves for Hitchin fibrations and the Eynard-Orantin theory, 
{\it Letters in Mathematical Physics}, {\ bf 104} (2014), 635--671.





\bibitem[E]{Eyanrd-11}
Eynard,~B., Intersection numbers of spectral curves,
preprint, arXiv:1104.0176 (2011).


\bibitem[EO1]{EO}
Eynard,~B. and  Orantin,~N.,
Invariants of algebraic curves and topological expansion,
{\it Communications in Number Theory and Physics},
{\bf 1} (2007), pp. 347--452;
arXiv:math-ph/0702045.




\bibitem[EO2]{EO-08}
Eynard,~B. and  Orantin,~N.,
Topological recursion in enumerative geometry and random matrices,
{\it J. Phys. A: Math. Theor}. {\bf 42} (2009), 293001 (117pp).


\bibitem[EO3]{EO-13}
Eynard,~B. and  Orantin,~N.,
About the $x$-$y$ symmetry of the $F_g$ algebraic invariants,
preprint, arXiv:1311.4993 (2013).

\bibitem[FIMS]{FIMS}
Fuji,~H., Iwaki,~K., Manabe,~M. and Satake,~I., 
Reconstructing GKZ via topological recursion, 
preprint, arXiv:1708.09365 (2017). 


\bibitem[GS]{GS}
Gukov,~S. and Su{\l}kowski,~P.,
A-polynomial, B-model, and quantization,
{\it JHEP}, {\bf 2012} (2012), 70.


\bibitem[HZ]{HZ}
Harer,~J. and Zagier,~D.,
The Euler characteristic of the moduli space of curves,
{\it Invent. Math.}, {\bf 85} (1986), 457--485.





\bibitem[I]{I14} 
Iwaki,~K.
Parametric Stokes phenomenon for the second
Painlev\'e equation with a large parameter,
{\it Funkcialaj Ekvacioj}, {\bf 57} (2014), 173--243.


\bibitem[IKo]{IK} Iwaki,~K. and Koike,~T.,
On the computation of Voros coefficients via middle convolutions,
{\it K\^oky\^uroku Bessatsu}, {\bf B52} (2014), 55--70.


\bibitem[IKoT]{IKT-part2} 
Iwaki,~K., Koike,~T., and Takei,~Y.-M.,
Voros Coefficients for the Hypergeometric Differential Equations and Eynard-Orantin's Topological Recursion, 
Part II : For the Confluent Family of Hypergeometric Equations,
preprint; arXiv:1810.02946.



\bibitem[IM]{IM}
Iwaki,~K. and Marchal,~O.,
Painlev\'e 2 Equation with Arbitrary Monodromy Parameter,
Topological Recursion and Determinantal Formulas,
{\it Annales Henri Poincar\'e}, {\bf 18} (2017), 2581--2620.


\bibitem[IMS]{IMS}
Iwaki,~K., Marchal,~O. and Saenz,~A.,
Painlev\'e equations, topological type property and reconstruction
by the topological recursion,
{\it Journal of Geometry and Physics}, {\bf 124} (2018), 16--54.

\bibitem[IN]{IN14} Iwaki,~K. and Nakanishi.~T.,
Exact WKB analysis and cluster algebras,
{\it J. Phys. A: Math. Theor}. {\bf 47} (2014), 474009.


\bibitem[IS]{IS}
Iwaki,~K. and Saenz,~A.,
Quantum curve and the first Painlev\'e equation,
{\it SIGMA}, {\bf 12} (2016), 011, 24 pages.


\bibitem[KKKoT]{KKKT11}
Kamimoto,~S., Kawai,~T., Koike,~T. and Takei,~Y.,
On the WKB-theoretic structure of a Schr\"odinger operator 
with a merging pair of a simple pole and a simple turning point,
Kyoto J. Math., {\bf{50}} (2010), 101 -- 164.


\bibitem[KKT]{KKT14}
Kamimoto,~S., Kawai,~T. and Takei,~Y.,
Exact WKB analysis of a Schr\"{o}dinger equation
with a merging triplet of two simple poles and one simple
turning point. I \& II,
Adv. Math., {\bf{260}} (2014), 458--564 \& 565--613.

\bibitem[KT]{KT98}
Kawai,~T. and Takei.~Y.,
{\it Algebraic Analysis of Singular Perturbation Theory},
American Mathematical Society,
Translations of Mathematical Monographs,vol. 227. 2005
(originally publieshed in Japanese in 1998).


\bibitem[Ko1]{Ko1}
Koike,~T.,
On a regular singular point in the exact WKB analysis,
{\it Toward the Exact WKB Analysis of Differential Equations,
Linear or Non-Linear},
Kyoto University Press, 2000, pp. 39 -- 54.

\bibitem[Ko2]{Ko2}
Koike,~T.,
On the exact WKB analysis of second order
linear ordinary differential equations with
simple poles,
Publ. RIMS, Kyoto Univ., {\bf{36}} (2000), 297 -- 319.

\bibitem[Ko3]{Ko2000}
Koike,~T.,
On ``new'' turning points associated
with regular singular points in the
exact WKB analysis, RIMS K\^{o}ky\^{u}roku,
{\bf 1159}, RIMS, 2000, pp.100--110.





\bibitem[KoT]{KoT11}
Koike,~T. and Takei,~Y.,
On the Voros coefficient for the Whittaker equation with a large parameter
-- Some progress around Sato's conjecture in exact WKB analysis,
{\it Publ. RIMS}, Kyoto Univ. {\bf 47} (2011), pp. 375--395.


\bibitem[KK1]{KK}
Kokotov,~A. and Korotkin,~D.,
Bergmann tau-function on Hurwitz spaces and its applications,
preprint, math-ph/0310008 (2003).

%
%


\bibitem[KK2]{KK09}
Kokotov,~A. and Korotkin,~D.,
Tau-functions on spaces of abelian differentials and
higher genus generalizations of ray-singer formula,
{\it{J. Differentiaal Geometry}}, {\bf{82}}, 35 -- 100, 2009.


\bibitem[Kon]{Kontsevich}
Kontsevich,~M.,
Intersection theory on the moduli space of curves 
and the matrix Airy function,
{\it Comm. Math. Phys.,} {\bf 147} (1992), 1--23.


\bibitem[MS]{MS}
Mulase.~M. and Su\l kowski.~P.,
Spectral curves and the Schr\"odinger equations 
for the Eynard-Orantin recursion,
Adv. Theor. Math. Phys., {\bf 19} (2015), 955--1015.




\bibitem[P]{Penner}
Penner,~R.C.,
Perturbative series and the moduli space of Riemann surfaces,
{\it J. Diff. Geom}., {\bf 27} (1988), 35--53.





\bibitem[SWP]{SWP}
Sendra,~J, Winkler,~F, P\'er\'ez-Diaz,~S,
{\it{Rational algebraic curves - A computer algebra approach}},
Alogorithms and Computatoin in Mathematics {\bf{22}}, Springer, 2008.


\bibitem[SS]{SS}
Shen,~H. and Silverstone,~H. J.,
Observations on the JWKB treatment of the quadratic barrier,
in {\it Algebraic analysis of differential equations from
microlocal analysis to exponential asymptotics},
Springer, 2008, pp. 237--250.




\bibitem[T]{Takei08} Takei,~Y.,
Sato's conjecture for the Weber equation and
transformation theory for Schr{\"o}dinger equations
with a merging pair of turning points,
{\it RIMS K\^okyur\^oku Bessatsu}, {\bf B10} (2008), pp. 205--224.

\bibitem[Ta]{Takei17}
Takei,~Y.-M.,
Exact WKB analysis and the topological recursion,
Master thesis (Kobe Univ., in Japanese), 2017.


\bibitem[V1]{Voros83} Voros,~A.,
The return of the quartic oscillator
-- The complex WKB method,
{\it Ann. Inst. Henri Poincar\'e}, {\bf 39} (1983), pp. 211--338. 

\bibitem[V2]{Voros-zeta} Voros,~A.,
Zeta-regularisation for exact-WKB resolution
of a general $1D$ Schr\"{o}dinger equation, 
{\it J. Phys. A: Math. Theor}. {\bf 45} (2012), 374007.





\bibitem[Z]{Zhou12} Zhou,~J.,
Intersection numbers on Deligne-Mumford moduli spaces and quantum Airy curve,
preprint, arXiv:1206.5896 (2012).

\end{thebibliography}
\end{document}